\documentclass[12pt]{amsart}

\usepackage[english]{babel}
\usepackage{amsmath,amssymb,amsfonts,amsthm,amsopn,amstext,amsxtra,amscd}
\usepackage{bm,mathrsfs,mathtools}
\usepackage{cite}
\usepackage{graphicx}
\usepackage{float}
\usepackage{xcolor}
\usepackage{url}

\usepackage[colorlinks,linkcolor=blue,anchorcolor=blue,citecolor=blue,backref=page]{hyperref}
\hypersetup{breaklinks=true}

\usepackage[norefs,nocites]{refcheck}  

\def\({\left(}
\def\){\right)}

\newtheorem{thm}{Theorem}
\newtheorem{theorem}[thm]{Theorem}
\newtheorem{lem}[thm]{Lemma}
\newtheorem{lemma}[thm]{Lemma}

\newtheorem{cor}[thm]{Corollary}

\numberwithin{equation}{section}
\numberwithin{thm}{section}
\numberwithin{table}{section}

\usepackage{todonotes}
\DeclareMathOperator{\supp}{supp}

\DeclareMathOperator{\SL}{SL}

\DeclareMathOperator{\vol}{vol}

\newcommand{\jac}[2]{\(\frac{#1}{#2}\)}

\newcommand{\mand}{\qquad\text{and}\qquad}

\newcommand{\Rform}{\mathcal R}
\newcommand{\distZ}[1]{\left\|#1\right\|_{\mathbb Z}}
\newcommand{\one}{\mathbf 1}

\newcommand{\balpha}{\bm{\alpha}}

\newcommand{\bxi}{\boldsymbol{\xi}}

\newcommand{\bell}{\boldsymbol{\ell}}

\newcommand{\cC}{\mathcal C}
\newcommand{\cD}{\mathcal D}
\newcommand{\cE}{\mathcal E}

\newcommand{\cG}{\mathcal G}

\newcommand{\cK}{\mathcal K}

\newcommand{\cM}{\mathcal M}

\newcommand{\cS}{\mathcal S}
\newcommand{\cT}{\mathcal T}

\newcommand{\cW}{\mathcal W}

\newcommand{\fS}{\mathfrak S}

\newcommand{\fs}{\mathfrak s}

\newcommand{\C}{\mathbb C}

\newcommand{\Q}{\mathbb Q}
\newcommand{\R}{\mathbb R}
\newcommand{\Z}{\mathbb Z}

\newcommand{\e}{\mathbf e}

\renewcommand{\vec}[1]{\mathbf #1}

\newcommand{\va}{\vec{a}}
\newcommand{\vb}{\vec{b}}
\newcommand{\vc}{\vec{c}}
\newcommand{\vn}{\vec{n}}

\newcommand{\vt}{\vec{t}}
\newcommand{\vu}{\vec{u}}
\newcommand{\vv}{\vec{v}}
\newcommand{\vx}{\vec{x}}
\newcommand{\vy}{\vec{y}}
\newcommand{\vz}{\vec{z}}

\title{Arithmetic structure of $L_2$-norms of $\mathrm{SL}_2(\Z)$ matrices}

\author{Igor E. Shparlinski}
\address{School of Mathematics and Statistics, University of New South Wales, Sydney NSW 2052, Australia}
\email{igor.shparlinski@unsw.edu.au}
\author{Yixiu Xiao}
\address{School of Mathematical Sciences, Shanghai Jiao Tong University, 800 Dongchuan RD, 200240 Shanghai, China}
\email{yixiuxiao98@gmail.com}

\begin{document}
 
\begin{abstract}
For a matrix $\gamma\in\SL_2(\Z)$, we define 
\[
\Rform(\gamma)=a_1^2+a_2^2+a_3^2+a_4^2,
\qquad \text{where} \ 
\gamma=\begin{pmatrix}a_1&a_2\\ a_3&a_4\end{pmatrix},  
\]
and let $S_{\mathrm{sq}}(X)$ count the number of matrices $\gamma$ with 
$\|\gamma\|_\infty = \max\{|a_1|, |a_2|,|a_3|,|a_4|\} \leq X$ and such that $\Rform(\gamma)$ is squarefree.  We prove that
\[
S_{\mathrm{sq}}(X)
=
\fS_{\Rform}^{\mathrm{sq}}N(X)
+O(X^{19/10+o(1)}), \quad \text{as}\  X\to \infty,
\]
where
$N(X)=\#\{\gamma\in\SL_2(\Z):\|\gamma\|_\infty\leq X\}$ and
$\mathfrak S_{\Rform}^{\mathrm{sq}}$ is an explicit positive Euler product of
local $p^2$-densities.  
The proof combines the $\delta$-method for small moduli
 with a sum-of-two-squares estimate for large square divisors.
This complements a result of J.~B.~Friedlander and H.~Iwaniec (2009) on prime values of 
$\Rform(\gamma)$, which, however, is conditional on a very strong form of the Elliott--Halberstam conjecture.
We also show that $\Rform(\gamma)$ is squarefree and has at most $9$ prime divisors for at least $cN(X)/\log X$ matrices
$\gamma\in\SL_2(\Z)$ with $\|\gamma\|_\infty\le X$, where $c>0$ is an absolute constant.
\end{abstract}

\subjclass[2020]{Primary 11N32; Secondary 11P55, 11F06, 11P21, 11E25.}
\keywords{squarefree values, special linear group, quadratic forms, delta method, local densities}

\maketitle
\tableofcontents

\section{Introduction}
\subsection{Motivation and set-up} 
Let  
\[
\Gamma=\SL_2(\Z)
\]
and, for
\[
\gamma=\begin{pmatrix}a_1&a_2\\ a_3&a_4\end{pmatrix}\in\Gamma,
\]
put
\[
\Rform(\gamma)=a_1^2+a_2^2+a_3^2+a_4^2 \mand \|\gamma\|_\infty
=
\max\{|a_1|,|a_2|,|a_3|,|a_4|\}.
\]

For $X\ge1$, define
\begin{equation}
\label{eq:Gamma X}
\Gamma_X=\{\gamma\in\Gamma:~\|\gamma\|_\infty\le X\}
\mand
N(X)=\#\Gamma_X.
\end{equation}

By a classical result of Krieg~\cite{Krieg94}, we have
\begin{equation}
\label{eq:Krieg}
    N(X) = \frac{96}{\pi^2} X^2 + O(X \log X).
\end{equation}

Our work is motivated by a result of Friedlander and
Iwaniec~\cite[Theorem~2]{FrIw09}, conditional on a  strong form of the
Elliott--Halberstam conjecture, which gives matching upper and lower bounds for the number of matrices $\gamma\in\Gamma_X$
for which $\Rform(\gamma)$ is prime.  We
instead obtain an unconditional asymptotic formula for the number of matrices $\gamma\in\Gamma_X$ for which
$\Rform(\gamma)$ is squarefree.  
Thus we study
the counting function
\[
S_{\mathrm{sq}}(X)=
\#\{\gamma\in\Gamma_X:~\Rform(\gamma)\ \text{squarefree}\}.
\]
We note that Sedunova~\cite{Sed} gives such an asymptotic formula for counting squarefree values 
of $\Rform(\gamma)$ when matrices $\gamma$ are ordered by the $L^2$-norm, that is, for 
$\#\{\gamma\in\Gamma:~\Rform(\gamma) \le Y, \ \Rform(\gamma)\ \text{squarefree}\}$.The methods differ quite substantially:
Sedunova~\cite{Sed} uses  a modification of the argument of Friedlander and
Iwaniec~\cite{FrIw09}, whereas we employ the $\delta$-method of  Heath-Brown~\cite{HeathBrown96} in the form 
given by Str\"ombergsson,  S\"odergren and  Vishe~\cite{StrSoVi26}.  Sedunova~\cite{Sed}
gets a better level of distribution in her model of counting, while our approach is more general and
allows us to treat $\SL_2(\Z)$  matrices whose entries lie in translated rectangular boxes.
Furthermore, combining our distribution estimates of $\Rform(\gamma)$ with the weighted 
linear sieve in the form presented by  Greaves~\cite{Greaves01}, we show unconditionally that $\Rform(\gamma)$ is squarefree and has at most nine prime factors for at least $c N(X)/\log X$ matrices $\gamma\in\Gamma_X$, where $c>0$ is an absolute constant.

\subsection{Main results}
To state our first result, we set
\[
\beta_{2^2}=0,
\]
and, for an odd prime $p$, define
\begin{equation}
\label{eq:beta-p}
\beta_{p^2}=
\frac{p^2+1-2p\jac{-1}{p}}{p^2(p^2-1)}, 
\end{equation}
where $\jac{\cdot}{p}$ denotes the Legendre symbol.
See also Lemma~\ref{lem:local-square-density}  and the equation~\eqref{eq:beta-m-def} below
for a general definition of $\beta_m$.

Since $\beta_{p^2}=p^{-2}+O(p^{-3})$, the following product converges
absolutely:
\begin{equation}
\label{eq:sq-singular-product-intro}
\mathfrak S_{\Rform}^{\mathrm{sq}}
=
\prod_p(1-\beta_{p^2}).
\end{equation}
Every factor in~\eqref{eq:sq-singular-product-intro} is positive, so absolute
convergence also gives $\mathfrak S_{\Rform}^{\mathrm{sq}}>0$.

\begin{theorem}
\label{thm:main-squarefree}
As $X\to\infty$,
\[
S_{\mathrm{sq}}(X)
=
\mathfrak S_{\Rform}^{\mathrm{sq}}N(X)
+O(X^{19/10+o(1)}).
\]
\end{theorem}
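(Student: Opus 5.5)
We start from the Möbius identity $\mu^2(n)=\sum_{d^2\mid n}\mu(d)$ and write
\[
S_{\mathrm{sq}}(X)=\sum_{d\ge1}\mu(d)\,A_{d^2}(X),\qquad
A_{m}(X)=\#\{\gamma\in\Gamma_X:~m\mid \Rform(\gamma)\}.
\]
Since $\Rform(\gamma)\le 4X^2$, only $d\le 2X$ occur. We choose a parameter $D$ (eventually $D=X^{1/10}$ or so) and split the sum into $d\le D$ and $D<d\le 2X$.

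For small moduli we need an asymptotic formula
\[
A_{m}(X)=\beta_m N(X)+O\bigl(m^{\kappa}X^{2-\eta+o(1)}\bigr)
\]
with $\beta_m$ the local density of solutions of $a_1a_4-a_2a_3=1$, $\Rform\equiv0 \pmod m$. This is the counting problem that the $\delta$-method of Heath-Brown, in the form of Strömbergsson--Södergren--Vishe, handles. We detect the determinant equation with the $\delta$-symbol, split $\gamma$ into residue classes modulo $m$ satisfying the two congruence conditions, and apply Poisson summation over the lattice $m\Z^4$ shifted by a class. The zero frequency gives $\beta_m N(X)$: the singular integral reproduces $N(X)$ and the singular series factors through $\beta_m$, multiplicative in $m$ with $\beta_{p^2}$ as in~\eqref{eq:beta-p}, computed via Lemma~\ref{lem:local-square-density}. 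The nonzero frequencies give quadratic-form exponential sums modulo $qm$, bounded by Kloosterman/Gauss-sum estimates. Summing over $d\le D$ then gives
\[
N(X)\sum_{d\le D}\mu(d)\beta_{d^2}+O\bigl(D^{2\kappa+1}X^{2-\eta+o(1)}\bigr).
\]
Completing the series costs $N(X)\sum_{d>D}\beta_{d^2}\ll X^2D^{-1}$, using $\beta_{d^2}\ll d^{-2+o(1)}$. The Euler product equals $\mathfrak S_{\Rform}^{\mathrm{sq}}$ because $\beta_{4}=0$: indeed $a_1a_4-a_2a_3=1$ forces $\Rform$ odd or $\equiv2\pmod 4$.

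For large $d$ we drop $\mu(d)$ and bound $\sum_{D<d\le 2X}A_{d^2}(X)$ directly. We use the identity
\[
\Rform(\gamma)+2=(a_1+a_4)^2+(a_2-a_3)^2,
\]
and also $\Rform(\gamma)-2=(a_1-a_4)^2+(a_2+a_3)^2$. Write $\Rform(\gamma)=d^2k$ with $k\le 4X^2/d^2$, so $d^2k+2$ is a sum of two squares $u^2+v^2$ with $|u|,|v|\le 2X$. Conversely, given $(u,v)$, the matrix $\gamma$ is determined up to few choices by $(u,v)$ together with $(a_1-a_4,a_2+a_3)$, where the latter pair satisfies $s^2+t^2=u^2+v^2-4$. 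Hence
\[
\sum_{d>D}A_{d^2}\ll X^{o(1)}\#\{(d,k,u,v):~u^2+v^2=d^2k+2\}.
\]
We count these tuples in two ways. First, fix $d,k$ and use $r_2(n)\ll n^{o(1)}$, which gives $X^{o(1)}\sum_d X^2/d^2\ll X^{2+o(1)}/D$ per pair. That alone is too weak: the second representation also contributes $n^{o(1)}$, so we gain only if the total over $d$ is small. So instead we count pairs $(u,v)$ with $u^2+v^2\equiv 2\pmod{d^2}$. For fixed $d$ there are $\ll d^{o(1)}(X^2/d^2\cdot d+X)$ of them, by counting $v$ in a box and then $u$ in $\ll d^{o(1)}$ residue classes modulo $d^2$ (square roots of $2-v^2$). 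Summing this over $D<d\le 2X$ is also too large, so for $d$ near $X$ we switch to the first bound. The optimization of these regimes is where the exponent $19/10$ will come from.

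The main obstacle is the large-$d$ range $d\in[X^{1-\epsilon},X]$, where the trivial count fails. Here I would use that $k\le 4X^2/d^2$ is small: fix $k$ and count $d\le 2X/\sqrt k$ with $d^2k+2=u^2+v^2$ and additionally $d^2k-2=s^2+t^2$. This is a bound for points on a fixed variety, via a Pell-type or divisor argument, giving roughly $X^{1+o(1)}/\sqrt k$ per $k$. Balancing this with the $\delta$-method error $D^{O(1)}X^{2-\eta}$ fixes $D$ and yields the error $X^{19/10+o(1)}$.
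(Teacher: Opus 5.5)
Your skeleton (M\"obius inversion, a split at $D=X^{1/10}$, the $\delta$-method for $d\le D$, a fibre bound for $d>D$) matches the paper's. However, the proposal goes wrong in the tail and never actually derives the exponent.

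In the tail you wrote down exactly the argument the paper uses, and then discarded it. Your identities for $\Rform(\gamma)\pm 2$ show that $\gamma\mapsto(a_1+a_4,a_2-a_3,a_1-a_4,a_2+a_3)$ is injective. Hence $\#\{\gamma\in\Gamma:~\Rform(\gamma)=n\}\le r_2(n+2)r_2(n-2)\le n^{o(1)}$, since two factors $n^{o(1)}$ still give $n^{o(1)}$. It follows that
\[
\sum_{d>D}A_{d^2}(X)\le\sum_{d>D}\ \sum_{1\le k\le 4X^2/d^2}\#\{\gamma\in\Gamma:~\Rform(\gamma)=kd^2\}\le X^{o(1)}\sum_{d>D}\frac{4X^2}{d^2}\le D^{-1}X^{2+o(1)}.
\]
With $D=X^{1/10}$ this is $X^{19/10+o(1)}$, so it is not too weak. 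Nothing special happens for $d$ near $X$ either: the number of pairs $(d,k)$ with $d>D$ and $1\le k\le 4X^2/d^2$ is at most $\sum_{d>D}4X^2/d^2$ in every range, with no extra $+1$ per $d$. Your ``main obstacle'' is therefore illusory. The Pell-type bound $X^{1+o(1)}/\sqrt k$ you propose instead is both unneeded and unproved.

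The exponent $19/10$ does not come from optimising tail regimes. It comes from the small-modulus estimate, which you leave with unspecified $\kappa,\eta$. The missing ingredient is the passage from smooth weights, which the $\delta$-method requires, to the sharp box $\|\gamma\|_\infty\le X$; your assertion that the singular integral ``reproduces $N(X)$'' presupposes this step. The paper handles it as follows:
\begin{itemize}
\item It brackets $\one_{[-1,1]^4}$ between product weights $w_{h,\pm}$ that vary only over width $h=H/X$.
\item It proves $A_{w,m}(X)=\beta_mN_w(X)+O(\cW_{w,h}\,mX^{3/2+o(1)})$ with $\cW_{w,h}\ll h^{-4}$. The edge-sensitive oscillatory-integral bound keeps derivatives of $w$ inside the integral, so $\|\partial_i^5w\|_1\ll h^{-4}$ enters rather than $\|\partial_i^5w\|_\infty\ll h^{-5}$. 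Averaging the non-resonant terms over the Fourier modes $\vb/m$ via Parseval costs only $(\#\cS_m)^{1/2}\le m^{1+o(1)}$.
\item It controls the bracketing loss by $\beta_m\,\#\{\gamma:~X-H<\|\gamma\|_\infty\le X+H\}\ll\beta_mXH$.
\end{itemize}
Balancing $mX^{3/2}(X/H)^4$ against $m^{-1}XH$ at $H\asymp m^{2/5}X^{9/10}$ gives $A_m(X)=\beta_mN(X)+O(m^{-3/5}X^{19/10+o(1)})$ for $m\le X^{1/4}$, that is, $d\le X^{1/8}$. Summing $d^{-6/5}$ over $d\le D$ then gives $X^{19/10+o(1)}$. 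The choice $D=X^{1/10}$ is made only so that the tail bound $X^{2+o(1)}/D$ matches this. Without quantifying how your error depends on $m$ and on the cut-off, the exponent $19/10$ is asserted rather than proved.
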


Next we estimate the frequency of almost prime values of $\Rform(\gamma)$.
For a positive integer $m$, let $\omega(m)$ denote the number of distinct prime divisors of $m$ and let $\Omega(m)$ denote the total number of prime factors of $m$, counted with multiplicity.

\begin{theorem}
\label{thm:almost-prime-lower-bound}
As $X\to\infty$, we have
\[
\#\{\gamma\in\Gamma_X:~\omega(\Rform(\gamma))= \Omega(\Rform(\gamma))\leq9\}
\gg \frac{N(X)}{\log X}.
\]
\end{theorem}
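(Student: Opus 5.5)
We prove Theorem~\ref{thm:almost-prime-lower-bound} with a weighted linear sieve, in the form of Greaves~\cite{Greaves01}, applied to the sequence $\mathcal A=(\Rform(\gamma))_{\gamma\in\Gamma_X}$. All values satisfy $\Rform(\gamma)\le 4X^2$. The sieve input is a level-of-distribution statement for squarefree moduli. For each squarefree $d$ we write
\[
\#\{\gamma\in\Gamma_X:~d\mid \Rform(\gamma)\}=\beta_d N(X)+r_d,
\]
where $\beta_d$ is the multiplicative local density from \eqref{eq:beta-m-def}, so that $\beta_p=\rho(p)/p$ with $\rho(p)$ close to $1$. We then need
\[
\sum_{d\le D,\ d \text{ squarefree}}|r_d|\ll N(X)(\log X)^{-A}
\]
with $D=X^{\vartheta}$ for some explicit $\vartheta>0$. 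This is the same $\delta$-method machinery (Heath-Brown~\cite{HeathBrown96}, Str\"ombergsson--S\"odergren--Vishe~\cite{StrSoVi26}) that handles the small moduli in Theorem~\ref{thm:main-squarefree}. The difference is that we use it for squarefree moduli $d$ rather than square moduli $p^2$.

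The first step is to check the local conditions. We compute $\beta_p$ from the count of $\SL_2(\Z/p\Z)$ matrices whose quadratic form $a_1^2+a_2^2+a_3^2+a_4^2$ vanishes mod $p$. This count is $(p^2+1-\ldots)$-type, giving $\beta_p=1/p+O(p^{-2})$, so the sieve has dimension $\kappa=1$. We must also verify $\beta_p<1$ for every $p$. The prime $p=2$ needs a separate check. Since $a_1a_4-a_2a_3=1$, the entries are not all even, and in fact $\Rform(\gamma)$ is always even, because $\Rform\equiv a_1+a_2+a_3+a_4 \pmod 2$ and the determinant condition forces this to be even. So we sieve the sequence $\Rform(\gamma)/2$ instead. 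We restrict to the subset where $\Rform(\gamma)\equiv 2\pmod 4$, which has positive density since $\beta_{4}=0$ is nonzero-free; compare $\beta_{2^2}=0$ in the paper, meaning $4\nmid\Rform$ always. After this change the local factor at $2$ is harmless.

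Next we apply Greaves' weighted sieve with $\kappa=1$, sifting level $D=X^{\vartheta}$, and size $\log(4X^2)/\log D=2/\vartheta$. This yields
\[
\sum_{\gamma} w(\gamma)\gg N(X)/\log X
\]
over elements free of prime factors below $D^{1/4}$ or a similar cutoff, where $w(\gamma)>0$ forces $\Omega(\Rform(\gamma)/2)\le r$. Greaves' bound is $r$ with $r>\Lambda_1/\vartheta-\delta_1$, where $\Lambda_1\cdot$ is roughly $2\cdot$ the log-size ratio and $\delta_1\approx0.144$. We aim for $\vartheta$ such that this gives $\Omega\le 8$ for $\Rform/2$, hence $\Omega(\Rform)\le 9$. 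In other words $\vartheta$ only needs to be roughly $1/4$ or a bit smaller. We expect the computation to show that the $\delta$-method level $X^{\vartheta}$ available from the paper's distribution estimates gives exactly $9$.

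Finally we remove non-squarefree values. Elements with a repeated prime factor $p\ge D^{1/4}$ contribute at most $\sum_{p>X^{c}}\#\{\gamma:\ p^2\mid\Rform(\gamma)\}$. For $p^2\le X^{2-\epsilon}$ we bound this by $\beta_{p^2}N(X)$ plus errors, using the small-moduli estimate from the proof of Theorem~\ref{thm:main-squarefree}. For large $p$ we use the sum-of-two-squares estimate from the same proof. Either way the total is $O(X^{2-c'})=o(N(X)/\log X)$. Since the weights are bounded, subtracting this contribution keeps the lower bound $\gg N(X)/\log X$, and the surviving $\gamma$ satisfy $\omega=\Omega\le 9$.

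We expect the main obstacle to be the level of distribution. This requires uniform control of the $\delta$-method error terms for general squarefree $d$, including the singular-series local factors mod $d$. It also requires that the exponent obtained be large enough for Greaves' numerology to reach $9$. A minor related cost is the dyadic bookkeeping of the parity at $2$.
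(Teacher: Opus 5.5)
Your architecture matches the paper's: Greaves's weighted linear sieve on the values $\Rform(\gamma)$, a level of distribution from the $\delta$-method, and removal of repeated primes via the two-squares fibre bound. However, your treatment of the prime $2$ rests on a false claim. $\Rform(\gamma)$ is not always even. For $\gamma=\begin{pmatrix}1&1\\0&1\end{pmatrix}$ one has $\Rform(\gamma)=3$, and Lemma~\ref{lem:local-square-density} gives $\beta_2=1/3$: only the two elements of $\SL_2(\Z/2\Z)$ with two nonzero entries give even $\Rform$. The congruence $\det\gamma\equiv1\bmod 2$ places no constraint on the parity of $a_1+a_2+a_3+a_4$.

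The correct observation is simply that $\beta_2<1$. So $2$ is sifted like every other prime below $z_s$, the surviving values are odd, and nothing is lost at $2$. Your passage to $\Rform/2$ can only be made honest as a restriction to the density-$1/3$ subfamily of even values, with moduli $2d$. That restriction costs a prime factor. The level actually available is $D<X^{1/4}$, so $\mu=\log(\max n)/\log D>8$. To reach $9$ you would then need $\Omega(\Rform/2)\le 8$, which requires Greaves's sharpest weights ($r>\mu-\delta_1$). With the simple logarithmic weights the paper uses, whose criterion is $r>\mu+\log 4/\log 3-1$, your route yields only $\Omega(\Rform)\le 10$.

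You also leave the decisive numerology as an expectation, and you describe the level of distribution as the main obstacle needing new $\delta$-method work for squarefree moduli. It needs none. Lemma~\ref{lem:circle-square-divisor-count} is already stated for every $m\le X^{1/4}$, and summing its error term gives $\sum_{d\le D}|r_d(X)|\ll X^{19/10+o(1)}D^{2/5}$. This is a power saving over $N(X)$ provided $D=Y^{\vartheta}$ with $Y=4X^2$ and $\vartheta<1/8$; the paper then takes $\vartheta=3/25$ and $r=9$.

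Finally, in the square-removal step the small-moduli estimate does not reach $p^2\le X^{2-\varepsilon}$; it only covers $p^2\le X^{1/4}$. You don't need it: Lemma~\ref{lem:large-square-divisor-tail} gives $\sum_{p>z}A_{p^2}(X)\le z^{-1}X^{2+o(1)}$ for all $p>z=Y^{\vartheta/4}$ at once. This is negligible against $N(X)/\log X$.
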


We note that Sedunova~\cite{Sed} gives a stronger result, with $9$ replaced by $7$. 
However, our previous comment about the flexibility of our approach still applies, so we believe 
 Theorem~\ref{thm:almost-prime-lower-bound} is indicative of what one can achieve by our 
 method in other related scenarios.

\subsection{General notation and conventions}
\label{sec:not}
We recall that  the notations $U = O(V)$, $U \ll V$ and $ V\gg U$  
are equivalent to $|U|\leqslant c V$ for some positive constant $c$, 
which throughout this work may depend on some parameters which we 
declare to be ``fixed''.   For example, in Lemma~\ref{lem:delta-exp} the constant 
may depend on the parameter $K$,  while in Lemma~\ref{lem:T approx} 
it may depend on $B$.

Furthermore,  $U \asymp V$  means that  $U \ll V \ll U$.

We also write $U = V^{o(1)}$ if for any fixed $\varepsilon>0$  we have 
$V^{-\varepsilon} \le |U |\le V^{\varepsilon}$ provided that $V$ is large 
enough.  

For a set $\cS$, we use $\one_{\cS}$ to denote the characteristic function of $\cS$
and, if $\cS$ is finite, we use  $\# \cS$ to denote its cardinality. 

 We use $\|\vy\|$ for  the Euclidean norm of $\vy \in \R^4$ and set
\[
\distZ{\vy}
=
\min_{\vn \in\Z^4}\|\vy-\vn\|.
\]
As usual, we set
\[
\e(t)=\exp(2\pi i t),
\qquad
\e_m(t)=\e(t/m).
\]

For an integer $m \ge 1$, we assume that the residue ring $\Z/m\Z$ is represented 
by the set $\{0, 1, \ldots, m-1\}$. In particular, we switch freely between elements 
of $\Z/m\Z$ and integers (and also apply the same convention to vectors).

We use $\tau$, $\mu$ and $\varphi$ to denote the divisor-counting function, the Möbius function and Euler's totient function, respectively.

We frequently use the classical bound 
\begin{equation}
\label{eq: tau}
\tau(q) = q^{o(1)};  
\end{equation}
see, for example,~\cite[Equation~(1.81)]{IwKow04}. 

The letter $p$ always denotes a prime number.

For integers $\nu \ge 1$ and $m \ne 0$, as usual,
$p^\nu \parallel m$ means $p^\nu \mid m$ and $p^{\nu+1} \nmid m$.

We often use the dot-product $\vb \cdot g$ between vectors 
$\vb = (b_1, \ldots, b_4) \in \R^4$ and $2\times 2$-matrices
\[
g = \begin{pmatrix}a_1&a_2\\ a_3&a_4\end{pmatrix},
\]
in which case we identify $g$ with a vector in $\R^4$  and thus 
 $\vb \cdot g = a_1b_1 + \ldots + a_4b_4$. 
 
We say that a function $\psi$ is \emph{smooth} if $\psi\in C^\infty$,
that is, if it has continuous derivatives of all orders.

Let $ \cD^\infty(\Omega)$ denote the set of smooth compactly supported functions on 
a certain domain $\Omega \subseteq \R^n$.

\subsection{Outline of the approach}  
We write $Q(\va)=a_1a_4-a_2a_3-1$ for $\va\in\Z^4$.
For $w \in C_c^\infty(\mathbb{R}^4)$,   define
\begin{equation}
\label{eq:A_dw}
A_{w,m}(X)
=
\sum_{\substack{\va\in\Z^4\\Q(\va)=0\\m\mid\Rform(\va)}}
 w(\va/X), 
\end{equation}
and
\begin{equation}
\label{eq:Nw}
N_w(X)=
\sum_{\substack{\va\in\Z^4\\Q(\va)=0}}
 w(\va/X).
\end{equation}

We first approximate $A_{w,m}(X)$ by $\beta_m N_w(X)$; 
see Lemma~\ref{lem:SmoothApprox} below, where $\beta_{m}$ is defined by~\eqref{eq:beta-m-def}. We use Heath-Brown's $\delta$-symbol method~\cite{HeathBrown96} in the form of Str\"ombergsson,  S\"odergren and  Vishe~\cite{StrSoVi26}; see 
Sections~\ref{sec; anal est},  \ref{sec:non-resonant} and~\ref{sec:resonant}. 
The resulting error bound in  Lemma~\ref{lem:SmoothApprox} depends explicitly on $m$ and the derivatives of $w$. 

We choose smooth product weights $w_{h,-}$ and $w_{h,+}$
 that bracket the characteristic function of $[-1,1]^4$. Then, applying the approximation of Lemma~\ref{lem:SmoothApprox}
 to $A_{w_{h,\pm},m}(X)$   yields an approximation for
\[
A_m(X) = \# \left\{ \gamma \in \Gamma_X :~m \mid \mathcal{R}(\gamma) \right\}
\]
by
$\beta_{m}N(X)$ uniformly for integers $1\leq m\leq X^{1/4}$; see 
Lemma~\ref{lem:circle-square-divisor-count}, where $N(X)$ is defined in~\eqref{eq:Gamma X}.

M\"obius inversion gives
$$
S_{\mathrm{sq}}(X)= \sum_{d\ge 1}
\mu(d)A_{d^2}(X).
$$
We then split the sum at $d = X^{1/10}$. The preceding estimate handles the smaller divisors. For the remaining terms, we apply the bound from Lemma~\ref{lem:fibre-bound} for the number of matrices  $\gamma\in\Gamma$ with a prescribed value of $\Rform(\gamma)$, which is 
obtained by following an argument of Friedlander and Iwaniec~\cite{FrIw09}.
This controls the tail, while the multiplicativity of $\beta_m$ leads to the Euler product~\eqref{eq:sq-singular-product-intro} in the main-term. 

\section{Local densities}

\subsection{Densities modulo prime powers}

We interpret $\beta_{p^2}$, given by the equation~\eqref{eq:beta-p},  as the density of
$g\in\cG_{p^2}$ with $\Rform(g)\equiv0\bmod {p^2}$, where,
for an integer $s\ge1$, we write
\begin{equation}
\label{eq:def_G_m}
    \cG_s=\SL_2(\Z/s\Z).
\end{equation}
Here we investigate similar densities modulo arbitrary positive integers, which may also be 
useful for other applications.

A simple argument, based on the Chinese Remainder Theorem and Hensel lifting, 
yields
\begin{equation}
\label{eq:order_Gm}
\# \cG_s = s^3\prod_{p\mid s}(1-p^{-2});
\end{equation}
see also~\cite[Exercise~1.2.3(b)]{DiSh}.

\begin{lem} 
\label{lem:local-square-density}
For every prime $p$ and every integer $\nu \geq 1$, we have
\[
  \frac{
  \#\left\{g \in \cG_{p^\nu} :
  \Rform(g) \equiv 0 \bmod{p^\nu}\right\}
  }{
  \#\cG_{p^\nu}
  }
  =
  \beta_{p^\nu},
\]
where
\begin{equation*}
    \beta_{p^\nu}
  =
  \begin{cases}
  \displaystyle
  \frac{\left(p-\jac{-1}{p}\right)^2}
       {p^\nu(p^2-1)},
  & p \text{ odd},\\[10pt]
  \displaystyle \frac{1}{3},
  & p=2,\ \nu=1,\\[6pt]
  0,
  & p=2,\ \nu \geq 2.
  \end{cases}
\end{equation*}
\end{lem}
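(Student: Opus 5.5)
The plan for odd $p$ is to diagonalise both the determinant condition and $\Rform$ by one linear change of variables, which splits the count into two independent binary conic counts. The prime $p=2$ is handled separately by parity.

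\textbf{The identity.} For $g=\begin{pmatrix}a_1&a_2\\ a_3&a_4\end{pmatrix}$ we use
\[
\Rform(g)=(a_1+a_4)^2+(a_2-a_3)^2-2\det g=(a_1-a_4)^2+(a_2+a_3)^2+2\det g .
\]

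\textbf{Odd $p$: change of variables.} Since $2$ is a unit modulo $p^\nu$, the map
\[
(a_1,a_2,a_3,a_4)\mapsto(u,v,x,y)=(a_1+a_4,\ a_2-a_3,\ a_1-a_4,\ a_2+a_3)
\]
is a bijection of $(\Z/p^\nu\Z)^4$. In these coordinates $4\det g=u^2+v^2-x^2-y^2$ and $2\Rform(g)=u^2+v^2+x^2+y^2$. Hence the simultaneous conditions $\det g\equiv1$ and $\Rform(g)\equiv0 \bmod{p^\nu}$ are equivalent to the decoupled system
\[
u^2+v^2\equiv 2,\qquad x^2+y^2\equiv -2 \pmod{p^\nu}.
\]

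\textbf{Odd $p$: counting.} It remains to show that, for a unit $c$, the congruence $x^2+y^2\equiv c\bmod{p^\nu}$ has exactly $p^{\nu-1}\bigl(p-\jac{-1}{p}\bigr)$ solutions.
\begin{itemize}
\item Modulo $p$ this is the classical count for a nondegenerate affine conic. When $\jac{-1}{p}=1$, factor $x^2+y^2=(x+iy)(x-iy)$ to get $p-1$ points. When $\jac{-1}{p}=-1$, the points correspond to norm-$c$ elements of $\mathbb F_{p^2}$, giving $p+1$ points.
\item Every solution modulo $p$ has $(x,y)\not\equiv(0,0)$, because $c$ is a unit. So the gradient $(2x,2y)$ is nonzero modulo $p$, and Hensel's lemma shows that each solution modulo $p^k$ lifts to exactly $p$ solutions modulo $p^{k+1}$.
\end{itemize}
The number of admissible $g$ is therefore $p^{2\nu-2}\bigl(p-\jac{-1}{p}\bigr)^2$. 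Dividing by $\#\cG_{p^\nu}=p^{3\nu-2}(p^2-1)$ from~\eqref{eq:order_Gm} gives the claimed formula for $\beta_{p^\nu}$.

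\textbf{The prime $p=2$.} Here the substitution is not invertible, so we argue via parity.
\begin{itemize}
\item Since $a^2\equiv0$ or $1 \bmod 4$ according to the parity of $a$, $\Rform(g)\bmod 4$ equals the number of odd entries of $g$.
\item The reduction of $g$ modulo $2$ lies in $\SL_2(\mathbb F_2)$, whose six elements have either $2$ odd entries (the identity and the antidiagonal matrix) or $3$ odd entries (the other four).
\item Thus $\Rform(g)\equiv2$ or $3\bmod4$, so $\Rform(g)\not\equiv0\bmod 4$. This gives $\beta_{2^\nu}=0$ for every $\nu\ge2$.
\item For $\nu=1$ we have $\Rform(g)\equiv0\bmod2$ exactly when $g$ has two odd entries. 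This happens for $2$ of the $6$ elements, so $\beta_2=1/3$.
\end{itemize}

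\textbf{Main obstacle.} The only step needing care is the conic count: the $\bmod p$ point count in the two cases of $\jac{-1}{p}$, together with the nonsingularity needed for Hensel lifting. Both are standard.
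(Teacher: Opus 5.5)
Your proof is correct and follows the paper's argument essentially step for step. You use the same substitution $u=a_1+a_4$, $v=a_2-a_3$, $x=a_1-a_4$, $y=a_2+a_3$ to decouple the system into $u^2+v^2\equiv 2$ and $x^2+y^2\equiv -2 \bmod p^\nu$, the same Hensel lifting by a factor of $p$ per step, and the same parity argument in $\SL_2(\mathbb F_2)$ for $p=2$. The only difference is that you derive the mod-$p$ count $p-\jac{-1}{p}$ yourself, by factoring $x^2+y^2=(x+iy)(x-iy)$ when $-1$ is a square and using the norm map $\mathbb F_{p^2}^*\to\mathbb F_p^*$ otherwise, whereas the paper cites Lidl--Niederreiter for it.
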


\begin{proof}
We first consider $p=2$. Of the six elements of $\SL_2(\mathbb F_2)$, two have two nonzero entries and the remaining four have three. Since $x^2=x$ in $\mathbb F_2$, it follows that
\[\beta_2=\frac{2}{6}=\frac13.\]
If $\nu\geq2$, reduction modulo $2$ shows that every $g\in\cG_{2^\nu}$ has either two or three odd entries. Since an integer square is congruent to $0$ or $1$ modulo $4$, according as the integer is even or odd, we have
\[\Rform(g)\equiv2\ \text{or}\ 3\bmod4.\]
Thus $2^\nu\nmid\Rform(g)$ and hence $\beta_{2^\nu}=0$.

Let $p$ be odd and let $T_{\pm,\nu}$ denote the number of solutions to
\begin{equation}
\label{eq:mod-pnu}
X^2+Y^2\equiv\pm2\bmod{p^\nu},
\qquad X,Y\in\mathbb Z/p^\nu\mathbb Z.
\end{equation}
For
\[
g=
\begin{pmatrix}
a_1&a_2\\
a_3&a_4
\end{pmatrix},
\]
put
\[
X=a_1+a_4,\qquad Y=a_2-a_3,\qquad
U=a_1-a_4,\qquad V=a_2+a_3.
\]
Then
\[
X^2+Y^2=\Rform(g)+2\det g,
\qquad
U^2+V^2=\Rform(g)-2\det g.
\]
Since $2$ is invertible modulo $p^\nu$, this change of variables is
bijective. Hence the conditions
\[
\det g\equiv1\bmod{p^\nu},
\qquad
\Rform(g)\equiv0\bmod{p^\nu}
\]
are equivalent to
\[
X^2+Y^2\equiv2\bmod{p^\nu},
\qquad
U^2+V^2\equiv-2\bmod{p^\nu}.
\]
Consequently,
\[
\#\left\{
g\in\cG_{p^\nu}:
\Rform(g)\equiv0\bmod{p^\nu}
\right\}
=
T_{+,\nu}T_{-,\nu}.
\]

By~\cite[Lemma~6.24]{LN}, each of the congruences
\[
x^2+y^2\equiv\pm2\bmod p
\]
has
\[
p-\jac{-1}{p}
\]
solutions modulo $p$.

We next count their lifts. Fix $1\leq j<\nu$ and a solution
$(x,y)$ modulo $p^j$. Choosing integer representatives, write
\[
x^2+y^2=\pm2+p^jz
\]
for some $z\in\mathbb Z$. Every lift modulo $p^{j+1}$ is of the form
\[
X=x+up^j,\qquad Y=y+vp^j,
\qquad u,v\in\mathbb Z/p\mathbb Z.
\]
Substitution into~\eqref{eq:mod-pnu}, with $\nu$ replaced by $j+1$ shows that such a lift is a
solution if and only if
\[
2(xu+yv)\equiv-z\bmod p.
\]
Since $(x,y)\not\equiv(0,0)\bmod p$, this is a nontrivial linear
congruence in $u$ and $v$, and therefore has exactly $p$
solutions. Iterating the lifting procedure gives
\[
T_{\pm,\nu}
=
p^{\nu-1}\left(p-\jac{-1}{p}\right).
\]
It follows that
\[
\#\left\{
g\in\cG_{p^\nu}:
\Rform(g)\equiv0\bmod{p^\nu}
\right\}
=
p^{2\nu-2}
\left(p-\jac{-1}{p}\right)^2.
\]
Finally, by~\eqref{eq:order_Gm},
\[
\#\cG_{p^\nu}=p^{3\nu-2}(p^2-1).
\]
Therefore
\[
\frac{
\#\left\{
g\in\cG_{p^\nu}:
\Rform(g)\equiv0\bmod{p^\nu}
\right\}
}{
\#\cG_{p^\nu}
}
=
\frac{
\left(p-\jac{-1}{p}\right)^2
}{
p^\nu(p^2-1)
}.
\]
\end{proof}

In particular, when $\nu=2$, Lemma~\ref{lem:local-square-density} agrees with the definition of $\beta_{p^2}$ in~\eqref{eq:beta-p}.

\subsection{Densities modulo $m$ and the inclusion-exclusion principle}

We first record the corresponding local density for arbitrary integer moduli.

For every integer $m \geq 1$, define
\begin{equation}
\label{eq:beta-m-def}
    \beta_m = \prod_{p^\nu \parallel m} \beta_{p^\nu},
\end{equation}
where the empty product, corresponding to $m=1$, is understood to be~$1$.
The Chinese Remainder Theorem, together with Lemma~\ref{lem:local-square-density}, now yields the following density formula for every modulus $m$.

\begin{cor}
\label{cor:beta_d2}
For every integer $m \ge 1$,
\[
\frac{
\#\left\{g\in\SL_2(\Z/m\Z):~\Rform(g)\equiv0\bmod {m}\right\}
}{
\#\SL_2(\Z/m\Z)
}
= \beta_{m}.
\]
\end{cor}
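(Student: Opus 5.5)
The plan is to reduce to Lemma~\ref{lem:local-square-density} via the Chinese Remainder Theorem. Write $m=\prod_{i=1}^k p_i^{\nu_i}$ with distinct primes $p_i$. The natural reduction map
\[
\Z/m\Z \longrightarrow \prod_{i=1}^k \Z/p_i^{\nu_i}\Z
\]
is a ring isomorphism. Applying it entrywise gives a bijection
\[
\phi\colon \mathrm{M}_2(\Z/m\Z)\longrightarrow \prod_{i=1}^k \mathrm{M}_2(\Z/p_i^{\nu_i}\Z),
\]
which respects the determinant because reduction is a ring homomorphism. Since $u\equiv 1 \bmod m$ if and only if $u\equiv 1\bmod p_i^{\nu_i}$ for every $i$, the map $\phi$ restricts to a bijection $\cG_m\to\prod_i \cG_{p_i^{\nu_i}}$. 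Incidentally, this also recovers the product formula~\eqref{eq:order_Gm}.

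Next we transport the condition on $\Rform$. The quantity $\Rform(g)=a_1^2+a_2^2+a_3^2+a_4^2$ is a polynomial with integer coefficients in the entries. Hence the reduction of $\Rform(g)$ modulo $p_i^{\nu_i}$ equals $\Rform$ of the $i$-th component of $\phi(g)$. Moreover $\Rform(g)\equiv 0\bmod m$ holds exactly when $\Rform(g)\equiv0\bmod p_i^{\nu_i}$ for all $i$. Therefore $\phi$ maps
\[
\{g\in\cG_m:\Rform(g)\equiv 0 \bmod m\}
\]
bijectively onto the product of the sets $\{g_i\in\cG_{p_i^{\nu_i}}:\Rform(g_i)\equiv0\bmod p_i^{\nu_i}\}$.

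Taking cardinalities of both sets, the ratio in question factors as
\[
\prod_{i=1}^k \frac{\#\{g_i\in\cG_{p_i^{\nu_i}}:\Rform(g_i)\equiv0\bmod p_i^{\nu_i}\}}{\#\cG_{p_i^{\nu_i}}}.
\]
By Lemma~\ref{lem:local-square-density}, each factor equals $\beta_{p_i^{\nu_i}}$. By the definition~\eqref{eq:beta-m-def}, the product is therefore $\beta_m$. The case $m=1$ is trivial: both sides equal $1$, since $\cG_1$ has a single element and the congruence modulo $1$ always holds.

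There is no real obstacle here. The only point needing care is that $\phi$ maps $\cG_m$ onto the full product of the groups $\cG_{p_i^{\nu_i}}$, and not merely into it. This follows because $\phi$ is a bijection on all of $\mathrm{M}_2$, and the condition defining $\cG$ splits componentwise.
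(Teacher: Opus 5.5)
Your proposal is correct and follows the same approach as the paper, which simply invokes the Chinese Remainder Theorem together with Lemma~\ref{lem:local-square-density}. You spell out the details the paper leaves implicit: the entrywise CRT bijection respects $\det$ and $\Rform$, restricts to $\cG_m\cong\prod_i\cG_{p_i^{\nu_i}}$, and so factors the ratio into the local densities $\beta_{p_i^{\nu_i}}$.
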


\begin{lemma}
\label{lem:beta-d-estimates}
As $m\to\infty$, we have
\[
    \beta_m \leq m^{-1+o(1)}.
\]
\end{lemma}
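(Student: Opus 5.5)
The bound will follow from the multiplicative definition~\eqref{eq:beta-m-def} and the explicit prime-power values in Lemma~\ref{lem:local-square-density}. The plan is to show that each local factor satisfies $\beta_{p^\nu}\le C\,p^{-\nu}$ with an absolute constant $C$. Taking the product over $p^\nu\parallel m$ then gives $\beta_m\le C^{\omega(m)}m^{-1}$, and it remains to check that $C^{\omega(m)}=m^{o(1)}$.

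First I handle the local factors. For $p=2$ we have $\beta_2=1/3<2^{-1}$ and $\beta_{2^\nu}=0$ for $\nu\ge2$, so the bound holds trivially; if $4\mid m$ then $\beta_m=0$. For odd $p$ we have $p-\jac{-1}{p}\le p+1$, so
\[
\beta_{p^\nu}=\frac{\left(p-\jac{-1}{p}\right)^2}{p^\nu(p^2-1)}\le \frac{(p+1)^2}{p^\nu(p-1)(p+1)}=\frac{p+1}{p-1}\,p^{-\nu}\le 2p^{-\nu},
\]
using $p\ge3$. Hence $C=2$ works, and multiplying over the prime powers exactly dividing $m$ gives
\[
\beta_m\le 2^{\omega(m)}\prod_{p^\nu\parallel m}p^{-\nu}=2^{\omega(m)}m^{-1}.
\]

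Finally, since $2^{\omega(m)}\le\tau(m)$, the divisor bound~\eqref{eq: tau} gives $2^{\omega(m)}=m^{o(1)}$, which yields $\beta_m\le m^{-1+o(1)}$. No step is a real obstacle. The only point needing care is that the constant in the local bound is uniform in $p$, so that the accumulated factor is bounded by a divisor-type function rather than something growing with the size of the primes.
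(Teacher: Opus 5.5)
Your proposal is correct and follows essentially the same route as the paper: you bound each local factor by $\beta_{p^\nu}\le 2p^{-\nu}$, multiply over $p^\nu\parallel m$ to get $\beta_m\le 2^{\omega(m)}m^{-1}$, and use $2^{\omega(m)}\le\tau(m)=m^{o(1)}$. Your explicit check that $(p+1)/(p-1)\le 2$ for odd $p\ge 3$ supplies the verification of the local bound that the paper only asserts.
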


\begin{proof}
Lemma~\ref{lem:local-square-density} gives
\[
    0 \leq \beta_{p^\nu} \leq 2p^{-\nu}
\]
for every prime $p$ and every integer $\nu \geq 1$. Therefore, using the definition~\eqref{eq:beta-m-def}, we obtain
\[
    \beta_m = \prod_{p^\nu \parallel m} \beta_{p^\nu} \leq 2^{\omega(m)} \prod_{p^\nu \parallel m} p^{-\nu} = 2^{\omega(m)}m^{-1},
\]
where $\omega(m)$ denotes the number of distinct prime divisors of $m$. Since
\[
    2^{\omega(m)} \leq \tau(m) = m^{o(1)}
\]
by~\eqref{eq: tau}, the result follows.
\end{proof}

In particular, we see from Lemma~\ref{lem:beta-d-estimates} that  
\[
\sum_{\substack{d>D\\d\ \mathrm{squarefree}}}\beta_{d^2} \le D^{-1+o(1)},
\]
as $D\to \infty$.

The inclusion-exclusion principle implies the expansion
\begin{equation}
\label{eq:incl/excl}
S_{\mathrm{sq}}(X)= \sum_{d\ge 1}
\mu(d)A_{d^2}(X),
\end{equation}
with the M\"obius function $\mu(d)$. Clearly, the sum is finite and terminates for $d \ge 2X$.

We next obtain an asymptotic formula for $A_{d^2}(X)$ when $d$ is small and an upper bound for the remaining values of $d$.

\section{Background on the $\delta$-method}
\label{sec; anal est}

\subsection{Some norm estimates}

For $w\in\cD^\infty(\R^4)$, we write $\partial_i=\partial/\partial u_i$ with the convention that $\partial^0_i w = w$.

For functions defined on a Euclidean space, $\|\cdot\|_1$ and $\|\cdot\|_\infty$ denote the standard $L^1$- and $L^\infty$-norms.

For $0<h\leq1$, we introduce the
{\it edge\/} norm
\begin{equation}
\label{eq:edge-norm}
\begin{split}
\|w\|_{\cE_h^5}
&=
\|w\|_1+\|w\|_\infty\\
&\quad+
\sum_{i=1}^4\sum_{j=1}^5
\(
 h^j\|\partial_i^jw\|_\infty
 +h^{j-1}\|\partial_i^jw\|_1
\).
\end{split}
\end{equation}

The name ``edge'' of the norm~\eqref{eq:edge-norm} is motivated by one-dimensional cutoff functions that vary only within the $h$-neighborhoods of an interval's endpoints and by their tensor products in $\mathbb{R}^4$, which vary only near the boundary of a box.
We note that the definition~\eqref{eq:edge-norm}
involves no mixed partial derivatives with respect to different variables.

As usual, $\supp w$ denotes the support of a function $w$.

\begin{lemma}
\label{lem:adjustable-edge-cutoffs}
For every $0<h\leq1/2$, there are functions
$\psi_h^-,\psi_h^+\in \cD^\infty(\R)$ with 
\[
0\leq\psi_h^-(x) \leq\one_{[-1,1]}(x)\leq\psi_h^+(x) \leq1,
\]
and
\[
\psi_h^-(x)=1\quad (|x|\leq1-h),
\qquad
\supp \psi_h^+\subseteq[-1-h,1+h],
\]
and such that 
\begin{itemize}
\item[(i)]  for every fixed integer $j\geq1$,
\[
\| (\psi_h^\pm)^{(j)}\|_\infty\ll h^{-j} \mand 
\| (\psi_h^\pm)^{(j)}\|_1\ll h^{1-j};
\]

\item[(ii)] for
\[
w_{h,\pm}(\vu)=\prod_{i=1}^4\psi_h^\pm(u_i),
\]
we have 
\[
\|w_{h,\pm}\|_{\cE_h^5}\ll1. 
\]
\end{itemize}
\end{lemma}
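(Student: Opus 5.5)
The plan is to build both cutoffs from one fixed smooth transition function and rescale it to width $h$; part (ii) then follows from part (i) because $w_{h,\pm}$ is a tensor product.

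\textbf{Construction.} Fix once and for all a function $\chi\in C^\infty(\R)$ with $0\le\chi\le1$, with $\chi(t)=0$ for $t\le0$ and $\chi(t)=1$ for $t\ge1$. For example, take the standard $e^{-1/t}$ construction, $\chi(t)=f(t)/(f(t)+f(1-t))$. Then define
\[
\psi_h^-(x)=\chi\!\left(\frac{1-|x|}{h}\right),
\qquad
\psi_h^+(x)=\chi\!\left(\frac{1+h-|x|}{h}\right).
\]
Both are smooth, since each is constant near $x=0$ (this uses $h\le1/2$). We then check the required properties directly:
\begin{itemize}
\item $\psi_h^-$ equals $1$ for $|x|\le1-h$ and vanishes for $|x|\ge1$, so $0\le\psi_h^-\le\one_{[-1,1]}$.
\item $\psi_h^+$ equals $1$ for $|x|\le1$ and vanishes for $|x|\ge1+h$, which gives $\one_{[-1,1]}\le\psi_h^+\le1$ and $\supp\psi_h^+\subseteq[-1-h,1+h]$.
\end{itemize}

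\textbf{Part (i).} By the chain rule, $(\psi_h^\pm)^{(j)}(x)=\pm h^{-j}\chi^{(j)}(\cdot)$ on the transition regions, and it vanishes elsewhere. Hence $\|(\psi_h^\pm)^{(j)}\|_\infty\le h^{-j}\|\chi^{(j)}\|_\infty\ll h^{-j}$. For $j\ge1$ the derivative is supported on two intervals of length $h$, so
\[
\|(\psi_h^\pm)^{(j)}\|_1\le 2h\cdot h^{-j}\|\chi^{(j)}\|_\infty\ll h^{1-j}.
\]
The implied constants depend only on $j$ through the fixed function $\chi$.

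\textbf{Part (ii).} Here we use the product structure. We have $\|w_{h,\pm}\|_\infty\le1$, and $\|w_{h,\pm}\|_1=\|\psi_h^\pm\|_1^4\le 3^4$ because $\psi_h^\pm\le\one_{[-3/2,3/2]}$. Since $\partial_i^j w_{h,\pm}=(\psi_h^\pm)^{(j)}(u_i)\prod_{k\ne i}\psi_h^\pm(u_k)$, the $L^\infty$ and $L^1$ norms factor, giving
\[
\|\partial_i^jw_{h,\pm}\|_\infty\le\|(\psi_h^\pm)^{(j)}\|_\infty\ll h^{-j},
\qquad
\|\partial_i^jw_{h,\pm}\|_1\le 3^3\|(\psi_h^\pm)^{(j)}\|_1\ll h^{1-j}.
\]
Multiplying by $h^j$ and $h^{j-1}$ respectively and summing over $1\le i\le4$ and $1\le j\le5$ gives $\|w_{h,\pm}\|_{\cE_h^5}\ll1$.

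There is no real obstacle here. The only point needing care is that the $L^1$ bound gains a factor $h$ from the length of the transition region, and that smoothness at the origin is automatic because $|x|$ only enters where the functions are locally constant.
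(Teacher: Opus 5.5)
Your proposal is correct and follows essentially the same route as the paper: a fixed smooth step function rescaled to width $h$. You use $\chi((1-|x|)/h)$, with smoothness at $0$ coming from local constancy, where the paper uses the product $\vartheta((x+1)/h)\,\vartheta((1-x)/h)$. In both, the chain rule on two transition intervals of total length $O(h)$ gives (i), and factorization of the tensor product $w_{h,\pm}$ gives (ii).
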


\begin{proof}
Choose a smooth function  $\vartheta\in C^\infty(\R)$ with
$0\leq\vartheta(t) \leq1$, $\vartheta(t)=0$ for $t\leq0$ and
$\vartheta(t)=1$ for $t\geq1$, and set
\begin{align*}
& \psi_h^-(x)
=
\vartheta\(\frac{x+1}{h}\)
\vartheta\(\frac{1-x}{h}\),\\
&
\psi_h^+(x)
=
\vartheta\(\frac{x+1+h}{h}\)
\vartheta\(\frac{1+h-x}{h}\).
\end{align*}
Lower and upper bounds as well as the support and flatness assertions are immediate.  

Next, every nonzero derivative
of either function is supported in the union of two intervals of total
length $O(h)$.  The chain rule therefore gives~(i). 

For the product weights, we have
\[
\|w_{h,\pm}\|_\infty\leq1,
\qquad
\|w_{h,\pm}\|_1\ll1.
\]
Moreover, for $1\leq i\leq4$ and $1\leq j\leq5$,
\[
\partial_i^jw_{h,\pm}(\vu)
=
(\psi_h^\pm)^{(j)}(u_i)
\prod_{\substack{k=1\\k\ne i}}^4\psi_h^\pm(u_k).
\]
Since $\|\psi_h^\pm\|_1\ll1$ and
$\|\psi_h^\pm\|_\infty\leq1$, Part~(i) gives
\[
\|\partial_i^jw_{h,\pm}\|_\infty\ll h^{-j},
\qquad
\|\partial_i^jw_{h,\pm}\|_1\ll h^{1-j}.
\]
Substitution into~\eqref{eq:edge-norm} proves~(ii).
\end{proof}

\subsection{Approximation to the $\delta$-symbol}

For $n \in \Z$,  let $\delta_0 = \vec{1}_{\{0\}}$, that is, 
\[
\delta_0(n)=
\begin{cases}
1,& \text{if} \ n=0,\\
0,&\text{if} \  n\ne0.
\end{cases}
\]

We use  $\Sigma_{c\bmod q}^*$ to denote summation over the reduced residues modulo $q$. 

We need the following representation of the
$\delta$-symbol, which is related to pioneering work of 
Heath-Brown~\cite[Theorem~1]{HeathBrown96}.

\begin{lemma} 
\label{lem:delta-exp} There exists a family of smooth functions
$p_{q, X}:\R\to\C$, indexed by $X\geq1$ and positive integers $q\leq X$,
with the following properties. For every fixed $K\geq1$, we have
\[
p_{q, X}(z)\ll (1+qX|z|)^{-K}, 
\] 
and 
\[
p_{q, X}(z)
=
1+O\(
(1+X^2|z|)^{2K+2}\(\frac qX\)^K
\),
\]
and, for every integer $n$,
\[
\delta_0(n)
=
\sum_{1\leq q\leq X}\  \sideset{}{^*}\sum_{c\bmod q}
\int_{\R}p_{q, X}(z)\e\(\(\frac cq+z\)n\)\,dz
+O(X^{-K}). 
\] 
\end{lemma}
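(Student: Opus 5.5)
This is essentially Heath-Brown's smooth $\delta$-expansion~\cite[Theorem~1]{HeathBrown96} in the normalisation of Str\"ombergsson, S\"odergren and Vishe~\cite{StrSoVi26}. The plan is to rederive it from Heath-Brown's construction. Fix a smooth $\omega\in\cD^\infty(\R)$ supported in $[1/2,1]$ with $\int\omega=1$, and set
\[
h(x,y)=\sum_{j\ge1}\frac{1}{xj}\Bigl(\omega(xj)-\omega\Bigl(\frac{|y|}{xj}\Bigr)\Bigr).
\]
Heath-Brown's identity, obtained by counting divisors $d\mid n$ weighted by $\omega(d/X)-\omega(|n|/(dX))$, states that for every integer $n$,
\[
\delta_0(n)=c_X X^{-1}\sum_{q\ge1}\ \sideset{}{^*}\sum_{c\bmod q}\e\Bigl(\frac{cn}{q}\Bigr)h\Bigl(\frac qX,\frac n{X^2}\Bigr),
\qquad c_X=1+O(X^{-K}),
\]
and $h(x,y)=0$ once $x>\max(1,2|y|)$. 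The scaling $X$ here plays the role of the paper's $X$, so that $q\le X$ when $|n|\ll X^2$.

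Next convert the $n$-dependence into an integral. Truncate $n$ smoothly: for the ranges where the lemma is applied, $|n|\ll X^2$. Rescale if needed so that all relevant $q\le X$, and absorb the truncation error into $O(X^{-K})$. Then write $h(q/X,n/X^2)$ as a Fourier integral in the $n$ variable:
\[
h\Bigl(\frac qX,\frac n{X^2}\Bigr)=\int_\R \widehat{h}_q(z)\,\e(zn)\,dz .
\]
Define
\[
p_{q,X}(z)=c_XX^{-1}\widehat{h}_q(z)\,\kappa_q
\]
with the appropriate normalising factor $\kappa_q$. Inserting this into the identity yields the stated expansion. Swapping the sum and the integral is justified by the absolute convergence supplied by the decay bound.

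The two estimates on $p_{q,X}$ come from Heath-Brown's Lemmas~4 and~5.

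\emph{Decay.} For $x=q/X$ one has $x^{i}\partial_y^{j}h(x,y)\ll x^{-1-j}$. Integrating by parts $K$ times in $y$ therefore gives a Fourier transform bounded by $(1+qX|z|)^{-K}$ after rescaling $y=n/X^2$.

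\emph{Main-term approximation.} Heath-Brown's Lemma~5 states that $\int h(x,y)\e(-ry)\,dy=1+O_K(x^K(1+|r|)^{K})$. Transcribed with $r\asymp X^2 z$, this gives $p_{q,X}(z)=1+O\bigl((1+X^2|z|)^{2K+2}(q/X)^K\bigr)$. The exponent $2K+2$ is a harmless loss that comes from bounding derivatives of the weight.

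The step most likely to cause trouble is the normalisation bookkeeping. One must place the factor $X^{-1}$ and the $\e(zn)$ integral correctly so that $p_{q,X}$ is exactly of size $1$ near $z=0$, while the cutoff $q\le X$ still costs only $O(X^{-K})$. The cleanest route is to quote~\cite[Lemma~2.1]{StrSoVi26} (or \cite[Theorem~1, Lemmas~4--5]{HeathBrown96}) directly, with $Q=X$ in their notation, and to check that their parameters match the conventions used here.
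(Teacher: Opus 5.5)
The paper gives no derivation here. It quotes \cite[Proposition~5.3]{StrSoVi26} for the expansion and the decay bound, and \cite[Lemma~2.2]{MarmonVishe19} for the asymptotic formula, so your closing fallback is the paper's route. Note, though, that the paper takes the asymptotic formula, with its explicit dependence on $z$, from Marmon--Vishe rather than from Heath-Brown or SSV.

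Your sketched rederivation does not work as written. First, Heath-Brown's identity carries the factor $c_XX^{-2}$, not $X^{-1}$. At $n=0$ one has $\sum_q\varphi(q)h(q/X,0)=X\sum_{d\ge1}\omega(d/X)\sim X^2$. With the correct factor, the Jacobian of $y=n/X^2$ cancels $X^{-2}$ exactly, and the right definition is
\[
p_{q,X}(z)=c_X\int_{\R}h\Bigl(\frac qX,y\Bigr)w_0(y)\,\e(-X^2zy)\,dy,
\]
with no further normalising factor $\kappa_q$. Second, the lemma asserts the identity for \emph{every} integer $n$, with $q\le X$ for the same $X$ that appears in both bounds. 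Neither restricting to ``the ranges where the lemma is applied'' nor rescaling achieves this. With parameter $X$, the vanishing of $h(x,y)$ for $x>\max(1,2|y|)$ only removes $q>X\max(1,2|n|/X^2)$, so for $|n|>X^2/2$ moduli $q>X$ are not removed. Rescaling $X$ instead changes both the $q$-range and the bounds. The missing observation is that $\delta_0$ is supported at $0$, so you may insert $w_0(n/X^2)$ at no cost, where $w_0(0)=1$ and $\supp w_0\subseteq[-1/2,1/2]$. Then both sides vanish for $|n|>X^2/2$, and every $q>X$ drops out for $|n|\le X^2/2$. The identity is therefore exact for all $n$, with no truncation error to absorb; the $O(X^{-K})$ only reflects $c_X=1+O(X^{-K})$.

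The cutoff $w_0$ is also needed for $p_{q,X}$ to make sense. As $|y|\to\infty$, $h(x,y)$ tends to $\sum_{j\ge1}\omega(xj)/(xj)-x^{-1}\int_0^\infty\omega(t)t^{-1}\,dt$, which is in general nonzero. So $h(x,\cdot)$ is not integrable, and your $\int h(x,y)\e(-ry)\,dy$ is undefined. More importantly, the main-term step needs the error in $\int h(x,y)f(y)\,dy=f(0)+O(x^N)$ to be explicit in $f(y)=w_0(y)\e(-ry)$, that is, polynomial in $1+|r|$ with $r=X^2z$. You assert this rather than prove it, and it is exactly the input the paper imports from Marmon--Vishe. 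It is easy to supply. Put $F(u)=\int_{\R}f(ut)\omega(|t|)\,dt$, which is even, smooth and supported in $[-1,1]$. Then
\[
\int_{\R}h(x,y)f(y)\,dy=\widehat f(0)\sum_{j\ge1}\frac{\omega(xj)}{xj}-\sum_{j\ge1}F(xj),
\]
where $\widehat f(0)=\int_{\R}f$. Poisson summation in both sums leaves $f(0)+O_N\bigl(x^{N-1}(1+\|f^{(N)}\|_\infty)\bigr)=1+O_N\bigl(x^{N-1}(1+|r|)^N\bigr)$, and taking $N=K+1$ is stronger than required. Finally, for the decay bound, pointwise estimates $\partial_y^jh\ll x^{-1-j}$ over $|y|\le 1/2$ lose a factor $x^{-1}=X/q$ after $K$ integrations by parts. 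You need the localised bound $\partial_y^jh(x,y)\ll x^{-1-j}\{x^N+\min(1,(x/|y|)^N)\}$ to get $\int|\partial_y^K(hw_0)|\,dy\ll x^{-K}$, and hence $(1+qX|z|)^{-K}$.
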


\begin{proof}
The expansion of $\delta_0(n)$ and the decay estimate on $p_{q, X}(z)$ are 
stated in~\cite[Proposition~5.3]{StrSoVi26}, while 
the asymptotic formula for $p_{q, X}(z)$ is given in~\cite[Lemma~2.2]{MarmonVishe19}. 
\end{proof}

We emphasise that the functions $p_{q, X}:\R\to\C$ in Lemma~\ref{lem:delta-exp}
 are independent of $K$ and the implied constants depend only on $K$.

\subsection{Delta expansion and oscillatory estimates}

Following~\cite[Section~5]{StrSoVi26}, we define the determinant
polynomial and its homogeneous part  
\begin{equation}
\label{eq:Q-Q0-def}
Q(\va)=a_1a_4-a_2a_3-1,
\qquad
Q_0(\va)=a_1a_4-a_2a_3,
\end{equation}
where $\va=(a_1,a_2,a_3,a_4)\in\R^4$.

For $X\geq1$, $w\in\cD^\infty(\R^4)$ and
$\balpha\in\R^4$, define
\begin{equation}
\label{eq:TwX}
T_w(X;\balpha)
=
\sum_{\substack{\va\in\Z^4\\Q(\va)=0}}
\e(\balpha\cdot\va)w(\va/X).
\end{equation}

When $\balpha\in\Q^4$, we denote by $\fs(\balpha)$ the least positive integer $s$
such that $s\balpha\in\Z^4$.   We recall the definition of $\cG_s$ as in~\eqref{eq:def_G_m}
and   set
\begin{equation}
\label{eq:lambda-s-b}
\lambda(\balpha)
=
\frac{1}{\# \cG_s}\sum_{g\in \cG_s}\e_s(\vb\cdot g),
\end{equation}
where $s = \fs (\balpha)$,  $\vb=s\balpha$ and, as we have mentioned in Section~\ref{sec:not}, in the dot-products $\vb\cdot g$,  the matrix $g$ is identified with the vector of its four entries.

Put
\begin{equation}
\label{eq:Q1-def}
Q_1(\vu)=Q_0(\vu)-X^{-2},
\end{equation}
so that $Q(X\vu)=X^2Q_1(\vu)$ and, for $z \in \R$ and $\vv\in \R^4$, define
\begin{equation}
\label{eq:I-z-v-def}
I_w(z,\vv)
=
\int_{\R^4}w(\vu)
\e\(zQ_1(\vu)+\vv\cdot\vu\)
\,d\vu,
\end{equation}
and, for an integer $q\geq1$, we also define
\begin{equation}
\label{eq:S-q-v-def}
S(q,\vv)
=
 \sideset{}{^*}\sum_{c\bmod q}\ \sum_{\vx\bmod q}
\e_q\(cQ(\vx)+\vv\cdot\vx\).
\end{equation}

\begin{lemma} 
\label{lem:T approx} 
Fix some $B\geq1$. Let
$w\in\cD^\infty(\R^4)$ satisfy
$\supp w\subseteq[-B,B]^4$. Then,  for
$\balpha\in\R^4$,
\begin{equation}
\label{eq:delta-poisson-formula}
\begin{split}
T_w(X;\balpha)
&=
X^4\sum_{1\leq q\leq X}\frac1{q^4}
\sum_{\vv\in\Z^4}S(q,\vv)\\
&\quad\times
\int_{\R}p_{q, X}(z)
I_w\(zX^2,\frac Xq(q\balpha-\vv)\)\,dz\\
&\qquad \qquad \qquad \qquad \qquad \qquad \qquad +O(\|w\|_\infty).
\end{split}
\end{equation}
\end{lemma}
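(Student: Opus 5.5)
Insert the $\delta$-expansion of Lemma~\ref{lem:delta-exp} into the definition~\eqref{eq:TwX}, which we write as
\[
T_w(X;\balpha)=\sum_{\va\in\Z^4}\delta_0(Q(\va))\,\e(\balpha\cdot\va)\,w(\va/X).
\]
Since $\supp w\subseteq[-B,B]^4$, only $O(X^4)$ vectors $\va$ contribute. Choosing $K$ large (say $K=5$), the error term $O(X^{-K})$ in Lemma~\ref{lem:delta-exp} contributes $O(X^{4-K}\|w\|_\infty)=O(\|w\|_\infty)$. The main part is then
\[
\sum_{1\le q\le X}\ \sideset{}{^*}\sum_{c\bmod q}\int_\R p_{q,X}(z)\sum_{\va\in\Z^4}\e\Bigl(\Bigl(\frac cq+z\Bigr)Q(\va)+\balpha\cdot\va\Bigr)w(\va/X)\,dz .
\]
Interchanging the finite $\va$-sum with the $z$-integral is harmless, because $p_{q,X}(z)\ll(1+qX|z|)^{-K}$ is integrable.

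Next, split $\va=\vx+q\vy$ with $\vx\bmod q$ and $\vy\in\Z^4$. Since $Q$ has integer coefficients, $\e_q(cQ(\va))=\e_q(cQ(\vx))$. The remaining sum over $\vy$ of the smooth, compactly supported function
\[
\vy\mapsto \e\bigl(zQ(\vx+q\vy)+\balpha\cdot(\vx+q\vy)\bigr)\,w((\vx+q\vy)/X)
\]
is handled by Poisson summation. The Fourier coefficient at $\vv\in\Z^4$ is, after the change of variables $\vt=\vx+q\vy$,
\[
q^{-4}\int_{\R^4}\e\bigl(zQ(\vt)+\balpha\cdot\vt\bigr)w(\vt/X)\,\e(-\vv\cdot(\vt-\vx)/q)\,d\vt .
\]
This produces the factor $\e_q(\vv\cdot\vx)$. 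Summing over $c$ and $\vx$ then gives exactly $S(q,\vv)$ from~\eqref{eq:S-q-v-def}, after the harmless sign change $\vv\to-\vv$, which we absorb by writing the phase as $\vv\cdot\vx$ and keeping track of the sign.

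Finally, substitute $\vt=X\vu$. Since $Q(X\vu)=X^2Q_1(\vu)$, the integral becomes
\[
X^4\int_{\R^4} w(\vu)\,\e\Bigl(zX^2Q_1(\vu)+\frac Xq(q\balpha-\vv)\cdot\vu\Bigr)d\vu=X^4I_w\Bigl(zX^2,\frac Xq(q\balpha-\vv)\Bigr).
\]
This gives the main term of~\eqref{eq:delta-poisson-formula}. We must also check the sign convention so that the argument is $q\balpha-\vv$ rather than $q\balpha+\vv$. If the signs come out reversed, the substitution $\vv\mapsto-\vv$ applied simultaneously in $S(q,\vv)$ and in the integral fixes this, using the symmetry $\vx\mapsto-\vx$ together with $Q(-\vx)=Q(\vx)$, which gives $S(q,-\vv)=S(q,\vv)$.

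The only genuinely delicate point is justifying Poisson summation and the interchange of the $\vv$-sum with the $z$-integral. Poisson applies because, for fixed $z$, the summand is a Schwartz function of $\vy$. For the interchange, I would integrate by parts in $\vu$ in $I_w$: for $|\vv|$ large compared with $q|\balpha|+|z|X^2\cdot O_B(1)$, this gives rapid decay in $\vv$. Combined with the trivial bound $|S(q,\vv)|\le q^5$ and the decay of $p_{q,X}$, this yields absolute convergence of the $\vv$-sum and $z$-integral together. The resulting identity then holds with total error $O(\|w\|_\infty)$, with implied constants depending only on $B$.
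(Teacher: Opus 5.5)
Your argument is correct and is the standard computation (insert the $\delta$-expansion, split $\va=\vx+q\vy$, apply Poisson summation in $\vy$, rescale $\vt=X\vu$) that the paper does not write out but imports from \cite[Equations~(59)--(62)]{StrSoVi26}; your Fubini justification, using non-stationary phase decay in $\vv$ together with the decay of $p_{q,X}$ in $z$, supplies the details the citation leaves implicit. The sign hedging is unnecessary: with $\int f(\vy)\e(-\vv\cdot\vy)\,d\vy$ as the Fourier coefficient, your own computation already produces $\e_q(\vv\cdot\vx)$ and the frequency $\frac Xq(q\balpha-\vv)$, matching~\eqref{eq:S-q-v-def} and~\eqref{eq:I-z-v-def} with no reindexing.
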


\begin{proof}
The calculation in~\cite[Equations~(59)--(62)]{StrSoVi26} applies directly to the
sum $T_w(X;\balpha)$.  With the notation~\eqref{eq:Q1-def}, \eqref{eq:I-z-v-def} and~\eqref{eq:S-q-v-def},   
the resulting formula is exactly~\eqref{eq:delta-poisson-formula}.
\end{proof}

We also need the following analogue of~\cite[Equation~(64)]{StrSoVi26}. 

\begin{lemma} 
\label{lem:complete-exponential-sum}
For every $q\geq1$ and $\vv\in\Z^4$, 
\[
|S(q,\vv)|\leq\tau(q)q^{5/2}.
\]
\end{lemma}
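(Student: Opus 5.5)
The plan is to evaluate $S(q,\vv)$ in closed form as $q^2$ times a Kloosterman sum, and then apply the Weil--Estermann bound. No reduction to prime powers should be needed: the quadratic form $Q_0(\vx)=x_1x_4-x_2x3$ splits into two hyperbolic planes, and the sum over $\vx$ can be computed exactly for every $c$ coprime to $q$.

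Fix $c$ with $\gcd(c,q)=1$ and write $\bar c$ for its inverse modulo $q$. The inner sum factors as
\[
\e_q(-c)\Bigl(\sum_{x_1,x_4\bmod q}\e_q\bigl(cx_1x_4+v_1x_1+v_4x_4\bigr)\Bigr)
\Bigl(\sum_{x_2,x_3\bmod q}\e_q\bigl(-cx_2x_3+v_2x_2+v_3x_3\bigr)\Bigr).
\]
In the first factor I sum over $x_4$ first. By orthogonality this gives $q$ if $cx_1\equiv -v_4 \bmod q$ and $0$ otherwise. Since $c$ is invertible, the condition forces $x_1\equiv-\bar c v_4$, so the first factor equals $q\,\e_q(-\bar c v_1v_4)$. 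The same argument applied to the second factor, summing over $x_3$ first and then setting $x_2\equiv \bar c v_3$, gives $q\,\e_q(\bar c v_2v_3)$. Therefore
\[
S(q,\vv)=q^2\ \sideset{}{^*}\sum_{c\bmod q}\e_q\bigl(-c-\bar c\,(v_1v_4-v_2v_3)\bigr)=q^2\,K\bigl(-1,-(v_1v_4-v_2v_3);q\bigr),
\]
where $K(m,n;q)$ is the classical Kloosterman sum. The only care needed is bookkeeping of signs; the magnitude is all that matters.

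The final step is the Weil bound in the form valid for all moduli, including powers of $2$:
\[
|K(m,n;q)|\le\tau(q)\,q^{1/2}\gcd(m,n,q)^{1/2}.
\]
This is proved via Weil's theorem for primes and Estermann's elementary evaluation for higher prime powers, combined by twisted multiplicativity; see, for example,~\cite[Corollary~11.12]{IwKow04}. Since $m=-1$, the gcd equals $1$, and this yields $|S(q,\vv)|\le\tau(q)q^{5/2}$ as claimed.

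The only potentially delicate point is quoting the Kloosterman bound in the uniform form with the factor $\tau(q)$ for all $q$, not just squarefree or odd $q$. The orthogonality computation itself is routine.
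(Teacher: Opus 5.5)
Your proof is correct and follows the same route as the paper. The paper gets the identity $S(q,\vv)=q^2\sideset{}{^*}\sum_{c\bmod q}\e_q\(-c-c^{-1}Q_0(\vv)\)$ by specialising a computation from~\cite{StrSoVi26} and then applies the Weil bound of~\cite[Corollary~11.12]{IwKow04} with $\gcd(-1,\cdot,q)=1$, exactly as you do; your orthogonality computation over the two hyperbolic planes just writes out the cited step, and your signs agree with the paper's formula.
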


\begin{proof}
A specialisation of~\cite[Equation~(63)]{StrSoVi26} to 
the determinant polynomial~\eqref{eq:Q-Q0-def} 
(after we follow the same transformations as in~\cite{StrSoVi26}) implies
\[
S(q,\vv)
=
q^2 \sideset{}{^*}\sum_{c\bmod q}
\e_q\(-c-c^{-1}Q_0(\vv)\).
\]
Applying the Weil  bound for Kloosterman sums in the form~\cite[Corollary~11.12]{IwKow04} 
gives the desired result.
\end{proof}

Note that for the symmetric matrix  
\[
M_0=
\begin{pmatrix}
0&0&0&1\\
0&0&-1&0\\
0&-1&0&0\\
1&0&0&0
\end{pmatrix},
\]
we have 
\[
\left(
\frac{\partial Q_0(\vu)}{\partial u_1},\ldots,
\frac{\partial Q_0(\vu)}{\partial u_4}
\right)
=\vu M_0,
\]
where $\vu = (u_1, \ldots,  u_4)$ 
is the row vector of variables.

Next we  record the standard $C^5$ estimate from~\cite[Equation~(67)]{StrSoVi26} together with an edge-sensitive refinement in which
the differentiated weight remains inside the integral.

\begin{lemma} 
\label{lem:oscillatory-integral}
Let $B\geq1$ be fixed.  Assume that   $w\in \cD^\infty(\R^4)$ satisfies
$\supp w\subseteq [-B,B]^4$.  For $z\in\R$, put
\[
\delta=(1+|z|)^{-1/2},
\]
and define
\[
K_{z,\vv}(\vu)
=
\(1+\delta\|z\vu M_0+\vv\|\)^{-5}.
\] 
Uniformly for $z\in\R$ and $\vv\in\R^4$, we have 
\[
I_w(z,\vv)
\ll 
\sum_{i=1}^4\sum_{j=0}^5
\delta^j
\int_{\R^4}|\partial_i^jw(\vu)|
K_{z,\vv}(\vu)\,d\vu.
\]
\end{lemma}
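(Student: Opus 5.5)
We will prove the bound by repeated integration by parts in $I_w(z,\vv)$, adapting the argument behind~\cite[Equation~(67)]{StrSoVi26}. The only change is that the derivatives are kept on $w$, inside the integral. The phase is
\[
\phi(\vu)=zQ_1(\vu)+\vv\cdot\vu,\qquad \nabla\phi(\vu)=z\vu M_0+\vv,
\]
since the constant $-zX^{-2}$ in $Q_1$ does not affect the gradient. The Hessian of $\phi$ is $zM_0$, a constant matrix of norm $|z|$. Hence all higher derivatives of the phase vanish, which makes the analysis clean.

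\textbf{Step 1: a smooth partition of unity.} We split $\R^4$ according to the size of $\nabla\phi$ in units of $\delta^{-1}$. Write $G(\vu)=z\vu M_0+\vv$. Where $\delta\|G(\vu)\|\le 2$, the kernel satisfies $K_{z,\vv}\asymp 1$, so the trivial bound $\int|w|K_{z,\vv}$ already covers the $j=0$ term. For the remaining region we do not use a dyadic decomposition. We choose a coordinate $i$ with $|G_i(\vu)|\ge\|G(\vu)\|/2$ and integrate by parts in $u_i$ alone. This is exactly why the right-hand side contains only pure derivatives $\partial_i^j$ and no mixed ones. Concretely, we take a smooth partition of unity $1=\chi_0+\sum_{i=1}^4\chi_i$ subordinate to the cones $\{\delta\|G\|\le2\}$ and $\{|G_i|\ge\|G\|/2,\ \delta\|G\|\ge1\}$, with $\chi_i$ depending on $\vu$ only through $G(\vu)$ and homogeneous of degree $0$ in $\delta G$ away from the origin.

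\textbf{Step 2: integration by parts.} On the piece carrying $\chi_i$ we apply the operator
\[
L f=\frac{1}{2\pi i\,\partial_i\phi}\,\partial_i f
\]
(so that $L\,\e(\phi)=\e(\phi)$) five times, then move it onto $w\chi_i$ by taking adjoints. Each adjoint step produces terms of three kinds: a derivative falling on $w$; a derivative falling on $1/\partial_i\phi$, which costs $|\partial_i^2\phi|/|\partial_i\phi|^2\le|z|/|G_i|^2$; or a derivative falling on $\chi_i$, which costs $\ll|z|/\|G\|$ by homogeneity. Since $|G_i|\asymp\|G\|\ge\delta^{-1}$ and $|z|\le\delta^{-2}$, each extra factor is $\ll|G_i|^{-1}\cdot\max(1,|z|/|G_i|)\ll\delta$. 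After five steps we obtain
\[
\sum_{j=0}^5\int|\partial_i^jw|\,|G_i|^{-(5-j)}\delta^{\,\cdot}\cdots
\;\ll\;\sum_{j=0}^5\delta^j\int|\partial_i^jw|\,(\delta\|G\|)^{-5}\,d\vu .
\]
Here we regroup by counting: every unit of $\|G\|^{-1}$ that is not paired with a derivative on $w$ is converted into $\delta(\delta\|G\|)^{-1}$, and every derivative on $w$ comes with $\|G\|^{-1}\le\delta(\delta\|G\|)^{-1}\cdot\delta^{-1}$. Tracking the powers carefully, each term is at most $\delta^j|\partial_i^jw|(\delta\|G\|)^{-(5-j)}$ times further factors $\le(\delta\|G\|)^{-j}$, so the total decay is $(\delta\|G\|)^{-5}$. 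On this support $(\delta\|G\|)^{-5}\asymp K_{z,\vv}$.

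\textbf{Main obstacle.} The delicate point is the bookkeeping in Step 2. We must verify that derivatives of $1/\partial_i\phi$ and of the cutoff $\chi_i$ really gain a factor $\delta$ uniformly, including in the regime $|z|\le1$, where $\delta\asymp1$ and the bound reduces to the trivial one. We must also confirm that no boundary terms arise; this holds because $w$ has compact support. Since the phase is quadratic, every derivative of $1/\partial_i\phi$ takes the form $c\,z^k/(\partial_i\phi)^{k+1}$, which makes the verification routine. Summing over $i$ and adding the $\chi_0$ contribution then gives the stated bound.
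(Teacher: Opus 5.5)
Your argument is correct, but it takes a different route from the paper. The paper never builds a smooth partition of unity. It averages $I_w(z,\vv)$ over translations $\vu\mapsto\vu+\delta\vy$ against a fixed bump $w_0(\vy)$, so the five integrations by parts happen in the auxiliary variable $y_i$. This has three consequences:
\begin{itemize}
\item The splitting by the largest coordinate of $\bell(\vu)=z\vu M_0+\vv$ can be a merely measurable partition $\Omega_1,\ldots,\Omega_4$.
\item The amplitude $A_{i,\vu}$ has uniformly bounded $y_i$-derivatives, because $|z|\delta^2\le1$ and $Q_0$ is linear in each coordinate.
\item The price is a final step comparing $K_{z,\vv}(\vx-\delta\vy)\asymp K_{z,\vv}(\vx)$ and applying Tonelli.
\end{itemize}
Your route is the textbook non-stationary phase argument directly in $\vu$. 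It needs smooth conic cutoffs $\chi_i$ depending on $z$ and $\vv$, whose $u_i$-derivatives cost $\ll|z|/\|G\|$ each, but it needs no kernel comparison. It does close. Since $\partial_i^2\phi=z(M_0)_{ii}=0$, derivatives of $1/\partial_i\phi$ in $u_i$ vanish. The five adjoint steps therefore give exactly $(2\pi i\,G_i)^{-5}\partial_i^5(w\chi_i)$, and on $\supp\chi_i$ the term with $j$ derivatives on $w$ satisfies
\[
\ll \|G\|^{-j}\Bigl(\frac{|z|}{\|G\|^{2}}\Bigr)^{5-j}|\partial_i^jw|
\le \delta^j(\delta\|G\|)^{-5}|\partial_i^jw| .
\]
This holds because $\|G\|^{-1}=\delta(\delta\|G\|)^{-1}$, while $|z|/\|G\|^{2}=|z|\delta^2(\delta\|G\|)^{-2}\le(\delta\|G\|)^{-1}$ when $\delta\|G\|\ge1$.

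Two details in your write-up need correcting, though neither affects the conclusion.

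First, your claim that every extra factor is $\ll|G_i|^{-1}\max(1,|z|/|G_i|)\ll\delta$ is false. If $|G_i|\asymp\delta^{-1}\asymp|z|^{1/2}$, then $|z|/|G_i|^2\asymp1$. As the display shows, the steps where the derivative lands on the cutoff need not gain any $\delta$. The factor $\delta^j$ comes only from the $j$ derivatives on $w$, and the cutoff steps only need to contribute $(\delta\|G\|)^{-1}$ each. Your final regrouped inequality is right, but this justification for it is not.

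Second, no continuous partition of unity can be subordinate to the closed cones $\{|G_i|\ge\|G\|/2\}$. The direction $(1,1,1,1)$ lies on the boundary of all four, and nearby directions such as $(1,1+\epsilon,1,1)$ lie in only one cone. Use the open cones $\{|G_i|>\|G\|/3\}$ instead. These cover $\R^4\setminus\{\vec{0}\}$, since the largest coordinate of any $G$ has absolute value at least $\|G\|/2$, and they still give $|G_i|\asymp\|G\|$ on $\supp\chi_i$.
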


\begin{proof}
Choose a non-negative
function $w_0$ such that
\[
w_0\in \cD^\infty((0,1)^4),
\qquad
\int_{\R^4}w_0(\vy)\,d\vy=1.
\]
For each fixed $\vy$, translate the variable of integration in~\eqref{eq:I-z-v-def} by $\delta \vy$.  Averaging the resulting identity
against $w_0(\vy)$ and using
\[
Q_1(\vu+\delta\vy)
=
Q_1(\vu)
+\delta(\vu M_0)\cdot\vy
+\delta^2Q_0(\vy),
\]
we obtain
\[
I_w(z,\vv)
=
\int_{\R^4}
\e\(zQ_1(\vu)+\vv\cdot\vu\)
J_w(\vu)\,d\vu,
\]
where
\[
J_w(\vu)
=
\int_{[0,1]^4}
 w_0(\vy)w(\vu+\delta\vy)
 \e\(z\delta^2Q_0(\vy)
       +\delta(z\vu M_0+\vv)\cdot\vy\)
\,d\vy.
\]

Set
\[
\bell(\vu)=z\vu M_0+\vv = \(\ell_1(\vu), \ldots, \ell_4(\vu)\).
\]
Partition $\R^4$ into measurable sets $\Omega_1,\ldots,\Omega_4$ so that,
for $\vu\in\Omega_i$, the $i$th coordinate of
$\bell(\vu)$ has maximal absolute value; ties are resolved
by choosing the smallest index.  Thus, for $\vu\in\Omega_i$,
\begin{equation}
\label{eq:maximal-coordinate-comparison}
|\ell_i(\vu)| \leq \|\bell(\vu)\| \leq 2|\ell_i(\vu)|.
\end{equation} 
Fix $\vu\in\Omega_i$ and write
\[
A_{i,\vu}(\vy) = w_0(\vy) 
\e\( z\delta^2Q_0(\vy) +\delta\sum_{k\neq i}\ell_k(\vu)y_k\).
\]
Then
\[
J_w(\vu) =\int_{[0,1]^4} A_{i,\vu}(\vy)
 w(\vu+\delta\vy) \e\(\delta\ell_i(\vu)y_i\)\,d\vy.
\]
The second term in the phase defining $A_{i,\vu}$ is independent of
$y_i$.  Moreover, $Q_0$ is linear in each coordinate and
$|z|\delta^2\leq1$.  Since $w_0$ is fixed, it follows that
\begin{equation}
\label{eq:Ai-derivative-bound}
\sup_{\vy\in\R^4}
|\partial_{y_i}^rA_{i,\vu}(\vy)|
\ll 1,
\qquad 0\leq r\leq5,
\end{equation}
uniformly in $z$, $\vv$ and $\vu\in\Omega_i$.

Put $\rho=\delta|\ell_i(\vu)|$.  If $\rho\geq1$, then five
integrations by parts in $y_i$ give
\[
J_w(\vu)
=
\frac{(-1)^5}{(2\pi i\delta\ell_i(\vu))^5}
\int_{[0,1]^4}
\partial_{y_i}^5
\(A_{i,\vu}(\vy)
      w(\vu+\delta\vy)\)
\e\(\delta\ell_i(\vu)y_i\)
\,d\vy.
\]
There are no boundary terms because $A_{i,\vu}$ is compactly supported
in $(0,1)^4$.  By the chain rule,
\[
\partial_{y_i}^j w(\vu+\delta\vy)
=
\delta^j(\partial_i^jw)(\vu+\delta\vy),
\qquad 0\leq j\leq5.
\]
The Leibniz rule and~\eqref{eq:Ai-derivative-bound} therefore yield
\[
J_w(\vu)
\ll
\rho^{-5}
\sum_{j=0}^5\delta^j
\int_{[0,1]^4}
|\partial_i^jw(\vu+\delta\vy)|\,d\vy.
\]
If $\rho<1$, the same estimate without the factor $\rho^{-5}$ follows by
taking absolute values, using the term $j=0$ on the right.  In the ranges
$\rho<1$ and $\rho\geq1$, respectively, the factors $1$ and
$\rho^{-5}$ are both $O((1+\rho)^{-5})$.  Moreover,~\eqref{eq:maximal-coordinate-comparison} implies
$1+\rho\gg1+\delta\|\bell(\vu)\|$.  Thus, recalling the definition of $K_{z,\vv}(\vu)$,  we obtain
\begin{equation}
\label{eq:J-pointwise-bound}
J_w(\vu)
\ll
K_{z,\vv}(\vu)
\sum_{j=0}^5\delta^j
\int_{[0,1]^4}
|\partial_i^jw(\vu+\delta\vy)|\,d\vy
\qquad (\vu\in\Omega_i).
\end{equation}

Taking absolute values in the outer integral and applying~\eqref{eq:J-pointwise-bound}, we obtain
\[
I_w(z,\vv)
\ll
\sum_{i=1}^4\sum_{j=0}^5\delta^j
\int_{\Omega_i}\int_{[0,1]^4}
K_{z,\vv}(\vu)
|\partial_i^jw(\vu+\delta\vy)|
\,d\vy\,d\vu.
\]
For fixed $\vy$, set $\vx=\vu+\delta\vy$.  Then
\[
\delta\bell(\vu)
=
\delta\bell(\vx)
-z\delta^2\vy M_0.
\]
Set
\[
C_0=\sup_{\vy\in[0,1]^4}\|\vy M_0\|<\infty.
\]
Since $|z|\delta^2\leq1$, we have
\[
\|z\delta^2\vy M_0\|\leq C_0
\qquad (\vy\in[0,1]^4).
\]
The triangle inequality consequently gives
\begin{equation}
\label{eq:kernel-bounded-translation}
K_{z,\vv}(\vx-\delta\vy)
\asymp
K_{z,\vv}(\vx),
\qquad \vy\in[0,1]^4,
\end{equation}
with comparison constants depending only on $M_0$.  Tonelli's theorem~\cite[Theorem~2.37]{Folland99},
the bound~\eqref{eq:kernel-bounded-translation}, the inclusion
$\Omega_i+\delta\vy\subset\R^4$ and
$\vol([0,1]^4)=1$ now give
\[
\begin{aligned}
I_w(z,\vv)
&\ll
\sum_{i=1}^4\sum_{j=0}^5\delta^j
\int_{[0,1]^4}
\int_{\Omega_i+\delta\vy}
K_{z,\vv}(\vx)
|\partial_i^jw(\vx)|
\,d\vx\,d\vy \\
&\leq
\sum_{i=1}^4\sum_{j=0}^5\delta^j
\int_{\R^4}
K_{z,\vv}(\vx)
|\partial_i^jw(\vx)|\,d\vx.
\end{aligned}
\]
This concludes the proof. 
\end{proof}

We now record two useful consequences of
Lemma~\ref{lem:oscillatory-integral}, formulated in terms of
\begin{equation}
\label{eq:Whw}
 \cW_{w,h} = h^{-4}\|w\|_{\cE_h^5},
\end{equation}
where $\|\cdot\|_{\cE_h^5}$ is the edge norm defined
in~\eqref{eq:edge-norm}. 
We shall use the notation~\eqref{eq:Whw} throughout
the remainder of the paper.

\begin{cor} 
\label{cor:oscillatory-integral} Under the conditions of Lemma~\ref{lem:oscillatory-integral}, 
we have:
\begin{itemize}
\item[(i)]  for $0<h\leq1$, if for some  $L\geq0$ we have
\begin{equation}
\label{eq:phase-gradient-lower-bound}
\|z\vu M_0+\vv\|\geq L
\qquad (\vu\in\supp w),
\end{equation}
then
\[
I_w(z,\vv)
\ll
 \cW_{w,h} (1+\delta L)^{-5};
\]
\item[(ii)]   for every $0<h\leq1$, uniformly in $z$ and $\vv$, one also has
\[
I_w(z,\vv)
\ll
 \cW_{w,h} (1+|z|)^{-2}.
\]
\end{itemize}
\end{cor}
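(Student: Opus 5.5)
Both parts come from Lemma~\ref{lem:oscillatory-integral}: we bound the kernel $K_{z,\vv}$ pointwise on $\supp w$ and then control the weighted integrals $\delta^j\int|\partial_i^jw|$ by the edge norm~\eqref{eq:edge-norm}. The factor $h^{-4}$ in $\cW_{w,h}$ pays for the gap between $\delta^j$ and $h^j$.

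\textbf{Part (i).} On $\supp w$, the hypothesis~\eqref{eq:phase-gradient-lower-bound} gives $K_{z,\vv}(\vu)\le(1+\delta L)^{-5}$. Lemma~\ref{lem:oscillatory-integral} then yields
\[
I_w(z,\vv)\ll(1+\delta L)^{-5}\sum_{i=1}^4\sum_{j=0}^5\delta^j\|\partial_i^jw\|_1 .
\]
The term $j=0$ is $\|w\|_1$. For $1\le j\le5$ we have $\delta\le1$, and we compare $\delta^j$ with $h^{j-1}$. If $\delta\le h$, then $\delta^j\le h^{j-1}\delta\le h^{j-1}$. If $\delta>h$, then $\delta^j\le 1\le h^{-4}h^{j-1}$ because $j-1\le4$. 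In either case $\delta^j\|\partial_i^jw\|_1\le h^{-4}\,h^{j-1}\|\partial_i^jw\|_1\le h^{-4}\|w\|_{\cE_h^5}$. This proves~(i), with room to spare.

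\textbf{Part (ii).} There is no lower bound on the gradient here, so the decay in $z$ has to come from integrating $K_{z,\vv}$ over $\vu$. We drop the derivatives in sup-norm and write
\[
\delta^j\int|\partial_i^jw|K_{z,\vv}\le\delta^j\|\partial_i^jw\|_\infty\int_{[-B,B]^4}K_{z,\vv}(\vu)\,d\vu .
\]
For $|z|\ge1$, substitute $\vx=\delta(z\vu M_0+\vv)$. Since $M_0$ is invertible with $|\det M_0|=1$, this is a linear change of variables with Jacobian $(\delta|z|)^{-4}$, so
\[
\int K_{z,\vv}\ll(\delta|z|)^{-4}\int_{\R^4}(1+\|\vx\|)^{-5}\,d\vx\ll(\delta|z|)^{-4}\asymp|z|^{-2},
\]
using $\delta\asymp|z|^{-1/2}$. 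The integral converges since $5>4$. For $|z|<1$ we use the trivial bound $\int_{[-B,B]^4}K\ll1$. This step is what I expect to be the main obstacle: it needs exactly the fifth power in $K$ for integrability in four dimensions, and the nondegeneracy of $M_0$ to turn the Jacobian into the factor $|z|^{-2}$.

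It remains to bound $\delta^j\|\partial_i^jw\|_\infty$. For $j=0$ it is at most $\|w\|_\infty$. For $1\le j\le5$, if $\delta\le h$ then $\delta^j\|\partial_i^jw\|_\infty\le h^j\|\partial_i^jw\|_\infty$. If $\delta>h$ then $\delta^j\le1\le h^{-5}h^j$, which loses $h^{-5}$ rather than $h^{-4}$. To recover the stated $h^{-4}$, I would keep the $L^1$ form instead in the regime $\delta>h$: there $\delta^j\|\partial_i^jw\|_1\le h^{-4}h^{j-1}\|\partial_i^jw\|_1$. I would combine this with the fact that $K$ concentrates on a slab near $\{\vu:\ z\vu M_0+\vv=0\}$ of width about $(\delta|z|)^{-1}$ in each direction, which is where the measure $|z|^{-2}$ comes from. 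Concretely, I would split into $\delta\le h$, where the sup-norm bound applies, and $\delta>h$, where $|z|<h^{-2}$. In the second case I would use the $L^1$ norms and the bound $\|K\|_\infty\le1$ to get $I_w\ll h^{-4}\|w\|_{\cE_h^5}$. This suffices when $(1+|z|)^{-2}\gg h^{4}$, which holds because $|z|<h^{-2}$. Combining the two regimes gives~(ii).
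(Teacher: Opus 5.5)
Part (i), the kernel computation $\int_{\R^4}K_{z,\vv}\ll(1+|z|)^{-2}$ for $|z|\ge1$, and your treatment of the regime $\delta\le h$ in Part (ii) are all correct and agree with the paper. The gap is in the regime $\delta>h$, that is $1+|z|<h^{-2}$. There you bound $\delta^j\|\partial_i^jw\|_1$ using only $\delta\le1$ and obtain $I_w(z,\vv)\ll h^{-4}\|w\|_{\cE_h^5}=\cW_{w,h}$. You then say this suffices because $(1+|z|)^{-2}\gg h^4$, and that inference is false. The target is $\cW_{w,h}(1+|z|)^{-2}$, so a bound of size $\cW_{w,h}$ is acceptable only for $|z|\ll1$. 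For $1\le|z|<h^{-2}$ you are off by the factor $(1+|z|)^2$, which can be as large as $h^{-4}$. The condition $(1+|z|)^{-2}\gg h^4$ would be the right one only if your bound were $O(\|w\|_{\cE_h^5})$, without the $h^{-4}$. By replacing $\delta^j$ with $1$ you discarded precisely the $z$-decay that (ii) requires.

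The underlying misdiagnosis is that the sup-norm route loses $h^{-5}$ for every $1\le j\le5$. In fact $\delta^j\|\partial_i^jw\|_\infty\le h^{-j}\|w\|_{\cE_h^5}\le h^{-4}\|w\|_{\cE_h^5}$ for $0\le j\le4$. So for these $j$ the sup-norm bound combined with $\int K_{z,\vv}\ll(1+|z|)^{-2}$ already gives $\cW_{w,h}(1+|z|)^{-2}$, with no case split. Only $j=5$ needs the $L^1$ route with $K_{z,\vv}\le1$, and there you must keep $\delta^5$: $\delta^5\|\partial_i^5w\|_1\le\delta^5h^{-4}\|w\|_{\cE_h^5}=\cW_{w,h}(1+|z|)^{-5/2}$. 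This is exactly the paper's argument, which handles $|z|\le1$ by applying (i) with $L=0$.

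Alternatively, your split also works once the $\delta^j$ are retained. For $\delta>h$ and $1\le j\le5$ one has $\delta^j\|\partial_i^jw\|_1\le h(\delta/h)^j\|w\|_{\cE_h^5}\le h^{-4}\delta^5\|w\|_{\cE_h^5}$. The $j=0$ term satisfies $\|w\|_1\le\|w\|_{\cE_h^5}\le h^{-4}\delta^4\|w\|_{\cE_h^5}$. Hence $I_w(z,\vv)\ll\cW_{w,h}\delta^4=\cW_{w,h}(1+|z|)^{-2}$.
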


\begin{proof}
To prove (i), suppose that~\eqref{eq:phase-gradient-lower-bound} holds.  Since every
$\partial_i^jw$ is supported in $\supp w$, Lemma~\ref{lem:oscillatory-integral} gives
\[
I_w(z,\vv)
\ll
(1+\delta L)^{-5}
\(
\|w\|_1
+
\sum_{i=1}^4\sum_{j=1}^5
\delta^j\|\partial_i^jw\|_1
\).
\]
By~\eqref{eq:edge-norm}, we have
$\|\partial_i^jw\|_1\leq h^{1-j}\|w\|_{\cE_h^5}$ for
$1\leq j\leq5$.  Since $0<h,\delta\leq1$ and
$h^{1-j}\leq h^{-4}$ throughout this range, we obtain~(i).

We now prove~(ii).  If $|z|\leq1$, then Part~(i), applied with
$L=0$, gives
\[
I_w(z,\vv)\ll\cW_{w,h}.
\]
Since $(1+|z|)^{-2}\asymp1$ in this range, the desired estimate
follows.
Suppose now that $|z|>1$.  Since $|\det M_0|=1$, the affine change of
variables
\[
\vt=\delta(z\vu M_0+\vv)
\]
gives
\begin{equation}
\label{eq:kernel-four-dimensional-volume}
\int_{\R^4}K_{z,\vv}(\vu)\,d\vu
=
\delta^{-4}|z|^{-4}
\int_{\R^4}(1+\|\vt\|)^{-5}\,d\vt
\ll
(1+|z|)^{-2}.
\end{equation}
For $0\leq j\leq4$, the definition of the edge norm gives
\[
\|\partial_i^jw\|_\infty
\leq
h^{-j}\|w\|_{\cE_h^5},
\]
where, due to our previous convention,  $\partial_i^0w=w$.  Since $\delta\leq1$ and $0<h\leq1$,
we have $\delta^jh^{-j}\leq h^{-4}$ for $0\leq j\leq4$.  Hence, by~\eqref{eq:kernel-four-dimensional-volume},
\[
\delta^j
\int_{\R^4}|\partial_i^jw(\vu)|
K_{z,\vv}(\vu)\,d\vu
\ll
h^{-4}\|w\|_{\cE_h^5}(1+|z|)^{-2}
\qquad (0\leq j\leq4).
\]
For $j=5$, we instead use $K_{z,\vv}\leq1$ and, recalling the choice of $\delta$ in Lemma~\ref{lem:oscillatory-integral}, we obtain 
\[
\delta^5
\int_{\R^4}|\partial_i^5w(\vu)|
K_{z,\vv}(\vu)\,d\vu
\leq
\delta^5\|\partial_i^5w\|_1
\leq
h^{-4}\|w\|_{\cE_h^5}(1+|z|)^{-5/2}.
\]
Summing over $i$ and $j$ proves~(ii).
\end{proof}

\section{Estimating the non-resonant contribution}
\label{sec:non-resonant}

\subsection{Preliminary discussion}
In the Poisson expansion~\eqref{eq:delta-poisson-formula}, the linear frequency
in the oscillatory integral~\eqref{eq:I-z-v-def} is
$X(q\balpha-\vv)/q$.  It vanishes precisely when
$\vv=q\balpha\in\Z^4$, in which case we call such a vector 
$\vv$ resonant. 

 We separate this resonant vector, when it exists,
from the remaining dual frequencies.  The denominator of a rational twist 
$\balpha$ 
therefore determines the moduli at which resonance occurs.  

For a positive integer $q\leq X$,   a real vector $\balpha\in\R^4$ and a function 
$w\in \cD^\infty(\R^4)$, we introduce 
 the quantity
\[
\cT_{w,q}^\circ(\balpha) =
\frac{X^4}{q^4} \sideset{}{^\circ}\sum_{\vv\in\Z^4}S(q,\vv)
\int_{\R}p_{q, X}(z)
I_w\(zX^2,\frac Xq(q\balpha-\vv)\)\,dz,
\]
where we use  $\Sigma_{\vv\in\Z^4}^\circ$ to denote  
that the term corresponding to $\vv=q\balpha$ is omitted from the summation (when $q\balpha\in\Z^4$). 
 
Thus $\cT_{w,q}^\circ(\balpha)$ is precisely the contribution of the
non-resonant dual frequencies to the term corresponding to the modulus $q$
in~\eqref{eq:delta-poisson-formula}.

\subsection{Bounding the non-resonant contribution}

We now estimate non-resonant contributions $\cT_{w,q}^\circ(\balpha)$ in terms of the norm $\|w\|_{\cE_h^5}$ (for an arbitrary $h\in (0,1]$).  We recall the definition of $ \cW_{w,h}$ in~\eqref{eq:Whw}.

\begin{lemma} 
\label{lem:nonresonant-dual-frequencies} 
Let $B\geq1$ be fixed. Let $X\geq1$ and let $q$ be an integer
satisfying $1\leq q\leq X$. Suppose that $\balpha\in\R^4$ and that
$w\in\cD^\infty(\R^4)$ satisfies
\[
\supp w\subseteq[-B,B]^4.
\]   Then, for every $0<h\leq1$,
\[
\cT_{w,q}^\circ(\balpha) 
\ll \cW_{w,h}
X^2\frac{\tau(q)}{q^{3/2}}
\begin{cases}
\displaystyle
\(1+\frac Xq\distZ{q\balpha}\)^{-1},
&  \text{if} \ q\balpha\notin\Z^4,\\[8pt]
\displaystyle
\frac qX,
&  \text{if} \  q\balpha\in\Z^4.
\end{cases}
\]
\end{lemma}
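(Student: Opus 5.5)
We bound each term trivially in $\vv$ after inserting the estimates already available. By Lemma~\ref{lem:complete-exponential-sum}, $|S(q,\vv)|\le\tau(q)q^{5/2}$, so
\[
\cT_{w,q}^\circ(\balpha)\ll X^4\frac{\tau(q)}{q^{3/2}}\ \sideset{}{^\circ}\sum_{\vv\in\Z^4}\int_{\R}|p_{q,X}(z)|\,\Bigl|I_w\Bigl(zX^2,\frac Xq(q\balpha-\vv)\Bigr)\Bigr|\,dz .
\]
Write $\vv'=\frac Xq(q\balpha-\vv)$; its norm is at least $\frac Xq\|q\balpha-\vv\|$. The aim is to show that the $z$-integral, summed over non-resonant $\vv$, is $\ll \cW_{w,h}X^{-2}\min\{(1+\frac Xq\distZ{q\balpha})^{-1},q/X\}$ in the two cases respectively.

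Split according to whether $|z|X^2\cdot B\cdot C$ (the size of $\|zX^2\vu M_0\|$ on $\supp w$) is at most half of $\|\vv'\|$ or not. In the first range, $\|zX^2\vu M_0+\vv'\|\ge\|\vv'\|/2$ on $\supp w$, so Corollary~\ref{cor:oscillatory-integral}(i) with $L=\|\vv'\|/2$ gives $I_w\ll\cW_{w,h}(1+\delta\|\vv'\|)^{-5}$, $\delta=(1+|z|X^2)^{-1/2}$. Since $|z|X^2\ll\|\vv'\|$ here, $\delta\|\vv'\|\gg\|\vv'\|^{1/2}$, yielding a factor $\|\vv'\|^{-5/2}$. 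With $\int|p_{q,X}|\,dz\ll (qX)^{-1}$ from Lemma~\ref{lem:delta-exp}, the sum over $\vv$ of $\|\vv'\|^{-5/2}$ converges only in dimension $<5/2$, so instead I keep the $z$-restriction: integrate $(1+|z|X^2)^{-1}$-type weights more carefully. Concretely, use $(1+\delta\|\vv'\|)^{-5}\le(1+|z|X^2)^{5/2}\|\vv'\|^{-5}$, and with $|p_{q,X}(z)|\ll1$ integrate over $|z|\ll\|\vv'\|X^{-2}$ to get $\ll X^{-2}\|\vv'\|^{7/2}\|\vv'\|^{-5}=X^{-2}\|\vv'\|^{-3/2}$; combined with the decay $(1+qX|z|)^{-K}$ one can do better, but even this suffices after optimization. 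In the second range, $|z|\gg\|\vv'\|X^{-2}$, and we use Corollary~\ref{cor:oscillatory-integral}(ii): $I_w\ll\cW_{w,h}(1+|z|X^2)^{-2}$, together with $|p_{q,X}(z)|\ll(1+qX|z|)^{-K}$, giving $\ll\cW_{w,h}X^{-2}(1+\|\vv'\|)^{-1}(1+\|\vv'\|q/X)^{-K}$ after integrating over $|z|\ge\|\vv'\|X^{-2}$.

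Summing over $\vv\ne q\balpha$: the factor $(1+\|\vv'\|q/X)^{-K}$ restricts effectively to $\|q\balpha-\vv\|\ll q^{1-\epsilon}\cdot$ small, i.e. $\|\vv'\|\ll X/q\cdot X^{o(1)}$. The contribution of the closest lattice point $\vv$ gives $(1+\frac Xq\distZ{q\balpha})^{-1}$ in the first case (and $\|\vv'\|\ge X/q$ for every non-resonant $\vv$ in the resonant case, giving $q/X$). The remaining $\vv$ have $\|q\balpha-\vv\|\asymp k$ for dyadic $k\ge1$, with $O(k^3)$ points, so the sum is $\sum_k k^3 (Xk/q)^{-1}(1+k)^{-K}$ in the second range, and for the first range the bound $X^{-2}\|\vv'\|^{-3/2}$ must also be damped; here I would actually reuse Corollary~\ref{cor:oscillatory-integral}(i) with larger power from Lemma~\ref{lem:oscillatory-integral}, or rather exploit that $|p_{q,X}|$ restricts $|z|\ll(qX)^{-1}X^{o(1)}$, so $|z|X^2\ll X/q$, and then for $\|\vv'\|\ge X^{\epsilon}X/q$ the factor $\delta\|\vv'\|\ge\|\vv'\|(q/X)^{1/2}$ gives $(q/X)^{5/2}\|\vv'\|^{-5}$, summable over $\vv\in\Z^4$ with room to spare.

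The main obstacle is this bookkeeping in the first range: obtaining convergence of the $\vv$-sum with only fifth-power decay in a four-dimensional lattice while retaining the sharp $X^{-2}$ and the factor $(1+\frac Xq\distZ{q\balpha})^{-1}$ from the nearest point. I expect to handle it by splitting $z$ at $|z|\le X^{\epsilon}/(qX)$ (using the tail of $p_{q,X}$ beyond, which is $O(X^{-K})$ and negligible) and noting the $z$-measure $\ll X^{\epsilon}/(qX)$ times $(1+\delta\|\vv'\|)^{-5}$; the extra $X^{o(1)}$ losses should be absorbed since the statement is only claimed up to constants, so I would arrange the split via the exact decay $(1+qX|z|)^{-K}$ rather than a sharp cutoff to avoid such losses.
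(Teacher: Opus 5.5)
Your plan is correct and is essentially the paper's proof. You bound $S(q,\vv)$ trivially, split each $(\vv,t)$-term according to whether $|t|\le c\|\vv'\|$ (Corollary~\ref{cor:oscillatory-integral}(i)) or not (Corollary~\ref{cor:oscillatory-integral}(ii)), which is the paper's good/bad classification summed in the other order, use the decay of $p_{q,X}$, and treat the nearest lattice point separately.

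One correction: your aside that the per-$\vv$ bound $X^{-2}\|\vv'\|^{-3/2}$ suffices is false, because it is not summable over $\Z^4$. The fix you settle on is the right one, since $\|\vv'\|^{-5}\int_{\R}(1+q|t|/X)^{-K}(1+|t|)^{5/2}\,dt\ll(q/X)^{3/2}\|q\balpha-\vv\|^{-5}$, and this is summable over $\|q\balpha-\vv\|\ge1/2$ with no $X^{o(1)}$ loss.
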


\begin{proof}
Fix an integer $K\geq5$.  Using the decay estimate on $p_{q, X}(z)$ from Lemma~\ref{lem:delta-exp} and the bound on $S(q,\vv)$ from Lemma~\ref{lem:complete-exponential-sum} and substituting $t=zX^2$,  we derive 
\begin{equation}
\label{eq:Tq-to-I-sum}
\cT_{w,q}^\circ(\balpha)
\ll 
X^2\frac{\tau(q)}{q^{3/2}}\cK_q^\circ(\balpha),
\end{equation}
where
\[
\cK_q^\circ(\balpha)
=
\int_{\R}
\(1+\frac{q|t|}{X}\)^{-K}
\sideset{}{^\circ}\sum_{\vv\in\Z^4}
\left|I_w\(t,\frac Xq(q\balpha-\vv)\)\right|\,dt.
\]
It therefore suffices to prove
\begin{equation}
\label{eq:Kq-alpha-bound}
\cK_q^\circ(\balpha)
\ll
\cW_{w,h}
\begin{cases}
\displaystyle
\(1+\dfrac Xq\distZ{q\balpha}\)^{-1},
&  \text{if} \  q\balpha\notin\Z^4,\\[8pt]
\displaystyle
q/X,
&  \text{if} \  q\balpha\in\Z^4,
\end{cases}
\end{equation}
(with the implied constant depending on $B$ and $K$).

For $t\in\R$ and $\bxi\in\R^4$, the phase of
$I_w(t,\bxi)$ has the gradient $t\vu M_0+\bxi$. 

Assume that for some $L\geq0$ we have
\[
\|t\vu M_0+\bxi\|\geq L,
\qquad \vu\in\supp w.
\]
Then, by Corollary~\ref{cor:oscillatory-integral}~(i), we have
\begin{equation}
\label{eq:I-two-cost-frequency}
I_w(t,\bxi)
\ll 
\cW_{w,h}  \(1+(1+|t|)^{-1/2}L\)^{-5}.
\end{equation}

Without any condition on the gradient $t\vu M_0+\bxi$, 
by Corollary~\ref{cor:oscillatory-integral}~(ii) 
one has
\begin{equation}
\label{eq:I-two-cost-stationary}
I_w(t,\bxi) \ll \cW_{w,h} (1+|t|)^{-2}.
\end{equation}

For brevity, put
\begin{equation}
\label{eq:xiv}
\bxi_{\vv} = \frac Xq(q\balpha-\vv),  \qquad \vv\in\Z^4.
\end{equation}

In what follows, we repeatedly use the elementary estimates
\begin{equation}
\label{eq: lattice estim}
\#\{\vv\in\Z^4:\|\vv-\boldsymbol\vartheta\|\leq R\} \ll(1+R)^4, 
\end{equation}
and 
\begin{equation}
\label{eq: lattice sum}
\sum_{\substack{\vv\in\Z^4\\ \|\vv-\boldsymbol\vartheta\|\geq1/2}} \|\vv-\boldsymbol\vartheta\|^{-5} \ll 1,
\end{equation}
uniformly for $\boldsymbol\vartheta\in\R^4$ and $R\geq0$.

Set
\[
B_1=B+1 \mand c_0=(8B_1)^{-1}.
\] 
We first consider the case 
\begin{equation}
\label{eq:outer-t-range}
|t|\geq  c_0X/q.
\end{equation}

Following the idea of~\cite[Section~5.4]{StrSoVi26}, we call $\vv\in\Z^4$ {\it good\/} if
\[
\|\bxi_{\vv}\| = \frac Xq\|q\balpha-\vv\|\geq4B_1|t|
\]
and {\it bad\/} otherwise. 

Since $M_0$ is a signed permutation matrix,
$\|\vu M_0\|=\|\vu\|\leq2B$ on $\supp w$.  Hence, for a
good vector $\vv$ and any $\vu\in\supp w$,
\[
\|t\vu M_0+\bxi_{\vv}\|
\geq\frac12\|\bxi_{\vv}\|
=
\frac{X}{2q}\|q\balpha-\vv\|,
\]
where we recall that $\bxi_{\vv}$ is given by~\eqref{eq:xiv}.
Moreover,~\eqref{eq:outer-t-range} implies
$\|q\balpha-\vv\|\geq1/2$ for every good vector.  Therefore~\eqref{eq:I-two-cost-frequency}, 
taken with
\[
L=\frac{X}{2q}\|q\boldsymbol\alpha-\vv\|,
\]
and~\eqref{eq: lattice sum} give
\begin{equation}
\label{eq:outer-good-vectors-edge}
\begin{aligned}
\sum_{\substack{\vv\in\Z^4\\ \vv\ \mathrm{good}}}
|I_w(t,\bxi_{\vv})|
&\ll
\cW_{w,h}\frac{q^5(1+|t|)^{5/2}}{X^5}
\sum_{\substack{\vv\in\Z^4\\ \vv\ \mathrm{good}}}
\|q\balpha-\vv\|^{-5}\\
&\ll
\cW_{w,h} \frac{q^5(1+|t|)^{5/2}}{X^5}.
\end{aligned}
\end{equation}

By~\eqref{eq: lattice estim}, in the range~\eqref{eq:outer-t-range}, 
the number of bad vectors is
\[
O\(\(1+q|t|/X\)^4\) = O\(\(q|t|/X\)^4\).
\]
Thus~\eqref{eq:I-two-cost-stationary} 
(since we also have $|t| \gg 1$ in the range~\eqref{eq:outer-t-range}) yields
\begin{equation}
\label{eq:outer-bad-vectors-edge}
\sum_{\substack{\vv\in\Z^4\\ \vv\ \mathrm{bad}}}
|I_w(t,\bxi_{\vv})|
\ll
\cW_{w,h} \frac{q^4|t|^2}{X^4}.
\end{equation}
If $q\balpha\in\Z^4$, including the omitted resonant vector among the bad
vectors only enlarges this upper bound.

Returning to the definition of $\mathcal{K}_q^\circ(\boldsymbol{\alpha})$, we apply~\eqref{eq:outer-good-vectors-edge} and~\eqref{eq:outer-bad-vectors-edge} to bound the contribution from the range $|t| \geq c_0 X/q$. Making the substitution $y = q|t|/X$ and using $q \leq X$, we obtain
\begin{align*}
&\int_{|t|\geq c_0X/q}
\(1+\frac{q|t|}{X}\)^{-K}
\sideset{}{^\circ}\sum_{\vv\in\Z^4}
|I_w(t,\bxi_{\vv})|\,dt\\
&\qquad\qquad\ll 
\cW_{w,h} \biggl(
\(\frac qX\)^{3/2}
\int_{c_0}^{\infty}(1+y)^{-K+5/2}\,dy\\
& \qquad\qquad\qquad\qquad\qquad\qquad\qquad+
\frac qX\int_{c_0}^{\infty}y^2(1+y)^{-K}\,dy
\biggr), 
\end{align*}
where the implied constants may depend on $B$ and $K$. 

Since $K \geq 5$, both integrals on the right-hand side converge. 
Consequently,
\begin{equation}
\label{eq:Kq-outer-bound-edge}
\int_{|t|\geq c_0X/q}
\(1+\frac{q|t|}{X}\)^{-K}
\sideset{}{^\circ}\sum_{\vv\in\Z^4}
|I_w(t,\bxi_{\vv})|\,dt
\ll 
\cW_{w,h} \frac qX.
\end{equation}

It now remains to treat the case 
\[
|t|<c_0X/q.
\]

 Choose a nearest lattice point
$\vv_0\in\Z^4$ to $q\balpha$
(choosing arbitrarily  if there are several).  If $\vv\neq\vv_0$, then at least one coordinate of
$q\balpha-\vv$ has absolute value at least $1/2$.  Moreover, for
$\vu\in\supp w$, as before, we have $\| \vu\| \le 2B$ and hence, recalling the definition of 
$\bxi_{\vv}$ in~\eqref{eq:xiv}, we see that
\[
|t|\|\vu M_0\|\leq\frac{X}{4q} \le \frac{X}{2q}\|q\balpha-\vv\| = \frac{1}{2}\|\bxi_{\vv}\| , 
\]
from which we conclude that 
\[
\|t\vu M_0+\bxi_{\vv}\|
\geq \frac{1}{2}\|\bxi_{\vv}\|  = 
\frac{X}{2q}\|q\balpha-\vv\|.
\]
Using~\eqref{eq:I-two-cost-frequency} with  
\[
L=
\frac{X}{2q}
\|q\boldsymbol\alpha-\vv\|,
\]
the lattice-sum estimate in~\eqref{eq: lattice sum} and the bound $(1+q|t|/X)^{-K}\leq1$, we obtain
\begin{equation}
\begin{split}
\label{eq:Kq-inner-nonnearest-edge}
\int_{|t|<c_0X/q}
\sum_{\vv\neq\vv_0}
|I_w(t,\bxi_{\vv})|\,dt & \ll
\cW_{w,h} \frac{q^5}{X^5}
\int_{|t|<c_0X/q}(1+|t|)^{5/2}\,dt \\
&\ll
\cW_{w,h} \(\frac qX\)^{3/2}
\le 
\cW_{w,h} \frac qX.
\end{split}
\end{equation}

If $q\balpha\in\Z^4$, then $\vv_0=q\balpha$ is precisely the
omitted resonant vector, which is excluded from the sum in~\eqref{eq:Kq-inner-nonnearest-edge}.
 The bounds~\eqref{eq:Kq-outer-bound-edge} 
and~\eqref{eq:Kq-inner-nonnearest-edge} then prove the second case of~\eqref{eq:Kq-alpha-bound}.

Suppose now that $q\balpha\notin\Z^4$.
To bound the remaining $\vv_0$-term, we put
\[
\Delta=\distZ{q\balpha}
=\|q\balpha-\vv_0\|
\mand
\Xi=\frac Xq\Delta=\|\bxi_{\vv_0}\|.
\]
We may also extend its $t$-integral to all of $\R$.

If $\Xi \leq1$, then~\eqref{eq:I-two-cost-stationary} gives
\[
\int_{\R}|I_w(t,\bxi_{\vv_0})|\,dt
\ll \cW_{w,h} .
\]
Together with~\eqref{eq:Kq-outer-bound-edge} and~\eqref{eq:Kq-inner-nonnearest-edge}, 
this yields $\cK_q^\circ(\balpha)\ll \cW_{w,h} $, which establishes the first
case of~\eqref{eq:Kq-alpha-bound} for  $\Xi \leq1$.

It remains to consider $\Xi > 1$.    Choose
\[
c_1=(4B)^{-1}.
\]
Then, for $|t|\leq c_1\Xi $ and
$\vu\in\supp w$, we have
\[
\|t\vu M_0+\bxi_{\vv_0}\|
\geq \Xi-2B|t|
\geq\frac {1}{2} \Xi.
\]
It follows from~\eqref{eq:I-two-cost-frequency} upon taking $L=\Xi/2$ that
\begin{equation}
\label{eq:nearest-small-t-edge}
\begin{aligned}
\int_{|t|\leq c_1\Xi}|I_w(t,\bxi_{\vv_0})|\,dt
&\ll 
\cW_{w,h}  \Xi^{-5}\int_0^{c_1\Xi}(1+t)^{5/2}\,dt\\
&\ll 
\cW_{w,h}  \Xi^{-3/2}.
\end{aligned}
\end{equation}
On the complementary range,~\eqref{eq:I-two-cost-stationary} gives
\begin{equation}
\label{eq:nearest-large-t-edge}
\int_{|t|>c_1\Xi}|I_w(t,\bxi_{\vv_0})|\,dt
\ll 
\cW_{w,h}  \Xi^{-1}.
\end{equation}
Trivially, each coordinate of $q\balpha$ lies within $1/2$ of an integer. Therefore 
$\Delta\leq1$, and hence $q/X\leq \Xi^{-1}$.  
Combining~\eqref{eq:Kq-outer-bound-edge}, \eqref{eq:Kq-inner-nonnearest-edge}, \eqref{eq:nearest-small-t-edge} 
and~\eqref{eq:nearest-large-t-edge}, we conclude that
\[
\cK_q^\circ(\balpha)
\ll 
\cW_{w,h} \(\frac qX+\Xi^{-3/2}+\Xi^{-1}\)
\ll 
\cW_{w,h}  \Xi^{-1}.
\]
Since $\Xi>1$, this is equivalent to the first case of~\eqref{eq:Kq-alpha-bound}.

Substituting~\eqref{eq:Kq-alpha-bound} into~\eqref{eq:Tq-to-I-sum}  concludes the proof. 
\end{proof}

We emphasise that the implied constants in Lemma~\ref{lem:nonresonant-dual-frequencies}  are uniform in $X$, $q$, $\balpha$, $w$ and $h$
subject to the conditions above. 

We estimate the second moment of the frequency factor in
Lemma~\ref{lem:nonresonant-dual-frequencies} over vectors $\vb\in(\Z/m\Z)^4$. For an integer $m\geq1$ and $\vb\in(\Z/m\Z)^4$, define
\begin{equation}
\label{eq:K_def}
    K_{q,m}(\vb)=
\begin{cases}
\displaystyle
\left(1+\frac Xq\distZ{q\vb/m}\right)^{-1},
&q\vb\not\equiv\vec0\bmod m,\\[6pt]
q/X,&q\vb\equiv\vec0\bmod m.
\end{cases}
\end{equation}

\begin{lem}  
\label{lem:frequency-grid-second-moment}
Let $X\geq1$ and $q$ be an integer with $1\leq q\leq X$.
Then, uniformly in $X$, $q$ and $m$, we have
\[
\sum_{\vb\in(\Z/m\Z)^4}K_{q,m}(\vb)^2
\ll m^4\left(\frac qX\right)^2.
\]
\end{lem}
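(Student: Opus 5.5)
We need $\sum_{\vb \bmod m} K^2 \ll m^4 (q/X)^2$. The plan is to reduce to the additive group generated by $q/m$. Put $g=\gcd(q,m)$, $m'=m/g$, $q'=q/g$, so that $\gcd(q',m')=1$. As $\vb$ runs over $(\Z/m\Z)^4$, the residue $q\vb \bmod m$ equals $g\,(q'\vb \bmod m')$. Hence the vector $q\vb/m \bmod \Z^4$ equals $q'\vb/m' \bmod \Z^4$, and it depends only on $\vb \bmod m'$. Moreover, $\vb\mapsto q'\vb$ permutes $(\Z/m'\Z)^4$. Each class modulo $m'$ has $g^4$ lifts modulo $m$. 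Therefore
\[
\sum_{\vb\in(\Z/m\Z)^4}K_{q,m}(\vb)^2
= g^4\Bigl( (q/X)^2 + \sum_{\vc\in(\Z/m'\Z)^4,\ \vc\ne \vec 0} \Bigl(1+\frac Xq\distZ{\vc/m'}\Bigr)^{-2}\Bigr).
\]
The resonant term $\vc=\vec0$ contributes $g^4(q/X)^2\le m^4(q/X)^2$, which is acceptable.

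For the remaining terms, represent each $\vc$ by its symmetric lift $\vn\in\Z^4$ with coordinates in $(-m'/2,m'/2]$. Then $\distZ{\vc/m'}=\|\vn\|/m'$. The sum is therefore bounded by
\[
\sum_{\vec0\ne\vn\in\Z^4}\Bigl(1+\frac{X\|\vn\|}{qm'}\Bigr)^{-2},
\]
restricted to $\|\vn\|\le 2m'$, and I would split it dyadically in $\|\vn\|\asymp R$. By~\eqref{eq: lattice estim} there are $O(R^4)$ lattice points with $\|\vn\|\asymp R$. Set $Y=qm'/X$.

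\begin{itemize}
\item The shells with $R\le Y$ contribute at most $O(Y^4)$.
\item The shells with $Y<R\le 2m'$ contribute $\ll \sum_R R^4 (Y/R)^2 = Y^2\sum_R R^2 \ll Y^2 m'^2$.
\end{itemize}

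Since $q\le X$ we have $Y\le m'$, so the total is $\ll Y^2m'^2 = m'^4(q/X)^2$. Multiplying by $g^4$ gives $m^4(q/X)^2$, as required.

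The only delicate point is the identification of $\distZ{\cdot}$ with the Euclidean length of the symmetric representative. This holds because $\distZ{\vy}$ is the Euclidean distance from $\vy$ to $\Z^4$, and the nearest lattice point is obtained by rounding each coordinate. The convergence of the dyadic sum at the upper end also needs care. The exponent $2$ versus the dimension $4$ means the large shells dominate, so the restriction $\|\vn\|\le 2m'$ is essential: it caps the sum at $Y^2m'^2$ instead of letting it diverge. All bounds are uniform in $X$, $q$ and $m$.
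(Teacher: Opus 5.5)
Your proposal is correct and follows essentially the paper's route. You reduce via $g=\gcd(q,m)$ to the grid $m'^{-1}\Z^4/\Z^4$ with multiplicity $g^4$, use centred representatives so that $\distZ{\vc/m'}=\|\vn\|/m'$, and bound the resulting lattice sum over a box of side $m'$ by $\ll m'^4(q/X)^2$. The one small difference is that the paper applies $(1+t)^{-1}\le t^{-1}$ to every term and sums $\|\vc\|^{-2}$ over sup-norm shells, so your split at $Y=qm'/X$ is harmless but unnecessary.
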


\begin{proof} 
Put $g=\gcd(q,m)$, $q_*=q/g$ and $m_*=m/g$.
Since $q_*$ and $m_*$ are coprime, as $\vb$ varies modulo $m$, the point
$q\vb/m\bmod\Z^4$ runs over the grid $m_*^{-1}\Z^4/\Z^4$,
each point occurring $g^4$ times. 

The result is trivial if $m_* =1$, as we  have $q\vb\equiv\vec0\bmod m$ 
for all $\vb\in(\Z/m\Z)^4$. So we now assume that $m_*\ge 2$. 

Let  $\cC_{m_*}=\(\Z\cap[-m_*/2,m_*/2)\)^4$ be the set of
centred representatives for $(\Z/m_*\Z)^4$.
For each $\vb\in(\Z/m\Z)^4$, let $\vc\in \cC_{m_*}$ be the unique
representative of $q_*\vb\bmod m_*$. Since $q/m=q_*/m_*$, we have
\[
\distZ{q\vb/m}=\frac{\|\vc\|}{m_*}.
\]
For $\vc\ne\vec0$, corresponding to the first case of~\eqref{eq:K_def}, we have
\[
K_{q,m}(\vb)
\leq\frac qX\frac{m_*}{\|\vc\|}.
\]

We see from~\eqref{eq:K_def} that  each of the $g^4$ vectors
$\vb\in(\Z/m\Z)^4$ satisfying $q\vb\equiv\vec0\bmod m$ contributes
$(q/X)^2$ to the desired sum.  

Note that
\[
\sum_{\substack{\vc\in \cC_{m_*} \\\vc\ne\vec0}} \frac1{\|\vc\|^2}
=\sum_{1 \le r \le m_* / 2}   \sum_{\substack{\vc\in \cC_{m_*} \\\|\vc\|_{\infty} = r}}
\frac1{\|\vc\|^2}
\ll \sum_{1 \le r \le m_* / 2} r
\ll  m_*^2.
\]
Consequently,
\[
\sum_{\vb\in(\Z/m\Z)^4}K_{q,m}(\vb)^2
\ll g^4\left(\frac{q}{X}\right)^2+ g^4 \sum_{\substack{\vc\in \cC_{m_*}\\\vc\ne\vec0}}\(\frac{q}{X}\frac{m_*}{\|\vc\|}\)^2 \ll m^4\left(\frac qX\right)^2, 
\]
which proves the result.
\end{proof}

\section{Evaluation of the resonant contribution}
\label{sec:resonant}

\subsection{Chinese remaindering}
We now turn to the resonant dual frequency.  For a rational twist
$\balpha=\vb/s$, with $s=\fs(\balpha)$, the resonant vector
$\vv=q\balpha \in\Z^4$ occurs precisely when $s\mid q$ and contributes to
the main term.   We first record the Chinese Remainder Theorem for 
complete exponential sums, which is essentially~\cite[Equation~(12.21)]{IwKow04}.

We use $v_p(t)$ to denote the $p$-adic valuation of $t \in \Z$.
 
\begin{lemma}
\label{lem:resonant-crt-factorization}
Let $\balpha=\vb/s\in\Q^4$, where $s=\fs(\balpha)$. For each prime
$p$, put
\[
a_p=v_p(s).
\]
\begin{itemize}
\item If $a_p>0$, put
\[
s_p=\frac{s}{p^{a_p}},
\]
choose $\overline{s}_p\in\Z$ such that
$
s_p\overline{s}_p\equiv1\bmod{p^{a_p}}
$, 
and define
\[
\vb_p\equiv\overline{s}_p\vb\bmod{p^{a_p}},
\qquad
\vb_p\in(\Z/p^{a_p}\Z)^4.
\]

\item 
If $a_p=0$, put $\vb_p=\vec{0}$ and interpret the corresponding
additive character as trivial.
\end{itemize}
Let $q\geq1$. Then
\[
q\balpha\in\Z^4
\quad\Longleftrightarrow\quad
s\mid q, 
\]
and if $s\mid q$,  then, with $\nu_p=v_p(q)$, we have
\[
S(q,q\balpha)
=
\prod_{p\mid q}
S\left(p^{\nu_p},p^{\nu_p-a_p}\vb_p\right),
\]
where $p^{\nu_p-a_p}\vb_p$ is naturally regarded as a vector modulo
$p^{\nu_p}$.

\end{lemma}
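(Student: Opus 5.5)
The plan has two parts. First comes the equivalence $q\balpha\in\Z^4 \Leftrightarrow s\mid q$. Then comes the multiplicativity of $S(q,\vv)$ under the Chinese Remainder Theorem, specialised to $\vv=q\balpha$.

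For the equivalence, I would use the minimality of $s=\fs(\balpha)$. The set of integers $t$ with $t\balpha\in\Z^4$ is a subgroup of $\Z$, since it is closed under differences. It contains $s$, and $s$ is its least positive element, so the subgroup equals $s\Z$. Hence $q\balpha\in\Z^4$ if and only if $s\mid q$.

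For the factorisation, assume $s\mid q$, so $\vv=q\balpha=(q/s)\vb\in\Z^4$. First I would prove the twisted multiplicativity of $S(q,\vv)$. If $q=q_1q_2$ with $\gcd(q_1,q_2)=1$, write $\vx=q_2\overline{q_2}\vx_1+q_1\overline{q_1}\vx_2$ with $\vx_i \bmod q_i$, where $\overline{q_2}$ is the inverse of $q_2$ modulo $q_1$ and vice versa. Similarly write $c=q_2\overline{q_2}c_1+q_1\overline{q_1}c_2$, with $c_i$ running over reduced residues modulo $q_i$. The phase $\e_q(cQ(\vx)+\vv\cdot\vx)$ then splits as
\[
\e_{q_1}\(\overline{q_2}(c_1Q(\vx_1)+\vv\cdot\vx_1)\)\,\e_{q_2}\(\overline{q_1}(c_2Q(\vx_2)+\vv\cdot\vx_2)\).
\]
Substituting $c_1\mapsto q_2c_1$ (a bijection on reduced residues) and $\vx_1\mapsto$ itself gives
\[
S(q,\vv)=S(q_1,\overline{q_2}\vv)\,S(q_2,\overline{q_1}\vv),
\]
which is the standard form of \cite[Equation~(12.21)]{IwKow04}. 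The only point to check is that the constant term $-1$ in $Q$ does not spoil this. It does not: $cQ(\vx)$ modulo $q_i$ reduces to $c_iQ(\vx_i)$, since $Q$ has integer coefficients. Iterating over the prime power factors $p^{\nu_p}\parallel q$ gives
\[
S(q,\vv)=\prod_{p\mid q}S\(p^{\nu_p},\overline{(q/p^{\nu_p})}\,\vv\),
\]
with the inverse taken modulo $p^{\nu_p}$.

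It remains to identify $\overline{(q/p^{\nu_p})}\,\vv$ modulo $p^{\nu_p}$ with $p^{\nu_p-a_p}\vb_p$. Write $q/s=p^{\nu_p-a_p}r$ with $p\nmid r$, which is possible because $s\mid q$ gives $\nu_p\ge a_p$. Then
\[
q/p^{\nu_p}=r\,s_p \qquad\text{and}\qquad \vv=p^{\nu_p-a_p}r\vb.
\]
Hence $\overline{(q/p^{\nu_p})}\,\vv\equiv p^{\nu_p-a_p}\,\overline{s_p}\,\vb \pmod{p^{\nu_p}}$. This depends only on $\overline{s_p}\vb$ modulo $p^{a_p}$, which is $\vb_p$. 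In the case $a_p=0$ we have $\vb_p=\vec0$. Here we need $\vv\equiv\vec0\pmod{p^{\nu_p}}$ up to this scaling; it holds because then the multiplier $p^{\nu_p}\overline{s_p}\vb$ vanishes modulo $p^{\nu_p}$. This matches the convention of a trivial character.

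The main obstacle is bookkeeping. One has to be careful that the inverse $\overline{s_p}$ is taken modulo $p^{a_p}$ rather than $p^{\nu_p}$, and that the scaling by $p^{\nu_p-a_p}$ makes this ambiguity harmless. The CRT step itself is routine.
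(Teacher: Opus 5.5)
Your proposal is correct and is essentially the paper's route: the paper gives no separate proof and simply records the lemma as the standard Chinese Remainder twisted multiplicativity, essentially \cite[Equation~(12.21)]{IwKow04}, which is exactly what you derive. Your subgroup argument for $q\balpha\in\Z^4 \Leftrightarrow s\mid q$, and your check that the scaling by $p^{\nu_p-a_p}$ makes the choice of inverse (modulo $p^{a_p}$ versus $p^{\nu_p}$) irrelevant, supply the details the paper leaves implicit. This includes the case $a_p=0$, where $\vv\equiv\vec0 \bmod p^{\nu_p}$.
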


In the notation of
Lemma~\ref{lem:resonant-crt-factorization}, the additive character
decomposes as
\begin{equation}
\label{eq:character-crt-decomposition}
\e_s(\vb\cdot\vx)
=
\prod_{p\mid s}
\e_{p^{a_p}}(\vb_p\cdot\vx_p),
\end{equation}
where $\vx_p$ denotes the reduction of
$\vx\in(\Z/s\Z)^4$ modulo $p^{a_p}$. When $s=1$, the empty product
is interpreted as $1$.

We also observe the following identity.

\begin{lemma} 
\label{lem: Spab}
For  any prime $p$,  a vector   $\vb = (b_1, \ldots, b_4) \in\Z^4$ with $\gcd (b_1, \ldots, b_4, p)  =1$ and an integer $a \ge 1$, we have 
\[
S\(p^{a},\vb\) = p^a \sum_{g\in \cG_{p^a}}
\e_{p^{a}}(\vb\cdot g).
\]
\end{lemma}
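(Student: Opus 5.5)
The plan is to open up the sum over $c$ as a Ramanujan sum and then show that the term it leaves behind, besides the main one, cancels completely because $\vb$ is primitive modulo $p$. Write $q=p^a$. By~\eqref{eq:S-q-v-def}, interchanging the order of summation gives
\[
S(p^a,\vb)=\sum_{\vx\bmod p^a}\e_{p^a}(\vb\cdot\vx)\, c_{p^a}\(Q(\vx)\),
\qquad
c_{p^a}(n)=\sideset{}{^*}\sum_{c\bmod p^a}\e_{p^a}(cn).
\]
I would then use the classical evaluation of the Ramanujan sum at a prime power:
\[
c_{p^a}(n)=p^a\one_{p^a\mid n}-p^{a-1}\one_{p^{a-1}\mid n}.
\]
For $a=1$ the second indicator is identically $1$.

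Substituting this, and recalling that $Q(\vx)=\det\vx-1$, splits $S(p^a,\vb)$ into two pieces:
\[
S(p^a,\vb)=p^a\sum_{\substack{\vx\bmod p^a\\ \det\vx\equiv1\bmod p^a}}\e_{p^a}(\vb\cdot\vx)
-p^{a-1}\sum_{\substack{\vx\bmod p^a\\ \det\vx\equiv1\bmod p^{a-1}}}\e_{p^a}(\vb\cdot\vx).
\]
The first sum is exactly $\sum_{g\in\cG_{p^a}}\e_{p^a}(\vb\cdot g)$, which is the claimed right-hand side. It therefore suffices to show that the second sum vanishes.

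This vanishing is the key step, and it is done by exhibiting a translation invariance. For $\vy\in(\Z/p\Z)^4$ I would compare $\det(\vx+p^{a-1}\vy)$ with $\det\vx$.
\begin{itemize}
\item If $a\ge2$, then
\[
\det(\vx+p^{a-1}\vy)=\det\vx+p^{a-1}(\vx M_0)\cdot\vy+p^{2a-2}\det\vy\equiv\det\vx\bmod p^{a-1}.
\]
\item If $a=1$, the condition $\det\vx\equiv1\bmod p^{0}$ is vacuous.
\end{itemize}
In both cases the summation set is a union of cosets of $p^{a-1}(\Z/p^a\Z)^4$. Summing over each coset therefore produces the factor
\[
\sum_{\vy\bmod p}\e_p(\vb\cdot\vy),
\]
which is $0$ because $\vb\not\equiv\vec0\bmod p$ by the hypothesis $\gcd(b_1,\ldots,b_4,p)=1$. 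This kills the second term and proves the identity.

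There is no serious obstacle. The only point needing care is the case $a=1$, where the second Ramanujan term is not restricted by any divisibility condition; there the orthogonality of additive characters over all of $(\Z/p\Z)^4$ applies directly.
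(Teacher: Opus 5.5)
Your proof is correct and is essentially the paper's argument. The Ramanujan-sum evaluation $c_{p^a}(n)=p^a\one_{p^a\mid n}-p^{a-1}\one_{p^{a-1}\mid n}$ just repackages the paper's split $S(p^a,\vb)=\Sigma_1-\Sigma_2$ (the sum over all $c\bmod p^a$ minus the sum over $c\equiv0\bmod p$), and you kill the second term the same way, by translating $\vx$ by $p^{a-1}\vz$ and using $\sum_{\vz\bmod p}\e_p(\vb\cdot\vz)=0$.
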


\begin{proof} We have 
\begin{equation}
\label{eq:S S1S2}
S\(p^{a},\vb\)  = \Sigma_1 - \Sigma_2,
\end{equation}
where 
\begin{equation}
\label{eq:S1}
 \Sigma_1
=\sum_{c \bmod p^a }  \sum_{\vx\bmod p^a} 
\e_{p^a}\(cQ(\vx)+\vb\cdot\vx\) = p^a \sum_{g\in \cG_{p^a}}
\e_{p^{a}}(\vb\cdot g),
\end{equation}
and 
\begin{align*}
\Sigma_2 & =  \sum_{c=0}^{p^{a-1} -1}  \sum_{\vx\bmod p^a}
\e_{p^a}\(cpQ(\vx)+\vb\cdot\vx\)\\ 
& = \sum_{c \bmod p^{a-1} }  \sum_{\vx\bmod p^a}
\e_{p^{a-1}}\(cQ(\vx)\) \e_{p^a}\(\vb\cdot\vx\).
\end{align*}
To conclude the proof, it is enough to show that $\Sigma_2 = 0$.
For $a=1$, this is obvious as we have a complete sum over $\vx \in \(\Z/p\Z\)^4$. 
For $a \ge 2$, we write $\vx = \vy + p^{a-1} \vz$ with   $\vy \in \(\Z/p^{a-1}\Z\)^4$
and $\vz \in \(\Z/p\Z\)^4$. Then
\begin{align*}
\Sigma_2 
 & = \sum_{c \bmod p^{a-1} }   \sum_{\vy \bmod p^{a-1}} \\
& \qquad \qquad \qquad \quad \sum_{\vz \bmod p}
\e_{p^{a-1}}\(cQ( \vy + p^{a-1} \vz)\) \e_{p^a}\(\vb\cdot\( \vy + p^{a-1} \vz\)\)\\
 & = \sum_{c \bmod p^{a-1} }   \sum_{\vy \bmod p^{a-1}} \e_{p^{a-1}}\(cQ( \vy)\) \e_{p^a}\(\vb\cdot \vy \) \sum_{\vz \bmod p}
\e_{p}\(\vb\cdot\ \vz\) = 0. 
\end{align*} 
Recalling~\eqref{eq:S S1S2} and~\eqref{eq:S1}, we conclude the proof. 
\end{proof}

\subsection{Singular series}
We now evaluate a  series involving the sums $S(q,q\balpha)$ 
which contributes to the singular product~\eqref{eq:sq-singular-product-intro}.

\begin{lemma} 
\label{lem:resonant-singular-series}
Let $\balpha=\vb/s\in\Q^4$, where $s=\fs(\balpha)$.  Then the series
\[
\mathfrak S(\balpha)
=
\sum_{\substack{q\geq1\\q\balpha\in\Z^4}}
\frac{S(q,q\balpha)}{q^4}
\]
converges absolutely and satisfies
\[
\mathfrak S(\balpha)
=
\frac{\lambda(\balpha)}{\zeta(2)}, 
\]
where $\lambda(\balpha)$ is  given by~\eqref{eq:lambda-s-b}.
\end{lemma}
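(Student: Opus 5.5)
The plan is to show that $q\mapsto S(q,q\balpha)/q^4$ (on $s\mid q$) factors over primes, compute each local factor exactly, and reassemble.

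\emph{Absolute convergence} is immediate from Lemma~\ref{lem:complete-exponential-sum}. It gives $|S(q,q\balpha)|q^{-4}\le\tau(q)q^{-3/2}$, and $\sum_q\tau(q)q^{-3/2}<\infty$ by~\eqref{eq: tau}.

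\emph{Factorisation.} By Lemma~\ref{lem:resonant-crt-factorization}, the condition $q\balpha\in\Z^4$ is equivalent to $s\mid q$. For such $q$ the summand factors as $\prod_{p\mid q}S(p^{\nu_p},p^{\nu_p-a_p}\vb_p)p^{-4\nu_p}$. Absolute convergence then lets us write
\[
\mathfrak S(\balpha)=\prod_p E_p,\qquad E_p=\sum_{\nu\ge a_p}\frac{S(p^\nu,p^{\nu-a_p}\vb_p)}{p^{4\nu}},
\]
where the factor $\nu=0$ is interpreted as $1$.

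\emph{Primes $p\nmid s$.} Here $\vb_p=\vec0$. The identity in the proof of Lemma~\ref{lem:complete-exponential-sum} gives $S(q,\vec0)=q^2\sideset{}{^*}\sum_{c\bmod q}\e_q(-c)=q^2\mu(q)$, a Ramanujan sum. Hence only $\nu\le1$ survives, and $E_p=1-p^{-2}$.

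\emph{Primes $p\mid s$, with $a=a_p\ge1$.} Minimality of $s$ means that $\vb_p$ has a coordinate coprime to $p$.

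For the term $\nu=a$, Lemma~\ref{lem: Spab} gives
\[
S(p^a,\vb_p)=p^a\sum_{g\in\cG_{p^a}}\e_{p^a}(\vb_p\cdot g).
\]

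For $\nu>a$, I would show that $S(p^\nu,p^{\nu-a}\vb_p)=0$. Write it as $\Sigma_1-\Sigma_2$ as in the proof of Lemma~\ref{lem: Spab}, with the character $\e_{p^a}(\vb_p\cdot\vx)$.

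\begin{itemize}
\item $\Sigma_1$ equals $p^\nu\sum_{g\in\cG_{p^\nu}}\e_{p^a}(\vb_p\cdot g)$. Since $\SL_2$ is smooth, each point of $\cG_{p^j}$ has exactly $p^3$ lifts to $\cG_{p^{j+1}}$; this is the Hensel count behind~\eqref{eq:order_Gm}. So $\Sigma_1=p^{\nu+3(\nu-a)}\Sigma_a$, where $\Sigma_a=\sum_{g\in\cG_{p^a}}\e_{p^a}(\vb_p\cdot g)$.
\item In $\Sigma_2$ the summand depends only on $\vx\bmod p^{\nu-1}$, because $a\le\nu-1$. This gives a factor $p^4$ times the analogous sum at level $\nu-1$, that is, $\Sigma_2=p^4\,p^{\nu-1}p^{3(\nu-1-a)}\Sigma_a$.
\end{itemize}

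These two expressions coincide, so the difference vanishes. Consequently
\[
E_p=p^{-3a}\Sigma_a=\frac{\#\cG_{p^a}}{p^{3a}}\,\lambda_p=(1-p^{-2})\lambda_p
\]
by~\eqref{eq:order_Gm}, where $\lambda_p=\Sigma_a/\#\cG_{p^a}$.

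\emph{Reassembly.} The Chinese Remainder Theorem gives $\cG_s\cong\prod_{p\mid s}\cG_{p^{a_p}}$. Together with the character decomposition~\eqref{eq:character-crt-decomposition}, this yields $\lambda(\balpha)=\prod_{p\mid s}\lambda_p$. Hence
\[
\mathfrak S(\balpha)=\lambda(\balpha)\prod_p(1-p^{-2})=\frac{\lambda(\balpha)}{\zeta(2)}.
\]

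The main obstacle is the vanishing for $\nu>a$. It requires careful bookkeeping of the lifting counts, and the argument fails if the character is not well defined modulo $p^{\nu-1}$. This is exactly why the condition $\nu-1\ge a$ is essential and why the case $\nu=a$ is handled separately by Lemma~\ref{lem: Spab}.
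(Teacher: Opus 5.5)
Your proposal is correct and follows essentially the same route as the paper: your splitting $S=\Sigma_1-\Sigma_2$ for $\nu>a_p$ is exactly the paper's Ramanujan-sum identity $p^{-4\nu}S(p^\nu,p^{\nu-a_p}\vb_p)=B_{p,\nu}-B_{p,\nu-1}$. The vanishing comes from the equal fibre sizes of $\cG_{p^\nu}\to\cG_{p^{a_p}}$, and the $\nu=a_p$ term (via Lemma~\ref{lem: Spab}) and the CRT reassembly are handled identically. The only minor difference is at $p\nmid s$, where you use $S(q,\vec0)=q^2\mu(q)$ from the Kloosterman reduction instead of the paper's telescoping $1+B_{p,1}-B_{p,0}$; both give the factor $1-p^{-2}$.
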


\begin{proof}
By Lemma~\ref{lem:complete-exponential-sum} and the  bound~\eqref{eq: tau} on $\tau(q)$, we have 
\[
\sum_{\substack{q\geq1\\q\balpha\in\Z^4}}
\frac{|S(q,q\balpha)|}{q^4}
\ll
\sum_{q\geq1}\frac{\tau(q)}{q^{3/2}}
<\infty,
\]
so the series converges absolutely.

By Lemma~\ref{lem:resonant-crt-factorization}, the condition $q\boldsymbol{\alpha} \in \mathbb{Z}^4$ is equivalent to $s \mid q$. Consequently, if $a_p = v_p(s)$, then the exponent $\nu_p = v_p(q)$ satisfies $\nu_p \geq a_p$. Lemma~\ref{lem:resonant-crt-factorization} also provides the corresponding factorization of $S(q, q\boldsymbol{\alpha})$ over prime powers.
Absolute convergence therefore gives
\begin{equation}
\label{eq:S and Sp}
\mathfrak S(\balpha)
=
\prod_p \mathfrak S_p(\balpha)
\end{equation}
with 
\begin{equation}
\label{eq:Sp}
    \mathfrak S_p(\balpha) =
\sum_{\nu\geq a_p} p^{-4\nu} S\(p^\nu,p^{\nu-a_p}\vb_p\),
\end{equation}
where, when $a_p=0$, the term with $\nu=0$ is interpreted as $1$.

 If $a_p>0$, then the minimality of $s=\fs(\balpha)$ implies that the
components of $\vb_p$ are not all divisible by $p$.

For $\nu\geq a_p$, define
\begin{equation}
\label{eq:B-p-nu-def}
B_{p,\nu}
=
p^{-3\nu}
\sum_{g\in \cG_{p^\nu}}
\e_{p^{a_p}}(\vb_p\cdot g).
\end{equation}
Here, by~\eqref{eq:def_G_m},
\[
\cG_{p^\nu}=\SL_2(\Z/p^\nu\Z),
\]
and in the dot product $\vb_p\cdot g$, the matrix $g$ is identified
with the vector of its four entries. When $a_p=0$, the additive
character in~\eqref{eq:B-p-nu-def} is interpreted as trivial. 

For $m\geq1$ and $n\in\Z$, let
\[
c_m(n)=\sideset{}{^*} \sum_{r\bmod m}\e_m(rn) 
=\mu\(m/\gcd(m,n)\)\frac{\varphi(m)}{\varphi\(m/\gcd(m,n)\)}
\]
denote the Ramanujan sum; see~\cite[Equation~(3.3)] {IwKow04}.  For every $\nu\geq1$, a direct computation gives
\[
c_{p^\nu}(n)
=
p^\nu\one_{p^\nu\Z}(n)
-
p^{\nu-1}\one_{p^{\nu-1}\Z}(n).
\]
Note that for $\nu\geq\max\{1,a_p\}$, the definition~\eqref{eq:S-q-v-def}  can be rewritten as 
\[
S(p^\nu,p^{\nu-a_p}\vb_p)
=
\sum_{\vx\bmod p^\nu}
\e_{p^{a_p}}(\vb_p\cdot\vx)c_{p^\nu}(Q(\vx)).
\]
Furthermore, if $\nu > a_p$, then 
\begin{align*}
S(p^\nu,p^{\nu-a_p}\vb_p)
& =
p^\nu \sum_{\vx\bmod p^\nu}
\e_{p^{a_p}}(\vb_p\cdot\vx)\one_{p^\nu\Z}  (Q(\vx))\\
& \qquad \qquad - p^{\nu-1}   \sum_{\vx\bmod p^\nu}
\e_{p^{a_p}}(\vb_p\cdot\vx)\one_{p^{\nu-1}\Z}  (Q(\vx))\\
& =
p^{4\nu} B_{p,\nu} - p^{\nu-1}   \sum_{\vx\bmod p^\nu}
\e_{p^{a_p}}(\vb_p\cdot\vx)\one_{p^{\nu-1}\Z}  (Q(\vx)).
\end{align*}
Since $\nu-1 \ge a_p$,  we also have 
\begin{align*}
\sum_{\vx\bmod p^\nu}
\e_{p^{a_p}}(\vb_p\cdot\vx)&\one_{p^{\nu-1}\Z}  (Q(\vx)) \\
&
= p^4 \sum_{\vx\bmod p^{\nu-1}}
\e_{p^{a_p}}(\vb_p\cdot\vx)\one_{p^{\nu-1}\Z}  (Q(\vx))\\
& = p^4 p^{3(\nu-1)}  B_{p,\nu-1} = p^{3\nu+1}  B_{p,\nu-1}.
\end{align*}
Hence, if $\nu > a_p$, we have 
\begin{equation}
\label{eq: S B-B}
p^{-4\nu}S(p^\nu,p^{\nu-a_p}\vb_p)
=
B_{p,\nu}-B_{p,\nu-1}.
\end{equation}

It is now convenient to extend the definition of  $\lambda(\balpha)$ in~\eqref{eq:lambda-s-b}.
For any integer $t\geq 1$
and any vector $\vb\in (\Z/t\Z)^4$ (not necessarily arising from $\balpha$),
define
\begin{equation}
\label{eq: kappa tb}
\kappa_t(\vb)
=
\frac{1}{\#\cG_t}\sum_{g\in\cG_t}\e_t(\vb\cdot g).
\end{equation}

Next, we consider separately the cases $a_p>0$ ($p\mid s$) and
$a_p=0$ ($p\nmid s$).

For $\nu\geq a_p>0$, the reduction map
\[
\cG_{p^\nu}\longrightarrow \cG_{p^{a_p}}
\]
is surjective and, by~\eqref{eq:order_Gm}, every fibre has cardinality
$p^{3(\nu-a_p)}$. Hence
\begin{equation}
\label{eq: B const}
B_{p,\nu}
=
p^{-3a_p}\sum_{g\in\cG_{p^{a_p}}}
\e_{p^{a_p}}(\vb_p\cdot g)
=
(1-p^{-2})\kappa_{p^{a_p}}(\vb_p).
\end{equation}
In particular, $B_{p,\nu}$ is independent of $\nu$. Thus,
by~\eqref{eq: S B-B},
\[
p^{-4\nu}S\(p^\nu,p^{\nu-a_p}\vb_p\)=0
\qquad (\nu>a_p>0).
\]

Since all terms with $\nu>a_p$ vanish, Lemma~\ref{lem: Spab},
applied with $a=a_p$, gives
\[
\mathfrak S_p(\balpha)
=
p^{-4a_p}S\(p^{a_p},\vb_p\)
=
B_{p,a_p}.
\]
Consequently,~\eqref{eq: B const} yields
\begin{equation}
\label{eq:SingSer ap>0}
\mathfrak S_p(\balpha)
=
(1-p^{-2})\kappa_{p^{a_p}}(\vb_p).
\end{equation}

We now consider the case $a_p=0$, that is, $p\nmid s$. In this case,
\[
\mathfrak S_p(\balpha)
=
\sum_{\nu\geq0}
p^{-4\nu}S \(p^\nu,p^\nu\vb_p\).
\]
The term corresponding to $\nu=0$ is $1$. Likewise, since
$\cG_1$ consists of a single element, we have
\[
B_{p,0}=1.
\]

For every $\nu\geq1$, the character in~\eqref{eq:B-p-nu-def} is trivial. Hence,
by~\eqref{eq:order_Gm},
\[
B_{p,\nu}
=
p^{-3\nu}\#\cG_{p^\nu}
=
1-p^{-2}.
\]
In particular,
\[
B_{p,\nu}-B_{p,\nu-1}=0
\qquad (\nu\geq2).
\]
Substituting~\eqref{eq: S B-B} into~\eqref{eq:Sp}, we therefore obtain 
\begin{equation}
\label{eq:SingSer ap=0}
\mathfrak S_p(\balpha)
=
1+B_{p,1}-B_{p,0}
=
1-p^{-2}.
\end{equation}

Combining~\eqref{eq:S and Sp} with~\eqref{eq:SingSer ap>0} 
and~\eqref{eq:SingSer ap=0},  we obtain
\begin{align*}
\mathfrak S(\balpha)
&=
\prod_{p\nmid s}(1-p^{-2})
\prod_{p^{a_p}\| s}\((1-p^{-2})\kappa_{p^{a_p}}(\vb_p)\)\\
& =
\prod_p(1-p^{-2}) 
\prod_{p^{a_p}\| s}\kappa_{p^{a_p}}(\vb_p) = \zeta(2)^{-1} \prod_{p^{a_p}\| s}\kappa_{p^{a_p}}(\vb_p) ,
\end{align*}
where $a_p=v_p(s)$ and $\zeta$ denotes the Riemann zeta function.

By the Chinese Remainder Theorem (more precisely, using~\eqref{eq:character-crt-decomposition}),
we obtain  
\[
\kappa_s(\vb) =
\prod_{p^{a_p}\| s}\kappa_{p^{a_p}}(\vb_p)
\]
 (which is  a full analogue of Lemma~\ref{lem:resonant-crt-factorization}). 
Therefore
\[
\mathfrak S(\balpha)
=
\zeta(2)^{-1}\kappa_s(\vb) = \zeta(2)^{-1} \lambda(\balpha),
\]
as required.
\end{proof}

In particular,  we see from~\eqref{eq:lambda-s-b} that $\lambda(\mathbf{0})  =1$ and thus Lemma~\ref{lem:resonant-singular-series} yields
\begin{equation}
\label{eq:Sa S0}
\fS(\balpha)=\lambda(\balpha)\mathfrak S(\vec{0}).
\end{equation}

\subsection{Asymptotic evaluation of  the resonant contribution}

We now evaluate the resonant contribution in the
Poisson expansion~\eqref{eq:delta-poisson-formula}, namely
\[
\cM_w(X;\balpha)
=
X^2\sum_{\substack{q\leq X\\q\balpha\in\Z^4}}
\frac{S(q,q\balpha)}{q^4}J_{w,q}(X),
\]
where
\[
J_{w,q}(X)
=
X^2\int_{\R}
p_{q,X}(z)I_w(zX^2,\vec{0})\,dz.
\]
Here $\vec{0}\in\R^4$ is the zero vector. 

 We recall our definitions of $N_w(X)$ in~\eqref{eq:Nw} and of $ \cW_{w,h}$ in~\eqref{eq:Whw}.

\begin{lem} 
\label{lem:rational-twisted-smooth-count} Assume that $B \ge 1$ is fixed.
Let $X \ge 1$ and  let $w\in \cD^\infty(\R^4)$ be supported in
$[-B,B]^4$. Then for $\balpha\in\Q^4$, with
$s=\fs(\balpha)$, for every $0<h\leq1$, we have 
 \[
\left| \cM_w(X;\balpha)- \lambda(\balpha)N_w(X) \right| \le \cW_{w,h}   X^{3/2+o(1)}.
\] 
\end{lem}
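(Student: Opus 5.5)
The plan is to compare $\cM_w(X;\balpha)$ with $\lambda(\balpha)$ times the untwisted resonant term $\cM_w(X;\vec 0)$, and then to identify $\cM_w(X;\vec 0)$ with $N_w(X)$ by way of Lemma~\ref{lem:T approx} applied at $\balpha=\vec 0$.

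\textbf{Step 1: estimate $J_{w,q}(X)$ and replace it by a $q$-independent quantity.} By Corollary~\ref{cor:oscillatory-integral}~(ii) together with the decay of $p_{q,X}$ from Lemma~\ref{lem:delta-exp}, we have $J_{w,q}(X)\ll \cW_{w,h}$ uniformly in $q$, since $X^2\int(1+|zX^2|)^{-2}\,dz\ll1$. Next write
\[
J_{w,q}(X)=J_w^*+E_q, \qquad J_w^*=\int_{\R} I_w(t,\vec 0)\,dt .
\]
Substituting $t=zX^2$ and using the second property of $p_{q,X}$ in Lemma~\ref{lem:delta-exp}, we split the $t$-integral at $|t|=T$. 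For $|t|\le T$ the error is $\ll \cW_{w,h}(1+T)^{2K+2}(q/X)^K$. For $|t|>T$ both integrands are $\ll \cW_{w,h}(1+|t|)^{-2}$, which contributes $\cW_{w,h}T^{-1}$. Optimising with $K$ fixed (say $K=1$, giving $T\asymp (X/q)^{1/5}$) yields $E_q\ll \cW_{w,h}(q/X)^{\eta}$ for some fixed $\eta>0$, and the tail is also absolutely convergent.

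\textbf{Step 2: apply this for a general $\balpha$.} Inserting Step~1 into $\cM_w(X;\balpha)$ gives
\[
\cM_w(X;\balpha)=X^2J_w^*\sum_{\substack{q\le X\\ s\mid q}}\frac{S(q,q\balpha)}{q^4}+O\Bigl(\cW_{w,h}X^2\sum_{q\le X}\frac{\tau(q)}{q^{3/2}}\Bigl(\frac qX\Bigr)^{\eta}\Bigr).
\]
Here Lemma~\ref{lem:complete-exponential-sum} gives $|S(q,q\balpha)|q^{-4}\le\tau(q)q^{-3/2}$. Completing the $q$-sum costs $X^2\cdot X^{-1/2+o(1)}\cW_{w,h}$, and Lemma~\ref{lem:resonant-singular-series} evaluates the completed series as $\lambda(\balpha)\fS(\vec0)$. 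The error sum is $X^{2-\eta}\sum_{q\le X} \tau(q)q^{\eta-3/2}$. If $\eta\ge1/2$ this gives $X^{3/2+o(1)}$ directly; if $\eta<1/2$ it gives only $X^{2-\eta}$, which is too weak.

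\textbf{The main obstacle is sharpening Step~1 to $E_q\ll \cW_{w,h}(q/X)^{1/2}X^{o(1)}$.} The first thing I would try is to avoid bounding $E_q$ term by term altogether. The idea is to carry out the argument at $\balpha=\vec 0$ and subtract, so that the $J$-errors cancel.

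At $\balpha=\vec0$ we have $\lambda=1$, and Lemma~\ref{lem:T approx} together with Lemma~\ref{lem:nonresonant-dual-frequencies} (second case, summed over $q\le X$) gives
\[
N_w(X)=\cM_w(X;\vec0)+O\Bigl(\cW_{w,h}X\sum_{q\le X}\tau(q)q^{-1/2}\Bigr)=\cM_w(X;\vec0)+O\bigl(\cW_{w,h}X^{3/2+o(1)}\bigr).
\]
It therefore suffices to bound $\cM_w(X;\balpha)-\lambda(\balpha)\cM_w(X;\vec0)$. I would write this difference as
\[
X^2\sum_{q\le X}\frac{c(q)}{q^4}J_{w,q}(X), \qquad c(q)=S(q,q\balpha)\one_{s\mid q}-\lambda(\balpha)S(q,\vec 0),
\]
where $\sum_q c(q)q^{-4}=0$ by~\eqref{eq:Sa S0}. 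The factorisation in the proof of Lemma~\ref{lem:resonant-singular-series} shows more: the local factors satisfy $S(p^\nu,\cdot)=0$ for $\nu\ge2$ and for $\nu>a_p$. So $c(q)$ is supported on $q$ of the form (squarefree)$\cdot s'$ with $s'\mid s$, and $c(q)q^{-4}\ll \tau(q)q^{-2}$ off $s$. I would then use partial summation in $q$. This needs the $q$-variation of $J_{w,q}(X)$, namely $\partial_q p_{q,X}$, which I would obtain from the explicit construction in~\cite{StrSoVi26}. With that, the difference is $X^2\cdot O(\cW_{w,h}X^{-1/2+o(1)})$, which proves the lemma.
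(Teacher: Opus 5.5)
\paragraph{Gap in the final step.} Your skeleton matches the paper: replace $J_{w,q}(X)$ by a $q$-independent constant, complete the $q$-sum via Lemma~\ref{lem:complete-exponential-sum} and \eqref{eq:Sa S0}, and identify $\cM_w(X;\vec 0)$ with $N_w(X)$ through Lemma~\ref{lem:T approx} and the second case of Lemma~\ref{lem:nonresonant-dual-frequencies}. The gap is in how you close the argument. The partial-summation step needs control of how $p_{q,X}$ varies in $q$. That is not among the properties in Lemma~\ref{lem:delta-exp}, where $q$ is only an integer index, and you do not supply it. Moreover, the identity $\sum_q c(q)q^{-4}=0$ does not make the errors $E_q$ cancel. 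Partial summation only trades $E_q$ for its increments, and an increment bound of the natural size $E_q/q\ll q^{-1}(q/X)^{\eta}$ gives back $X^{2-\eta}$. So your final bound $X^2\cdot O(\cW_{w,h}X^{-1/2+o(1)})$ is asserted, not proved.

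\paragraph{The missing idea.} The detour is unnecessary; the mistake is your diagnosis in Step~2. The asymptotic formula in Lemma~\ref{lem:delta-exp} holds for every fixed $K$ with the same $p_{q,X}$. Your own balancing gives $E_q\ll\cW_{w,h}(q/X)^{K/(2K+3)}$, and this exponent tends to $1/2$ as $K\to\infty$. Concretely, as in the paper, fix $0<\eta<1/2$, put $T=(X/q)^{1/2-\eta}$ and take $K\ge 3/(4\eta)$. For $|t|\le T$ this gives $(1+|t|)^{2K+2}(q/X)^K\ll(q/X)^{2\eta K+2\eta-1}\le T^{-1}$. Hence both ranges of $t$ contribute $O(\cW_{w,h}T^{-1})$, and $E_q\ll\cW_{w,h}(q/X)^{1/2-\eta}$. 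Your Step~2 error then becomes $\cW_{w,h}X^{3/2+\eta}\sum_{q\le X}\tau(q)q^{-1-\eta}\ll\cW_{w,h}X^{3/2+\eta}$. Since $\eta>0$ is arbitrary, this is exactly the claimed $\cW_{w,h}X^{3/2+o(1)}$. Your remark that $\eta<1/2$ is ``too weak'' overlooks that $X^{2-\eta}=X^{3/2+(1/2-\eta)}$ is admissible once $\eta$ can be taken arbitrarily close to $1/2$. With this fix, Step~2 gives $\cM_w(X;\balpha)=X^2J_w^*\fS(\balpha)+O(\cW_{w,h}X^{3/2+\eta})$ for every rational $\balpha$. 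Subtract $\lambda(\balpha)$ times the case $\balpha=\vec 0$, using $|\lambda(\balpha)|\le1$. Your comparison of $\cM_w(X;\vec0)$ with $N_w(X)$ then completes the proof.
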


\begin{proof}
Put
\[
I_{w,0}(t)
=
\int_{\R^4}w(\vu)\e(tQ_0(\vu))\,d\vu.
\]
By~\eqref{eq:Q1-def} and~\eqref{eq:I-z-v-def}, 
\[
I_w(t,\vec{0})=\e(-t/X^2)I_{w,0}(t). 
\]

By Corollary~\ref{cor:oscillatory-integral}~(ii), we have
\begin{equation}
\label{eq:I0-stationary-phase}
I_{w,0}(t) 
\ll 
\cW_{w,h}  (1+|t|)^{-2}.
\end{equation}
Consequently, the singular integral
\[
\sigma_\infty(w)
=
\int_{\R}I_{w,0}(t)\,dt
\] 
converges absolutely and satisfies
\begin{equation}
\label{eq:sigma-infinity-weight-bound}
\sigma_\infty(w)\ll \cW_{w,h}  .
\end{equation}

The change of variables $t=zX^2$ gives
\[
J_{w,q}(X)
=
\int_{\R}p_{q, X}(t/X^2)\e(-t/X^2)I_{w,0}(t)\,dt.
\]

We now claim that for every fixed $0<\eta<1/2$,
\begin{equation}
\label{eq:Jq-resonant-approx}
J_{w,q}(X)
=
\sigma_\infty(w)
+O\(
\cW_{w,h}  \(\frac qX\)^{1/2-\eta}
\),
\qquad 1\leq q\leq X, 
\end{equation}
where the implied constant may also depend on $\eta$.
Indeed, write
\[
J_{w,q}(X)-\sigma_\infty(w)
=
\int_{\R}
\(p_{q, X}(t/X^2)\e(-t/X^2)-1\)I_{w,0}(t)\,dt,
\]
and split the integral at
\[
T=(X/q)^{1/2-\eta} \ge 1.
\]
For $|t|\leq T$, we use
\begin{align*}
p_{q, X}(t/X^2)&\e(-t/X^2)-1\\
&=
\(p_{q, X}(t/X^2)-1\)\e(-t/X^2)
+\(\e(-t/X^2)-1\).
\end{align*}
Since $T\geq1$, taking $K= 3\eta^{-1}/4 \ge 1$ in the asymptotic formula  for $p_{q, X}(z)$ 
of Lemma~\ref{lem:delta-exp} gives
\[
p_{q, X}(t/X^2)-1
\ll 
\(\frac qX\)^K T^{2K+2}
=
\(\frac qX\)^{1/2+2\eta} \le  \frac{1}{T}.
\]
Moreover,
\[
|\e(-t/X^2)-1|\ll |t|X^{-2}.
\]
Hence, 
\[
p_{q, X}(t/X^2)\e(-t/X^2)-1 \ll T^{-1} + T X^{-2} \ll T^{-1}.
\]

By~\eqref{eq:I0-stationary-phase}, 
we estimate the contribution from $|t|\leq T$ as
\begin{align*}
\int_{|t|\leq T} \(p_{q, X}(t/X^2)\e(-t/X^2)-1\)I_{w,0}(t)\,dt 
 \ll \cW_{w,h}   T^{-1}. 
\end{align*}

Finally, for $|t|>T$, 
 the crude bound $p_{q, X}(z) \ll 1$ from Lemma~\ref{lem:delta-exp} and~\eqref{eq:I0-stationary-phase} give
\begin{align*}
\int_{|t|>T} 
\(|p_{q, X}(t/X^2)|+1\) |I_{w,0}(t)|\,dt &
 \ll \cW_{w,h}   \int_{|t|>T}  (1+|t|)^{-2} \,dt\\
& \ll \cW_{w,h}   T^{-1}, 
\end{align*}
which proves~\eqref{eq:Jq-resonant-approx}.

Substituting~\eqref{eq:Jq-resonant-approx} into the definition of
$\cM_w(X;\balpha)$ and applying
Lemma~\ref{lem:complete-exponential-sum} with $\vv=q\balpha$
(which is valid because $q\balpha\in\Z^4$), together with the divisor
bound~\eqref{eq: tau}, we obtain
\[
\cM_w(X;\balpha)
=
\sigma_\infty(w)X^2
\sum_{\substack{q\leq X\\ q\balpha\in\Z^4}}
\frac{S(q,q\balpha)}{q^4}
+
O\!\left(
\cW_{w,h}X^{3/2+\eta}
\right).
\]
 
Applying Lemma~\ref{lem:complete-exponential-sum} and the divisor
bound~\eqref{eq: tau} once more gives
\[
\sum_{\substack{q\leq X\\ q\balpha\in\Z^4}}
\frac{S(q,q\balpha)}{q^4}
=
\mathfrak S(\balpha)
+
O\!\left(X^{-1/2+o(1)}\right).
\]

Combining this estimate with the preceding formula for
$\cM_w(X;\balpha)$ and using~\eqref{eq:sigma-infinity-weight-bound}, we obtain
\begin{equation}
\label{eq:resonant-main-extraction}
\cM_w(X;\balpha)
=
\sigma_\infty(w)X^2\mathfrak S(\balpha)
+
O\!\left(
\cW_{w,h}X^{3/2+\eta}
\right).
\end{equation}

Applying~\eqref{eq:resonant-main-extraction} at $\balpha$ and $\vec0$,
and using~\eqref{eq:Sa S0} together with $|\lambda(\balpha)|\leq1$,
we obtain, for every fixed $0<\eta<1/2$,
\[
\cM_w(X;\balpha)
=\lambda(\balpha)\cM_w(X;\vec0)
+O_\eta\!\left(\cW_{w,h}X^{3/2+\eta}\right).
\]
It therefore remains to compare $\cM_w(X;\vec0)$ with $N_w(X)$.

For $\balpha=\vec0$, the unique resonant dual frequency
in~\eqref{eq:delta-poisson-formula} is $\vv=\vec0$
for every $q\leq X$.
The second case of Lemma~\ref{lem:nonresonant-dual-frequencies}
therefore gives
\[
\begin{aligned}
\sum_{q\leq X}|\cT_{w,q}^\circ(\vec0)|
&\ll \cW_{w,h}X\sum_{q\leq X}\frac{\tau(q)}{q^{1/2}}\\
&\ll \cW_{w,h}X^{3/2+o(1)}.
\end{aligned}
\]
Since $T_w(X;\vec0)=N_w(X)$, separating the resonant contribution
in~\eqref{eq:delta-poisson-formula} yields
\[
\begin{aligned}
\cM_w(X;\vec0)
&=T_w(X;\vec0)-\sum_{q\leq X}\cT_{w,q}^\circ(\vec0)
  +O(\|w\|_\infty)\\
&=N_w(X)+O\!\left(\cW_{w,h}X^{3/2+o(1)}\right),
\end{aligned}
\]
where we used $\|w\|_\infty\leq\cW_{w,h}$.

Combining these two comparisons, using $|\lambda(\balpha)|\leq1$,
and recalling that $\eta>0$ is arbitrary, we conclude the proof.
\end{proof}

\section{Proof of Theorem~\ref{thm:main-squarefree}}

\subsection{Counting lifts of $\SL_2(\Z/m\Z)$ matrices with smooth cut-offs}

Recall the definitions of $\cW_{w,h}$ in~\eqref{eq:Whw} and $A_{w,m}(X)$ in~\eqref{eq:A_dw}.

\begin{lem}
\label{lem:SmoothApprox} 
Let $B\geq1$ be fixed. Uniformly for real $X\geq1$, integers $m\ge1$, real $h\in(0,1]$ and the test functions $w\in\cD^\infty(\R^4)$
supported in $[-B,B]^4$, we have
\[
A_{w,m}(X)=\beta_mN_w(X)
+O\!\left(\cW_{w,h} mX^{3/2+o(1)}\right).
\]
\end{lem}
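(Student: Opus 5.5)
Detect the divisibility $m\mid\Rform(\va)$ with additive characters and reduce everything to the twisted sums $T_w(X;\balpha)$. Grouping $\va$ by its reduction $g=\va\bmod m$ (which lies in $\cG_m$ whenever $Q(\va)=0$), we have
\[
A_{w,m}(X)=\sum_{\substack{g\in\cG_m\\ \Rform(g)\equiv0 \bmod m}}\ \sum_{\substack{\va\in\Z^4,\ Q(\va)=0\\ \va\equiv g \bmod m}} w(\va/X).
\]
Inserting $\one_{\va\equiv g}=m^{-4}\sum_{\vb\in(\Z/m\Z)^4}\e_m(\vb\cdot(\va-g))$ gives
\[
A_{w,m}(X)=\frac1{m^4}\sum_{\vb\bmod m}\Bigl(\sum_{\substack{g\in\cG_m\\ m\mid\Rform(g)}}\e_m(-\vb\cdot g)\Bigr)T_w(X;\vb/m).
\]
Write $C_m(\vb)$ for the inner coefficient. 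Then $C_m(\vec0)=\beta_m\#\cG_m$ by Corollary~\ref{cor:beta_d2}.

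Next, apply Lemma~\ref{lem:T approx} to each $T_w(X;\vb/m)$ and split it into the resonant part $\cM_w(X;\vb/m)$, the non-resonant parts $\sum_{q\le X}\cT^\circ_{w,q}(\vb/m)$, and the error $O(\|w\|_\infty)$. By Lemma~\ref{lem:rational-twisted-smooth-count}, the resonant part equals $\lambda(\vb/m)N_w(X)+O(\cW_{w,h}X^{3/2+o(1)})$. The resonant main terms then sum to
\[
\frac{N_w(X)}{m^4}\sum_{\vb}C_m(\vb)\lambda(\vb/m).
\]
Here $\lambda(\vb/m)$ computed with the reduced denominator $s=\fs(\vb/m)$ agrees with $\kappa_m(\vb)$, since the reduction $\cG_m\to\cG_s$ has equal fibres. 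Orthogonality over $\vb$ then gives
\[
\frac{1}{\#\cG_m}\#\{g\in\cG_m:\ m\mid\Rform(g)\}=\beta_m,
\]
which is exactly the main term $\beta_m N_w(X)$.

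It remains to control the errors, using the trivial bound $|C_m(\vb)|\le\#\cG_m\le m^3$. The resonant errors and the $O(\|w\|_\infty)$ terms contribute at most $m^{-4}\cdot m^4\cdot m^3\cdot\cW_{w,h}X^{3/2+o(1)}$. This is too big by $m^2$, so the trivial bound must be refined, and that is the main obstacle. Use Cauchy--Schwarz together with Parseval: since $C_m$ is the Fourier transform of an indicator of a set of size at most $m^3$, we get $\sum_{\vb}|C_m(\vb)|^2\le m^4\cdot m^3$. Hence $m^{-4}\sum_{\vb}|C_m(\vb)|\le m^{-4}(m^4)^{1/2}(m^7)^{1/2}=m^{3/2}$. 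This gives a resonant error of $\cW_{w,h}m^{3/2}X^{3/2+o(1)}$, still short of $m$. To reach $m$, I would instead exploit the factorisation of $\lambda$ and $C_m$ via CRT together with Kloosterman-type square-root cancellation, aiming for $|C_m(\vb)|\ll m^{2+o(1)}$ when $\vb\ne\vec0$. Then $m^{-4}\sum_{\vb}|C_m(\vb)|\le m^{2+o(1)}$ for the crude errors. For the resonant errors this is still problematic, so it may be better not to expand the resonant error per $\vb$. Instead, apply Lemma~\ref{lem:rational-twisted-smooth-count} only in the combined form $\sum_\vb C_m(\vb)\cM_w(X;\vb/m)$, i.e.\ redo its proof with $q$ running over multiples of $s$, where the factor $\sum_{s\mid q}\tau(q)q^{-3/2}\ll s^{-3/2+o(1)}$ supplies the missing saving.

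For the non-resonant part, Lemma~\ref{lem:nonresonant-dual-frequencies} bounds $\cT^\circ_{w,q}(\vb/m)$ by $\cW_{w,h}X^2\tau(q)q^{-3/2}K_{q,m}(\vb)$. Cauchy--Schwarz with the Parseval bound $\sum_\vb|C_m|^2\le m^7$ and Lemma~\ref{lem:frequency-grid-second-moment} gives
\[
m^{-4}\sum_{\vb}|C_m(\vb)|K_{q,m}(\vb)\le m^{-4}\,m^{7/2}\,m^2\,\frac qX=m^{3/2}\frac qX.
\]
Summing $X^2\tau(q)q^{-1/2}X^{-1}$ over $q\le X$ yields $\cW_{w,h}m^{3/2}X^{3/2+o(1)}$. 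To improve $m^{3/2}$ to $m$, I expect one needs the sharper pointwise character-sum bound on $C_m(\vb)$ above; verifying that bound, prime by prime through the $(X,Y,U,V)$ change of variables from Lemma~\ref{lem:local-square-density}, is where I expect the real work.
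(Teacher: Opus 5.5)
Your architecture is the paper's. $C_m(\vb)$ is exactly $\widehat\Psi_m(\vb)$ for the indicator of $\cS_m=\{g\in\cG_m:\ m\mid\Rform(g)\}$. The main term comes out by the same orthogonality, and the errors are meant to be handled by the same Cauchy--Schwarz/Parseval step combined with Lemma~\ref{lem:frequency-grid-second-moment}. However, as written you only prove the lemma with $m^{3/2}$ in place of $m$. The repair you suggest is unnecessary, and its pointwise-bound part cannot work.

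The gap is one wrong input to Parseval. $C_m$ is the Fourier transform of the indicator of $\cS_m$, not of a set of size up to $\#\cG_m$. You already noted $C_m(\vec0)=\beta_m\#\cG_m$, and by Lemma~\ref{lem:beta-d-estimates} and \eqref{eq:order_Gm} this gives $\#\cS_m=\beta_m\#\cG_m\le m^{2+o(1)}$, not $m^3$. Hence $\sum_{\vb}|C_m(\vb)|^2=m^4\#\cS_m\le m^{6+o(1)}$, so $m^{-4}\sum_{\vb}|C_m(\vb)|\le(\#\cS_m)^{1/2}\le m^{1+o(1)}$. This handles the resonant errors from Lemma~\ref{lem:rational-twisted-smooth-count}, which are uniform in $\vb$, and the $O(\|w\|_\infty)$ terms. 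The same Cauchy--Schwarz with Lemma~\ref{lem:frequency-grid-second-moment} gives $m^{-4}\sum_{\vb}|C_m(\vb)|K_{q,m}(\vb)\le(\#\cS_m)^{1/2}q/X\le m^{1+o(1)}q/X$. Summing Lemma~\ref{lem:nonresonant-dual-frequencies} over $q\le X$ then gives $\cW_{w,h}m^{1+o(1)}X^{3/2+o(1)}$, which is the claimed bound.

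By contrast, the pointwise bound $|C_m(\vb)|\ll m^{2+o(1)}$ you aim for is trivial, since $|C_m(\vb)|\le C_m(\vec0)=\#\cS_m$. Used in $\ell^1$ it gives only $m^{2+o(1)}$, worse than the $m^{3/2}$ you already have. No Kloosterman-type input and no reworking of the resonant analysis over multiples of $s$ is needed.

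Finally, to absorb the factor $m^{o(1)}$ into $X^{o(1)}$ you should first dispose of $m>X$. There the trivial bounds $0\le\beta_m\le1$ and $|A_{w,m}(X)|+|N_w(X)|\ll\|w\|_\infty X^2\le\cW_{w,h}X^2$ already suffice.
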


\begin{proof}
For $m>X$, the bounds $0\leq\beta_m\leq1$ and
$|A_{w,m}(X)|+|N_w(X)|\ll\|w\|_\infty X^2$ prove the assertion.
We may therefore assume that $m\leq X$, so that factors $m^{o(1)}$
can be absorbed into $X^{o(1)}$.  Define
\[
\cS_m=\{\vx\bmod m:~Q(\vx)\equiv0\bmod m,
                       ~\Rform(\vx)\equiv0\bmod m\},
\]
and let $\Psi_m=\one_{\cS_m}$ on $(\Z/m\Z)^4$, with Fourier transform
\[
\widehat\Psi_m(\vb)
=\sum_{\vx\bmod m}\Psi_m(\vx)\e_m(-\vb\cdot\vx).
\]
For every integral solution of $Q(\va)=0$, Fourier inversion gives
\[
\one_{m\Z}\(\Rform(\va)\)
=
m^{-4}\sum_{\vb\in(\Z/m\Z)^4}
\widehat{\Psi}_m(\vb)\e_m(\vb\cdot\va).
\]
Consequently, recalling the definition~\eqref{eq:TwX}, we obtain
\begin{equation}
\begin{split}
\label{eq:Adw-fourier}
A_{w,m}(X)& =
\sum_{\substack{\va\in\Z^4\\Q(\va)=0}}
 w(\va/X)\one_{m\Z}(\Rform(\va))\\
& =m^{-4}\sum_{\vb\in(\Z/m\Z)^4}
\widehat\Psi_m(\vb)T_w(X;\vb/m).
\end{split} 
\end{equation}

For each $\vb\in(\Z/m\Z)^4$, put $s=\fs(\vb/m)$ and
$\vb^*=s\vb/m\in\Z^4$.  Since $s\mid m$ and reduction
$\cG_m\to\cG_s$ is surjective with equal-sized fibres, we have
\begin{equation}
\label{eq:kappa-lambda-fourier-mode}
\kappa_m(\vb)=\kappa_s(\vb^*)=\lambda(\vb/m).
\end{equation}

Using Fourier inversion, together with~\eqref{eq: kappa tb}, we obtain
\begin{align*}
m^{-4}\sum_{\vb\in(\Z/m\Z)^4}
\widehat{\Psi}_m(\vb)\kappa_m(\vb)
=
\frac{1}{\#\cG_m}
\sum_{g\in\cG_m}\Psi_m(g)
=
\frac{\#\cS_m}{\#\cG_m},
\end{align*}
where, as before, we identify $g\in\cG_m$ with a vector $\vx \in \(\Z/m\Z\)^4$ of 
its coefficients. 
Hence, using  Corollary~\ref{cor:beta_d2}, we obtain 
\begin{equation}
    \label{eq:sim_coe_mian term}
    m^{-4}\sum_{\vb\in(\Z/m\Z)^4}
\widehat\Psi_m(\vb) \kappa_m(\vb)
=
\beta_{m}.
\end{equation}
Since by  the Parseval identity
\[
\sum_{\vb\in(\Z/m\Z)^4}|\widehat\Psi_m(\vb)|^2=m^4\#\cS_m,
\]
the  Cauchy--Schwarz inequality yields
\begin{equation}
\label{eq:fourier-parseval-average}
m^{-4}\sum_{\vb\in(\Z/m\Z)^4}|\widehat\Psi_m(\vb)|
\leq(\#\cS_m)^{1/2}\leq m^{1+o(1)},
\end{equation}
where the last inequality follows from
$\#\cS_m=\beta_m\#\cG_m\leq m^{2+o(1)}$, by
Lemma~\ref{lem:beta-d-estimates} and~\eqref{eq:order_Gm}.

We average over $\vb$ before summing over $q$, retaining
the frequency dependence in Lemma~\ref{lem:nonresonant-dual-frequencies}.
Recall the definition of $K_{q,m}(\vb)$ in~\eqref{eq:K_def}.
Combining Lemma~\ref{lem:frequency-grid-second-moment} with~\eqref{eq:fourier-parseval-average}, we obtain
\begin{equation}
\label{eq:weighted-frequency-average}
\begin{split}
m^{-4}\sum_{\vb\in(\Z/m\Z)^4}
|\widehat\Psi_m(\vb)|K_{q,m}(\vb)
&\ll(\#\cS_m)^{1/2}\frac qX\\
&\leq m^{1+o(1)}\frac qX.
\end{split}
\end{equation}
Lemma~\ref{lem:nonresonant-dual-frequencies}, with 
$
\boldsymbol\alpha=\mathbf b/m,
$ 
gives
\[
\left| \mathcal{T}^\circ_{w,q}(\mathbf{b}/m) \right|
\ll
\mathcal{W}_{w,h} X^2 \frac{\tau(q)}{q^{3/2}} K_{q,m}(\mathbf{b}).
\]
Hence by~\eqref{eq:weighted-frequency-average},
 we obtain
\begin{equation}
\label{eq:averaged-nonresonant-error}
\begin{split}
m^{-4}\sum_{\vb\in(\Z/m\Z)^4}|& \widehat\Psi_m(\vb)|
\sum_{q\leq X}|\cT_{w,q}^\circ(\vb/m)|\\
& \ll\cW_{w,h}m^{1+o(1)}X
\sum_{q\leq X}\frac{\tau(q)}{q^{1/2}}\\
& \leq\cW_{w,h}mX^{3/2+o(1)}.
\end{split}
\end{equation}

Finally, Lemmas~\ref{lem:T approx} and
\ref{lem:rational-twisted-smooth-count}, together
with~\eqref{eq:kappa-lambda-fourier-mode}, give
\[
\begin{split}
T_w(X;\vb/m)-\kappa_m(\vb)N_w(X)
&=\sum_{q\leq X}\cT_{w,q}^\circ(\vb/m) +O\!\left(\cW_{w,h}X^{3/2+o(1)}\right).
\end{split}
\]

Since $\|w\|_\infty\leq\cW_{w,h}$, the error in
Lemma~\ref{lem:T approx} is absorbed into the desired error term for $A_{w,m}(X)$. The resonant error is uniform in $\vb$, and its Fourier average is
$O(\cW_{w,h}mX^{3/2+o(1)})$ by~\eqref{eq:fourier-parseval-average}.
Substituting into~\eqref{eq:Adw-fourier} and
using~\eqref{eq:sim_coe_mian term} and~\eqref{eq:averaged-nonresonant-error}
complete the proof.
\end{proof}

\subsection{Counting lifts of $\SL_2(\Z/m\Z)$ matrices with sharp cut-offs}

We require a simple estimate for the number of matrices $\gamma \in \Gamma$ whose norm $\|\gamma\|_\infty$ lies within a short interval. Specifically, let
\[
  N(X,H) = N(X+H) - N(X-H).
\]

\begin{lemma} 
\label{lem:sharp-box-boundary-shell}
Uniformly for $X\geq2$ and $1\leq H\leq X/2$,
\[
N(X,H)  \ll XH.
\]
\end{lemma}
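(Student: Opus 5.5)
The plan is to count directly. $N(X,H)$ counts $\gamma\in\Gamma$ with $X-H<\|\gamma\|_\infty\le X+H$. For such $\gamma$, some entry $a_i$ satisfies $X-H<|a_i|\le X+H$, and all entries are at most $X+H\le 3X/2$ in absolute value. By symmetry it suffices to bound the number of $\gamma$ with $a_1$ in this range, and similarly for each of the other three positions. The maps $\gamma\mapsto\gamma^{T}$ and left/right multiplication by $\begin{pmatrix}0&1\\-1&0\end{pmatrix}$ permute the entries up to sign and preserve $\|\cdot\|_\infty$, so the four cases are equivalent.

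Fix $a_1$ with $|a_1|\in(X-H,X+H]$; there are $O(H)$ such values. The equation $a_1a_4-a_2a_3=1$ forces $\gcd(a_1,a_3)=1$. For each $a_3$ with $|a_3|\le 3X/2$ and $\gcd(a_1,a_3)=1$, the pairs $(a_2,a_4)$ solving $a_1a_4-a_2a_3=1$ form a single coset $(a_2^{(0)},a_4^{(0)})+t(a_1,a_3)$ with $t\in\Z$. Since $|a_1|>X-H\ge X/2$, the condition $|a_2|\le 3X/2$ leaves $O(1)$ admissible values of $t$. So each pair $(a_1,a_3)$ contributes $O(1)$ matrices, and there are $O(X)$ choices of $a_3$.

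This gives $O(H)\cdot O(X)\cdot O(1)=O(XH)$ matrices with $a_1$ in the range, and summing over the four positions proves $N(X,H)\ll XH$.

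There is no real obstacle. The only points to check are that the lattice of solutions has step vector $(a_1,a_3)$ with $|a_1|\gg X$, so the number of lattice points in an interval of length $O(X)$ is bounded, and that $H\le X/2$ ensures both $|a_1|\ge X/2$ and $X+H\ll X$. I would not try to deduce the bound from Krieg's asymptotic~\eqref{eq:Krieg}, since its error term $O(X\log X)$ is too weak for small $H$.
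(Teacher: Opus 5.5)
Your proposal is correct and is essentially the paper's argument. You fix $a_1$ in the shell ($O(H)$ choices) and one off-diagonal entry ($O(X)$ choices), and the other off-diagonal entry is then confined to a single residue class modulo $|a_1|\gg X$, leaving $O(1)$ possibilities. The differences are cosmetic: you fix $a_3$ and describe the solutions as a coset with step $(a_1,a_3)$, where the paper fixes $a_2$ and solves $a_2a_3\equiv-1 \bmod |a_1|$; and you justify the reduction to the $a_1$ case via transposition and multiplication by $\begin{pmatrix}0&1\\-1&0\end{pmatrix}$, where the paper just says the other cases are analogous.
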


\begin{proof}
It is enough to count matrices for which the absolute value of one prescribed
entry lies in $[X-H,X+H]$. Without loss of generality,  we may therefore suppose that
$X-H\leq|a_1|\leq X+H$ (since the other cases are fully analogous).

Fix $a_1$ and $a_2$.  The congruence
\[
a_2a_3\equiv-1\bmod{|a_1|}
\]
has no solution unless $\gcd(a_1,a_2)=1$, and otherwise it determines one residue
class for $a_3$ modulo $|a_1|$.  Since $|a_3|\leq X+H$ and
$|a_1|\asymp X$, there are $O(1)$ possible values of $a_3$; the entry
$a_4=(1+a_2a_3)/a_1$ is then determined.  There are $O(X)$ choices for
$a_2$ and $O(H)$ choices for $a_1$, proving the desired result. 
\end{proof}

We note  that the asymptotic formula~\eqref{eq:Krieg} instantly gives a slightly 
weaker bound \[
N(X,H)\ll XH+X\log X,
\] instead of that of Lemma~\ref{lem:sharp-box-boundary-shell}, 
which is also sufficient for our purpose.

\begin{lem}
\label{lem:circle-square-divisor-count}
Uniformly for integers $m$ satisfying $1\leq m\leq X^{1/4}$, we have
\[
    A_m(X) = \beta_mN(X) + O\!\left(m^{-3/5}X^{19/10+o(1)}\right)
\] 
as $X\to\infty$.
\end{lem}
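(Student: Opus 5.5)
The plan is to sandwich the sharp count $A_m(X)$ between smoothed counts and apply Lemma~\ref{lem:SmoothApprox}, with $h$ optimised in terms of $m$ and $X$. Take the weights $w_{h,\pm}$ from Lemma~\ref{lem:adjustable-edge-cutoffs}, which satisfy $w_{h,-}\le \one_{[-1,1]^4}\le w_{h,+}$ and are supported in $[-2,2]^4$ (so $B=2$). Since $m\mid\Rform(\va)$ is a non-negative indicator, we get
\[
A_{w_{h,-},m}(X)\le A_m(X)\le A_{w_{h,+},m}(X).
\]
Part~(ii) of Lemma~\ref{lem:adjustable-edge-cutoffs} gives $\|w_{h,\pm}\|_{\cE_h^5}\ll1$, hence $\cW_{w_{h,\pm},h}\ll h^{-4}$. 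Lemma~\ref{lem:SmoothApprox} then yields
\[
A_{w_{h,\pm},m}(X)=\beta_mN_{w_{h,\pm}}(X)+O\bigl(h^{-4}mX^{3/2+o(1)}\bigr).
\]

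Next, compare $N_{w_{h,\pm}}(X)$ with $N(X)$. Since $\one_{[-1+h,1-h]^4}\le w_{h,-}$ and $w_{h,+}\le\one_{[-1-h,1+h]^4}$, we have
\[
N((1-h)X)\le N_{w_{h,-}}(X)\le N(X)\le N_{w_{h,+}}(X)\le N((1+h)X).
\]
Apply Lemma~\ref{lem:sharp-box-boundary-shell} with $H=hX$; this requires $1\le hX\le X/2$, which is fine for the $h$ chosen below. It gives $N_{w_{h,\pm}}(X)=N(X)+O(hX^2)$. Multiplying by $\beta_m\le m^{-1+o(1)}$ (Lemma~\ref{lem:beta-d-estimates}), the main-term error is $O(m^{-1+o(1)}hX^2)$. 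Combining this with the sandwich gives
\[
A_m(X)=\beta_mN(X)+O\bigl(m^{-1+o(1)}hX^2+h^{-4}mX^{3/2+o(1)}\bigr).
\]

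Finally, balance the two error terms: $m^{-1}hX^2=h^{-4}mX^{3/2}$ gives $h^5=m^2X^{-1/2}$, that is, $h=m^{2/5}X^{-1/10}$. The constraint $m\le X^{1/4}$ ensures $h\le1$, and indeed $h\le X^{0}$ exactly at the endpoint. To keep $h\le1/2$ as required by Lemma~\ref{lem:adjustable-edge-cutoffs}, replace $h$ by $\min\{h,1/2\}$; at the endpoint the bound is trivial anyway, since the error term then exceeds $X^2$ up to constants. Clearly $hX\ge1$. With this choice both error terms equal $m^{-1}\cdot m^{2/5}X^{19/10}=m^{-3/5}X^{19/10+o(1)}$, which is the claimed bound.

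The only delicate point is keeping the $m^{o(1)}$ factors and the range constraints on $h$ uniform. These are routine, since $m\le X$ lets all $m^{o(1)}$ factors be absorbed into $X^{o(1)}$.
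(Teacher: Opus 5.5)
Your proposal is correct and is essentially the paper's proof: the same sandwich by $w_{h,\pm}$, then Lemma~\ref{lem:SmoothApprox} with $\cW_{w_{h,\pm},h}\ll h^{-4}$, then Lemma~\ref{lem:sharp-box-boundary-shell} for $N_{w_{h,\pm}}(X)-N(X)\ll hX^2$, and the balancing choice $h\asymp m^{2/5}X^{-1/10}$ (the paper takes $h=\tfrac12 m^{2/5}X^{-1/10}$, which gives $h\le 1/2$ automatically). Your endpoint remark is inaccurate, since at $m\asymp X^{1/4}$ the target error is $X^{7/4}$ rather than $\gg X^2$; the cap $h=1/2$ is nonetheless harmless, because it is only active when $m\asymp X^{1/4}$, and there both $m^{-1}X^2$ and $mX^{3/2}$ are $\ll m^{-3/5}X^{19/10}$.
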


\begin{proof}
Let $1\leq H\leq X/2$ and put
\[
    h=\frac{H}{X}.
\]
Let $w_{h,-}$ and $w_{h,+}$ be the product weights provided by Lemma~\ref{lem:adjustable-edge-cutoffs}~(ii). Their defining properties give
\[
    A_{w_{h,-},m}(X) \leq A_m(X) \leq A_{w_{h,+},m}(X)
\]
and
\[
    N_{w_{h,-}}(X) \leq N(X) \leq N_{w_{h,+}}(X).
\]
Moreover,
\[
    \supp w_{h,\pm}\subseteq[-3/2,3/2]^4, \qquad \|w_{h,\pm}\|_{\cE_h^5}\ll1.
\]

By Lemma~\ref{lem:sharp-box-boundary-shell},
\begin{equation}
\label{eq:smoothing-boundary-count}
\begin{split}
N_{w_{h,+}}(X)&-N_{w_{h,-}}(X)\\
& \ll   \#\{\gamma\in\Gamma:~X-H<\|\gamma\|_\infty\leq X+H\} \ll XH.
\end{split}
\end{equation}
Since
\[
    \cW_{w_{h,\pm},h} = h^{-4}\|w_{h,\pm}\|_{\cE_h^5} \ll \left(\frac{X}{H}\right)^4,
\]

Lemma~\ref{lem:SmoothApprox}, applied to the two weights $w_{h,-}$ and $w_{h,+}$, 
and~\eqref{eq:smoothing-boundary-count} yield
\[
 A_m(X) = \beta_mN(X) + O\!\left( mX^{3/2+o(1)} \left(\frac{X}{H}\right)^4 + \beta_mXH \right).
\]

By Lemma~\ref{lem:beta-d-estimates},
\[
    \beta_m\leq m^{-1+o(1)}.
\]
We now choose
\[
H=\frac12m^{2/5}X^{9/10}.
\]
The assumption $m\leq X^{1/4}$ ensures that $H\leq X/2$, while $H\geq1$ for all sufficiently large $X$.
With this choice, the error terms are bounded by $m^{-3/5}X^{19/10+o(1)}$.
This completes the proof.
\end{proof}

In particular, we see that Lemma~\ref{lem:circle-square-divisor-count} implies:

\begin{cor}
 \label{cor:small-divisor-total-error}
Let $1\leq D\leq X^{1/8}$.   As $X\to \infty$, 
\[
\sum_{d\leq D}
|\mu(d)|\,|A_{d^2}(X)-\beta_{d^2}N(X)|
\le  X^{19/10+o(1)}.
\]
\end{cor}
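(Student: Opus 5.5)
We apply Lemma~\ref{lem:circle-square-divisor-count} to each term with $m=d^2$ and then sum the resulting errors over $d\le D$.

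First, we check that the lemma applies. Since $D\le X^{1/8}$, every $d\le D$ satisfies $m=d^2\le D^2\le X^{1/4}$. This is exactly the uniform range in Lemma~\ref{lem:circle-square-divisor-count}. The uniformity there means the implied constant, and the meaning of the $X^{o(1)}$ factor, do not depend on $d$. Hence for every squarefree $d\le D$ we get
\[
|A_{d^2}(X)-\beta_{d^2}N(X)|\le d^{-6/5}X^{19/10+o(1)}.
\]

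Next, we sum this bound over $d\le D$. We use $|\mu(d)|\le 1$ and note that $\sum_{d\ge1}d^{-6/5}$ converges. This gives
\[
\sum_{d\le D}|\mu(d)|\,|A_{d^2}(X)-\beta_{d^2}N(X)|\le X^{19/10+o(1)}\sum_{d\ge1}d^{-6/5}\ll X^{19/10+o(1)},
\]
and the absolute constant is absorbed into $X^{o(1)}$.

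The only point needing care is that the $o(1)$ in the lemma is uniform in $m$, so it can be taken outside the sum. This holds because the lemma is stated uniformly for $m\le X^{1/4}$. Even without convergence of the series, $D\le X^{1/8}$ terms would only cost a factor $X^{1/8}$; the decay $d^{-6/5}$ is what removes it. So there is no real obstacle.
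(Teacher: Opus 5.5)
Your proposal is correct and is the same argument the paper uses. The paper states that the corollary follows directly from Lemma~\ref{lem:circle-square-divisor-count} applied with $m=d^2\le D^2\le X^{1/4}$. Each term then contributes $O(d^{-6/5}X^{19/10+o(1)})$ uniformly in $d$, and since $\sum_{d}d^{-6/5}$ converges, the total is $X^{19/10+o(1)}$.
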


\subsection{Large square divisors}

We now estimate the contribution to $S_{\mathrm{sq}}(X)$ from $d>D$. 
For this, we use the following simple bound.

\begin{lemma} 
\label{lem:fibre-bound}
We have 
\[
\#\{\gamma\in\Gamma:~\Rform(\gamma)=n\} \le  n^{o(1)}, \qquad \text{as}\ n \to \infty.
\]
\end{lemma}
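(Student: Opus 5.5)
We use the same change of variables as in the proof of Lemma~\ref{lem:local-square-density}, now over $\Z$ rather than modulo $p^\nu$. For $\gamma\in\Gamma$ with entries $a_1,\dots,a_4$, put
\[
X=a_1+a_4,\qquad Y=a_2-a_3,\qquad U=a_1-a_4,\qquad V=a_2+a_3.
\]
Then
\[
X^2+Y^2=\Rform(\gamma)+2,\qquad U^2+V^2=\Rform(\gamma)-2.
\]
This is essentially the Friedlander--Iwaniec parametrisation used in~\cite{FrIw09}.

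The map $(a_1,a_2,a_3,a_4)\mapsto(X,Y,U,V)$ is injective on $\Z^4$, since $a_1=(X+U)/2$, $a_4=(X-U)/2$, and similarly for $a_2,a_3$. So if $\Rform(\gamma)=n$, then $\gamma$ is determined by a pair consisting of a representation of $n+2$ as a sum of two squares and a representation of $n-2$ as a sum of two squares. Hence
\[
\#\{\gamma\in\Gamma:~\Rform(\gamma)=n\}\le r_2(n+2)\,r_2(n-2),
\]
where $r_2(k)=\#\{(x,y)\in\Z^2:~x^2+y^2=k\}$. Matrices in $\Gamma$ satisfy $\Rform(\gamma)\ge 2$. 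For $n=2$ the second factor is $r_2(0)=1$, and in any case only large $n$ matters.

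Next we apply the classical bound $r_2(k)\le 4\tau(k)$ for $k\ge1$, which follows from $r_2(k)=4\sum_{d\mid k}\chi_{-4}(d)$. Combined with~\eqref{eq: tau}, this gives
\[
r_2(n\pm2)\le (n+2)^{o(1)}=n^{o(1)}.
\]
Multiplying the two factors proves the lemma.

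There is no real obstacle. The only points to check are that the linear change of variables is injective over $\Z$ (it has determinant $\pm4$, so it is injective though not surjective), and that the degenerate case $n-2=0$ contributes boundedly.
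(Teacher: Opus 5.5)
Your proof is correct and takes the same approach as the paper, which bounds the fibre by $r_2(n-2)\,r_2(n+2)$ using the Friedlander--Iwaniec parametrisation and then applies $r_2(m)\ll\tau(m)$ together with the divisor bound~\eqref{eq: tau}. You also write out the change of variables and check its injectivity over $\Z$, a step the paper leaves to~\cite{FrIw09}.
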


\begin{proof}
For $n \in \Z$, let 
\[
r_2(n)=\#\{(x,y)\in\Z^2:~x^2+y^2=n\}.
\]
Then,  arguing as in~\cite[Equations~(1.12)--(1.14)]{FrIw09}, we see that 
\[
\#\{\gamma\in\Gamma:~\Rform(\gamma)=n\} \ll r_2(n-2)r_2(n+2).
\]

Since $r_2(m) \ll \tau(m)$ (see, for example,~\cite[Equation~(3.1)]{FrIw09}), 
the result now follows from~\eqref{eq: tau}. 
\end{proof}

Recall the definition of $\Gamma_X$ in~\eqref{eq:Gamma X}.

\begin{lemma} 
\label{lem:large-square-divisor-tail}
Let $D\ge1$. We have
\[
\sum_{d>D} 
\#\{\gamma\in\Gamma_X:~d^2\mid\Rform(\gamma)\}
\le  D^{-1} X^{2+o(1)}.
\]
\end{lemma}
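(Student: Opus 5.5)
We swap the order of summation and count by values of $\Rform$. For $\gamma\in\Gamma_X$ we have $1\le \Rform(\gamma)\le 4X^2$; in fact $\Rform(\gamma)\ge 2$, since $\det\gamma=1$ forces at least two nonzero entries. Hence only $d\le 2X$ contributes. Each pair $(d,\gamma)$ with $d^2\mid\Rform(\gamma)$ corresponds to an integer $n=\Rform(\gamma)\le 4X^2$ with $d^2\mid n$. By Lemma~\ref{lem:fibre-bound}, each such $n$ has at most $n^{o(1)}=X^{o(1)}$ preimages $\gamma\in\Gamma$. Therefore
\[
\sum_{d>D}\#\{\gamma\in\Gamma_X:~d^2\mid\Rform(\gamma)\}
\le X^{o(1)}\sum_{D<d\le 2X}\#\{n\le 4X^2:~d^2\mid n\}.
\]

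Next we count the multiples of $d^2$ up to $4X^2$. There are at most $4X^2/d^2$ of them, and for $d\le 2X$ this quantity is at least $1$, so no separate "$+1$" term is needed. Summing over $d>D$ gives
\[
\sum_{d>D}\frac{4X^2}{d^2}\ll \frac{X^2}{D},
\]
and hence the stated bound $D^{-1}X^{2+o(1)}$. When $D\ge 2X$ the sum is empty and the bound holds trivially.

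The only delicate point is that the $o(1)$ in Lemma~\ref{lem:fibre-bound} is asymptotic in $n$, while small $n$ also occur. This is harmless: for every $n\le 4X^2$ the fibre size is bounded by $\max_{n\le 4X^2}n^{o(1)}\le X^{o(1)}$. Alternatively, one can use the explicit bound $r_2(n-2)r_2(n+2)\ll\tau(n-2)\tau(n+2)$ coming from the proof of that lemma. No further obstacle arises.
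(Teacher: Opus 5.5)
Your proposal is correct and follows essentially the same route as the paper: bound the sum by $\sum_{d>D}\sum_{k\le 4X^2/d^2}\#\{\gamma\in\Gamma:~\Rform(\gamma)=kd^2\}$, apply Lemma~\ref{lem:fibre-bound} to each fibre, and sum $X^2/d^2$ over $d>D$. Your extra remarks, restricting to $d\le 2X$ and making the $o(1)$ uniform for small $n$, are harmless refinements that the paper leaves implicit.
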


\begin{proof}
If $\gamma\in\Gamma_X$, then $\Rform(\gamma)\le4X^2$.  Therefore
\begin{align*}
\sum_{d>D} 
\#\{\gamma&\in\Gamma_X:~d^2\mid\Rform(\gamma)\}\\
&  \le 
\sum_{d>D}\ \sum_{1\le k\le 4X^2/d^2}
\#\{\gamma\in\Gamma:~\Rform(\gamma)=kd^2\}.
\end{align*}
By Lemma~\ref{lem:fibre-bound}, we now have 
\begin{align*}
\sum_{d>D}\ \sum_{1\le k\le 4X^2/d^2}&
\#\{\gamma\in\Gamma:~\Rform(\gamma)=kd^2\}\\
&\le X^{o(1)} \sum_{d>D}\frac{X^2}{d^2} 
\le  
 D^{-1} X^{2+o(1)}, 
\end{align*}
and the result follows. 
\end{proof}

\subsection{Completion of the proof}
Let $D=\lfloor X^{1/10}\rfloor$.
Splitting the sum over $d$ in~\eqref{eq:incl/excl} 
at $D$, using 
Corollary~\ref{cor:small-divisor-total-error} for
$d\le D$  and  Lemma~\ref{lem:large-square-divisor-tail} to estimate the contribution to 
$S_{\mathrm{sq}}(X)$ from $d > D$, we derive 
\begin{equation}
\label{eq:mobius-truncation}
\begin{split}
S_{\mathrm{sq}}(X)
&=
\sum_{\substack{d\leq D\\d\ \mathrm{squarefree}}}
\mu(d)A_{d^2}(X)\\
&\qquad\qquad +
O\(
\sum_{d>D}|\mu(d)|
\#\{\gamma\in\Gamma_X:~d^2\mid\Rform(\gamma)\}
\)\\
& =
N(X)\sum_{\substack{d\leq D\\d\ \mathrm{squarefree}}}
\mu(d)\beta_{d^2}
+O\(X^{19/10+o(1)}+ D^{-1} X^{2+o(1)}\).
\end{split}
\end{equation}

Lemma~\ref{lem:beta-d-estimates} gives
\begin{equation}
\label{eq:Trunc Sum}
\sum_{\substack{d\leq D\\d\ \mathrm{squarefree}}}\mu(d)\beta_{d^2}
= \sum_{\substack{d\geq1\\d\ \mathrm{squarefree}}}\mu(d)\beta_{d^2} + O\(D^{-1+o(1)}\). 
\end{equation}
Since $\mu(d)$ and the function $\beta_{d^2}$ are
multiplicative,  we expand the last sum as the 
absolutely convergent Euler product
\begin{equation}
\label{eq:Euler Prod}
\sum_{\substack{d\geq1\\d\ \mathrm{squarefree}}}\mu(d)\beta_{d^2}
=\prod_p(1-\beta_{p^2})
=\mathfrak S_{\Rform}^{\mathrm{sq}}.
\end{equation}

Finally, $N(X)\ll X^2$ by~\eqref{eq:Krieg}.  Therefore, our choice of $D$,  
together with~\eqref{eq:mobius-truncation}, \eqref{eq:Trunc Sum} and~\eqref{eq:Euler Prod},
yields the desired result.

\section{Proof of Theorem~\ref{thm:almost-prime-lower-bound}} 

Put $Y=4X^2$ and define the nonnegative sequence
\[
a_X(n)=\#\{\gamma\in\Gamma_X:~\Rform(\gamma)=n\}.
\]
Thus $a_X(n)=0$ unless $2\leq n\leq Y$ and
\begin{align*}
\sum_{n\leq Y}a_X(n)&=N(X),\\
\sum_{\substack{n\leq Y\\d\mid n}}a_X(n)
&=A_d(X)=\beta_dN(X)+r_d(X),
\end{align*}
where $r_d(X)=A_d(X)-\beta_dN(X)$.

To establish the required level of distribution, fix $0<\vartheta<1/8$
and set $D=Y^\vartheta$.  For sufficiently large $X$
we have $D\leq X^{1/4}$.  Lemma~\ref{lem:circle-square-divisor-count}
therefore gives,
\begin{equation}
\label{eq:almost-prime-level}
\begin{split}
\sum_{d\leq D} |r_d(X)|
&\ll X^{19/10+o(1)}
       \sum_{d\leq D}\frac{1}{d^{3/5}}\\
&\leq X^{19/10+o(1)}D^{2/5}\\
&=X^{19/10+4\vartheta/5+o(1)}.
\end{split}
\end{equation}

We next verify the local conditions for the sieve.  The function
$d\mapsto\beta_d$ is multiplicative by its definition
in~\eqref{eq:beta-m-def}; in particular,
$\beta_d=\prod_{p\mid d}\beta_p$ for every squarefree $d\ge 1$.
Lemma~\ref{lem:local-square-density} gives
\[
\beta_2=\frac13 
\mand 
\beta_p=\frac{(p-\jac{-1}{p})^2}{p(p^2-1)}
=\frac1p+O\!\left(\frac1{p^2}\right), \quad p\ge 3.
\]
In particular, $0\leq\beta_p<1$ for every prime $p$.

Define
\[
V(z)=\prod_{p<z}(1-\beta_p).
\]
Mertens' theorem (see~\cite[Equation~(2.16)]{IwKow04}) and
the estimate $\beta_p=1/p+O(p^{-2})$ give
$V(z)\asymp1/\log z$ and
\[
\prod_{w\leq p<z}(1-\beta_p)^{-1}
=\frac{V(w)}{V(z)}
\leq\frac{\log z}{\log w}
\left(1+\frac{K}{\log w}\right)
\qquad (2\leq w<z),
\]
where $K$ is an absolute positive constant.

We use the multiset $\mathcal A$ in which $n$ occurs $a_X(n)$ times, with total
mass $N(X)$.  Note that in the notation of  Greaves~\cite[Section~1.3.1, Equation~(1.1)]{Greaves01},
the local function is $\rho(p)=p\beta_p$.
Let $r$ be an integer satisfying
\begin{equation}
\label{eq:almost-prime-sieve-criterion}
r>\frac1\vartheta+\frac{\log4}{\log3}-1.
\end{equation}
Take $3<s<4$ and $1<u<s$, with $s$ close to $4$ and $u$ close to $1$,
and put
\[
z_s=D^{1/s},\qquad \eta=r+1-\frac{u}{\vartheta},
\qquad P(v)=\prod_{p<v}p.
\]
We can assume that the parameters are chosen so that
\[
\eta > 1.
\]
Using the logarithmic weights as in~\cite[Section~5.1.1, Equation~(1.2)]{Greaves01},
for $\gcd(n,P(z_s))=1$ , we define the weight
\[
m(n)=\eta-
\sum_{\substack{p\mid n\\z_s\leq p<D^{1/u}}}
\left(1-\frac{u\log p}{\log D}\right).
\]
Since $n\leq Y=D^{1/\vartheta}$, we have 
\[
m(n)\leq\eta-\omega(n)+\frac{u\log n}{\log D}
\leq r+1-\omega(n).
\]
Thus a positive weight requires $\omega(n)\leq r$.
Applying~\cite[Section~5.1.1, Lemma~2]{Greaves01}
with 
\[
W(t)=
\begin{cases}
\eta-1+ut,&1/s\le t<1/u,\\
\eta,&1/u\le t\le1,
\end{cases}
\]
gives a lower bound for
\[
    \sum_{\substack{n \in \mathcal A\\ \gcd(n, P(z_s))=1}}m(n) = \sum_{\substack{n\leq Y\\\gcd(n,P(z_s))=1}}a_X(n)m(n).
\]
Indeed, using~\eqref{eq:almost-prime-level} to make the negative term in the lower bound of 
~\cite[Section~5.1.1, Lemma~2]{Greaves01} negligible under $0<\vartheta<1/8$,   we derive
\[
\sum_{\substack{n\leq Y\\\gcd(n,P(z_s))=1}}a_X(n)m(n)
\geq N(X)V(z_s)\(M(s,u)+o(1)\),
\]
where $M(s,u)$ is defined by~\cite[Section~5.1.1, Equation~(1.5)]{Greaves01} (as $M(W)$ for the 
above weight $W$). 
The dimension-one formulas for the linear sieve functions
in~\cite[Section~4.2.4, Equation~(4.16)]{Greaves01} give
\[
\lim_{\substack{s\to4^-\\u\to1^+}}M(s,u)
=\frac{e^{\gamma_{\mathrm{EM}}}}{2}
\left(\left(r+1-\frac1\vartheta\right)\log3-\log4\right)>0,
\]
where $\gamma_{\mathrm{EM}}= 0.5772\ldots$ is the Euler–Mascheroni constant and the positivity follows
from~\eqref{eq:almost-prime-sieve-criterion}.  We can therefore fix
$s$ and $u$ so that 
\[
\sum_{\substack{n\leq Y\\\gcd(n,P(z_s))=1}}a_X(n)m(n) \gg N(X)/\log Y.
\]

We recall that  $m(n) \le r+1$ and $m(n) \le 0$ if $\omega(n) \ge r+1$. 
So,
\begin{equation}
\label{eq:almost-prime-rough-count}
\#\{\gamma\in\Gamma_X:~\omega(\Rform(\gamma))\leq r\} \ge
\sum_{\substack{n\leq Y\\     \omega(n)\leq r\\  \gcd(n,P(z_s))=1}} a_X(n)
\gg\frac{N(X)}{\log Y}.
\end{equation}
We now take $\vartheta=3/25$ and $r=9$, for which~\eqref{eq:almost-prime-sieve-criterion} 
and thus~\eqref{eq:almost-prime-rough-count} hold.

It remains to remove repeated prime factors.  Set $z=Y^{\vartheta/4}$.
Every nonsquarefree value counted in the middle sum
in~\eqref{eq:almost-prime-rough-count} is divisible by $p^2$ for
some prime $p>z$.  By Lemma~\ref{lem:large-square-divisor-tail},
\[
\sum_{p>z} A_{p^2}(X)
\leq\sum_{d>z} A_{d^2}(X)
\leq z^{-1}X^{2+o(1)}
=X^{2-\vartheta/2+o(1)},
\]
which is negligible compared to the lower bound in~\eqref{eq:almost-prime-rough-count}.
Since $\log Y\asymp\log X$,
this concludes the proof.

\section{Comments}

One of the reasons for investigating $A_{m}(X)$ for arbitrary integers $m \ge 1$ (rather than only for squares of squarefree integers) is that such results on the level of distribution of $\Rform(\gamma)$ may have other applications. For example, another application is the study of smooth values of $\Rform(\gamma)$ for $\gamma\in \Gamma_X$. A natural question is whether one can take advantage of averaging over $m$ and get a higher level of distribution of $A_{m}(X)$ on average over $m \le M$.

\section*{Acknowledgements}

The authors are very grateful to Alisa Sedunova and Andreas Str{\"o}m\-bergsson for sending them  preliminary versions of~\cite{Sed} and~\cite{StrSoVi26}, respectively.

During the preparation of this work, I.S. was supported by the Australian Research Council
Grants DP230100530 and DP230100534 and also by 
the Max Planck Institute for Mathematics in Bonn, 
 where parts of this work have been carried out.  Y.X. acknowledges financial support from the China Scholarship Council and the National Natural Science Foundation of China (Grant Nos. 12171311 and 12671012). Y.X. also acknowledges the support and hospitality of the School of Mathematics and Statistics of the University of New South Wales through its PhD Support Scheme.


\begin{thebibliography}{99}



\bibitem{DiSh} 
F. Diamond and J. Shurman,  \emph{A first course in modular forms}, 
Grad. Texts in Math., vol.~228, Springer, 2005.

\bibitem{Folland99}
G. B. Folland,
\emph{Real Analysis: Modern Techniques and Their Applications},
2nd ed., John Wiley \& Sons, New York, 1999.

\bibitem{FrIw09}
J. B. Friedlander and H. Iwaniec,
Hyperbolic prime number theorem,
\emph{Acta Math.} \textbf{202} (2009), 1--19.

\bibitem{Greaves01}
G. Greaves,
\emph{Sieves in number theory},
Modern Surveys
in Math., vol.~43,
Springer-Verlag, Berlin, 2001.

\bibitem{HeathBrown96}
D. R. Heath-Brown,
A new form of the circle method, and its application to quadratic forms,
\emph{J. Reine Angew. Math.} \textbf{481} (1996), 149--206.

\bibitem{IwKow04}
H. Iwaniec and E. Kowalski,
\emph{Analytic number theory},
Amer.  Math.  Soc. Colloq. Publ., vol.~53,
Amer.  Math.  Soc., Providence, RI, 2004.

\bibitem{Krieg94}
A. Krieg,
Counting modular matrices with specified maximum norm,
\emph{Linear Algebra Appl.} \textbf{196} (1994), 273--278.

\bibitem{LN}
R. Lidl and H. Niederreiter,
\emph{Finite fields},
 Encycl. Math. and its Appls., vol.~20,
Cambridge Univ. Press, Cambridge, 1997.

\bibitem{MarmonVishe19}
O. Marmon and P. Vishe,
On the Hasse principle for quartic hypersurfaces,
\emph{Duke Math. J.} \textbf{168} (2019), 2727--2799.

\bibitem{Sed}
A. Sedunova,
On the  hyperbolic prime number theorem,
\emph{Preprint}, 2026.

\bibitem{StrSoVi26}
A. Str\"ombergsson, A. S\"odergren and P. Vishe,
Effective equidistribution of unipotent orbits in homogeneous spaces of
$\mathrm{SL}(2,\R)\ltimes(\R^2)^k$,
\emph{Preprint}, 2026 (available at \url{https://arxiv.org/abs/2604.08753}).



\end{thebibliography}
\end{document}